\newtheorem{thm}{Theorem}[section]
\newtheorem{cor}[thm]{Corollary}
\newtheorem{lem}[thm]{Lemma}
\newtheorem{prop}[thm]{Proposition}
\theoremstyle{definition}
\newtheorem{defn}[thm]{Definition}
\theoremstyle{definition}
\newtheorem{ex}[thm]{Example}
\theoremstyle{remark}
\newtheorem{rem}[thm]{Remark}
\journal{Journal of Pure and Applied Algebra}
\newcommand{\vvar}[1]{\overrightarrow{#1}}
\newcommand{\param}[1]{\hat{#1}}
\newcommand{\y}{\mathit{y}}
\newcommand{\op}{\mathrm{op}}
\newcommand{\tensor}{\bullet}
\newcommand{\sfun}[1]{\mathcal{F}_{#1}}
\newcommand{\Sfun}[1]{\mathcal{F}'_{#1}}
\newcommand{\To}{\rightarrow}
\newcommand{\Def}{:=}
\newcommand{\eq}{\equiv}
\newcommand{\e}[2]{{#1} \Rightarrow {#2}}
\newcommand{\Set}{\mathbf{Set}}
\newcommand{\Clone}{\mathbf{Clone}}
\newcommand{\N}{\mathbb{N}}
\newcommand{\ff}{\mathbb{F}}
\newcommand{\F}{\mathbb{L}}
\newcommand{\M}{\mathbb{M}} 			
\newcommand{\E}{\mathcal{E}}			
\newcommand{\Law}{\mathcal{L}}
\newcommand{\Mlaw}{\mathcal{M}}
\newcommand{\IL}{\textswab{E}}			
\newcommand{\trans}{\tauup}
\newcommand{\tnat}{\nu}
\newcommand{\tnatbar}{\bar{\nu}}
\newcommand{\nat}{\muup}
\newcommand{\natbar}{\bar{\muup}}
\newcommand{\term}{\mathrm{t}}
\newcommand{\SOAT}{\mathbf{SOAT}}
\newcommand{\SOEP}{\mathbf{SOEP}}
\newcommand{\LAW}{\mathbf{LAW}}		
\newcommand{\Mod}{\mathbf{Mod}}
\newcommand{\FMod}{\mathbf{FMod}}
\newcommand{\alg}[2]{\llbracket {#1} \rrbracket_{#2}}
\newcommand{\Alg}[1]{{#1}\mbox{-}\mathrm{\mathbf{Alg}}}
\newcommand{\FMOD}[1]{\mathbf{\mathbbm{Mod}}({#1})}
\newcommand{\cat}{\mathscr}
\newcommand{\mon}{\mathrm{\mathbf{T}}}
\newcommand{\emptycon}{-}
\newcommand{\tuple}[2]{( #1_1, \dots, #1_{#2} )}
\newcommand{\etuple}[3]{( #1_1 + #3, \dots, #1_{#2}+#3 )}
\newcommand{\card}[1]{\| #1 \|}
\newcommand{\vcon}[2]{#1_1, \dots, #1_{#2}} 	
\newcommand{\ivcon}[3]{#1_1^{(#3)}, \dots, #1_{#2_{#3}}^{(#3)}}
\newcommand{\mcon}[2]{\textsc{#1}_1 \colon [#1_1], \dots, \textsc{#1}_{#2} \colon [#1_{#2}]}	
\newcommand{\emcon}[3]{\textsc{#1}_1 \colon [#1_1 + #3], \dots, \textsc{#1}_{#2} \colon [#1_{#2} + #3]}	
\newcommand{\mvar}[1]{\textsc{#1}}
\newcommand{\Sep}{\triangleright}
\newcommand{\ev}{\mathbbm{e}}		
\newcommand{\lam}{\mathbbm{l}}	
\newcommand{\squad}{\hspace{4pt}}
\begin{document}

\begin{frontmatter}



\title{Functorial Semantics of Second-Order Algebraic Theories}

\author{Marcelo Fiore\footnote{Computer Laboratory, University of Cambridge} and Ola Mahmoud\footnote{Faculty of Mathematics and Statistics, University of St. Gallen}}

\begin{abstract}
The purpose of this work is to complete the algebraic foundations of second-order languages from the viewpoint of categorical algebra as developed by Lawvere. To this end, this paper introduces the notion of second-order algebraic theory and develops its basic theory.  A crucial role in the definition is played by the second-order theory of equality $\M$, representing the most elementary operators and equations present in every second-order language. The category $\M$ can be described abstractly via the universal property of being the free cartesian category on an exponentiable object. Thereby, in the tradition of categorical algebra, a second-order algebraic theory consists of a cartesian category $\Mlaw$ and a strict cartesian identity-on-objects functor $\M \to \Mlaw$ that preserves the universal exponentiable object of $\Mlaw$. Lawvere's functorial semantics for algebraic theories can then be generalised to the second-order setting. To verify the correctness of our theory, two categorical equivalences are established: at the syntactic level, that of second-order equational presentations and second-order algebraic theories; at the semantic level, that of second-order algebras and second-order functorial models.
\end{abstract}

\begin{keyword}
Categorical algebra, algebraic theories, second-order languages, variable-binding, Lawvere theories, functorial semantics, exponentiable objects


\end{keyword}

\end{frontmatter}



\section{Introduction}
\label{introduction}

Algebra is the study of operations on mathematical structures, and the constructions and relationships arising from them. These structures span the most basic algebraic entities, such as arithmetic, to the more abstract, such as groups, rings, lattices, etc. Based on these, Birkhoff \cite{Birkhoff1935} laid out the foundations of a general unifying theory, now known as universal algebra. His formalisation of the notion of algebra starts with the introduction of equational presentations. These constitute the syntactic foundations of the subject. Algebras are then the semantics, or model theory, and play a crucial role in establishing the logical foundations. Indeed, Birkhoff introduced equational logic as a sound and complete formal deductive system for reasoning about algebraic structure.

The investigation of algebraic structure was further enriched by Lawvere's fundamental work on algebraic theories \cite{Lawvere2004}. His approach gives an elegant categorical framework for providing a presentation-independent treatment of universal algebra, and it embodies the motivation for the present work. 

As per Lawvere's own philosophy, we believe in the inevitability of algebraic content in mathematical subjects. We contend that it is only by looking at algebraic structure from all perspectives $-$ syntactic, semantic, categorical $-$ and the ways in which they interact, that the subject is properly understood. In the context of logic, algebra and theoretical computing, for instance, consider that: (i) initial-algebra semantics provides canonical compositional interpretations \cite{GoguenThatcherWagner1978}; (ii) free constructions amount to abstract syntax \cite{Mccarthy1963} that is amenable to proofs by structural induction and definitions by structural recursion \cite{Burstall1969}; (iii) equational presentations can be regarded as bidirectional rewriting theories and studied from a computational point of view \cite{KnuthBendix1970}; (iv) algebraic theories come with an associated notion of algebraic translation \cite{Lawvere2004}, whose syntactic counterpart provides the right notion of syntactic translation between equational presentations \cite{Fujiwara1959, Fujiwara1960}; (v) strong monads have an associated metalogic from which equational logics can be synthesised \cite{FioreHur2008b, FioreHur2010}. 

The realm of categorical universal algebra has so far been restricted to first-order languages. We further extend it to include languages with variable-binding, such as the $\lambda$-calculus \cite{Aczel1978} and predicate logic \cite{Aczel1980}. Emulating Lawvere's framework will enable us to:
\begin{itemize}
\item[-] define second-order algebraic theories to be structure preserving functors from a suitable base category, the second-order theory of equality, to a category which abstractly \emph{classifies} a given second-order presentation,
\item[-] extract syntactic information via internal languages from the categorical framework of second-order algebraic theories,
\item[-] synthesise a notion of syntactic translation from the canonical notion of morphism of algebraic theories, and vice versa,
\item[-] establish the functoriality of second-order semantics;
\end{itemize}
all in such a way that the expected categorical equivalences are respected. More precisely, we obtain:
\begin{enumerate}
\item the Second-Order \emph{Syntactic} Categorical Algebraic Theory Correspondence, by which second-order algebraic theories and their morphisms correspond to second-order equational presentations and syntactic translations; and
\item the Second-Order \emph{Semantic} Categorical Algebraic Theory Correspondence, by which algebras for second-order equational presentations correspond to second-order functorial models.
\end{enumerate}

\section{First-Order Algebraic Theories}
\label{backg}

\label{lawveretheories}

Lawvere's seminal thesis on algebraic theories \cite{Lawvere2004} develops a presentation--independent category-theoretic formulation of finitary first-order theories; \emph{finitary} in the sense that only operations of arity given by a finite cardinal are considered, and \emph{first-order} in that the arguments of the operations do not allow variable-binding. 
Under his abstraction, an algebraic theory is a functor from a \emph{base category} to a small category with strict finite products, whose morphishms can be thought of as tuples of derived operations. The base category represents the most fundamental equational theory, the theory of equality. It arises from the universal property of the categorical cartesian product. 
We review Lawvere's categorical approach to universal algebra and its syntactic counterpart given by mono-sorted equational presentations.  \\

\textbf{The first-order theory of equality.} Let $\ff$ be the category of finite cardinals and all functions between them. The objects of $\ff$ are denoted by $n \in \N$; it comes equipped with a cocartesian structure given via cardinal sum $m+n$. $\ff$ can be universally characterised as the free cocartesian category generated by the object 1. By duality, the opposite of $\ff$, which we shall denote by $\F$ for $\F$awvere, is equipped with finite products. This category, together with a suitable cartesian functor, form the main constituents of a Lawvere theory.

\begin{defn}[Lawvere theory]
A \emph{Lawvere theory} consists of a small category $\Law$ with strictly associative finite products, together with a strict cartesian identity-on-objects functor $L \colon \F \To \Law$. A morphism of Lawvere theories $L \colon \F \To \Law$ and $L' \colon \F \To \Law '$ is a cartesian functor $F \colon \Law \To \Law '$ which commutes with the theory functors $L$ and $L'$. We write $\LAW$ for the category of Lawvere theories and their morphisms.
\end{defn}

For a Lawvere theory $L \colon \F \To \Law$, the objects of $\Law$ are then precisely those of $\F$. For any $n \in \N$, morphisms in $\Law(n,1)$ are referred to as the \emph{operators} of the theory, and those arising from $\F(n,1)$ as the \emph{elementary} such operators. For any $n,m \in \N$, morphisms in $\Law(n,m)$ are $m$-tuples of operators, because $\Law(n,m) \cong \Law(n,1)^m$. Intuitively, a morphism of Lawvere theories encapsulates the idea of interpreting one theory in another.

\begin{defn}[Functorial models]
A \emph{functorial model} of a Lawvere theory $L \colon \F \To \Law$ in a cartesian category $\cat{C}$ is a cartesian functor $\Law \To \cat{C}$. 
\end{defn}

\textbf{First-order equational presentations}
are the syntactic counterpart of Lawvere theories. 
An equational presentation consists of a signature defining its operations and a set of axioms describing the equations it should obey. Formally, a \emph{mono-sorted first-order equational presentation} is specified as $\E = (\Sigma, E)$, where $\Sigma = \{ \Sigma_n \}_{n \in \N}$ is an indexed family of first-order operators. For a given $n \in \N$, we say that an operator $\omega \in \Sigma_n$ has \emph{arity} $n$. The set of terms $T_\Sigma(V)$ on a set of variables $V$ generated by the signature $\Sigma$ is built up by the grammar
$$ t \in T_\Sigma(V) \squad \Def \squad v \quad | \quad \omega(t_1, \dots, t_k) \quad , $$
where $v \in V$, $\omega \in \Sigma_k$, and for $i =1,\dots,k$, $t_i \in T_\Sigma(V)$. An equation is simply given by a pair of terms, and the set $E$ of the equational presentation $\E = (\Sigma,E)$ contains equations, which we refer to as the axioms of $\E$.

\begin{defn}[First-order syntactic translations]
There are two constituents defining the notion of morphism of first-order equational presentations $\E = (\Sigma, E) \To \E ' = (\Sigma ', E')$. An operator $\omega$ of $\Sigma$ is mapped to a term $\Gamma \vdash t$ of $\Sigma '$, with its context $\Gamma$ given by the arity of $\omega$. This induces a mapping between the terms of $\Sigma$ and $\Sigma '$ in such a way that the axioms of $E$ are respected. Equational presentations are their syntactic presentations form a category, denoted by $\textbf{FOEP}$.
\end{defn}

Indeed, a syntactic morphism with these properties mirrors the behaviour of morphisms of first-order algebraic theories. Notions of mappings of signatures and presentations have been developed in the first-order setting by Fujiwara \cite{Fujiwara1959, Fujiwara1960}, Goguen et al. \cite{GoguenThatcherWagner1978}, and Vidal and Tur \cite{VidalTur2008}, all of which use the common definition that a syntactic morphism maps \emph{operators to terms}. \\

\textbf{Set-theoretic semantics.}
The model-theoretic universe of first-order languages is classically taken to be the category $\Set$. A (set-theoretic) algebra in this universe for a first-order signature $\Sigma$ is a pair $(X,\alg{-}{X})$ consisting of a set $X$ and interpretation functions $\alg{\omega}{X} \colon X^{|\omega|} \To X$, where $|\omega|$ denotes the arity of $\omega$. Algebras induce interpretations on terms (see for example \cite{FioreHur2008a} for details). An algebra for an equational presentation $\E = (\Sigma,E)$ is an algebra for $\Sigma$ which satisfies all equations in $E$, in the sense that an equal pair of terms induces equal interpretation functions in $\Set$.

\subsection{First-Order Categorical Algebraic Theory Correspondence} 

The passage from Lawvere theories and their functorial models to mono-sorted first-order equational presentations and their algebras is invertible, making Lawvere theories an abstract, presentation-independent formalisation of equational presentations. Any first-order equational presentation induces an algebraic theory, and, vice versa, any algebraic theory has an underlying equational presentation. Moreover, morphisms of Lawvere theories correspond to syntactic translations of equational presentations, which gives the following result.

\begin{thm}
The categories $\LAW$ and $\mathbf{FOEP}$ are equivalent.
\end{thm}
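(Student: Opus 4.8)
The plan is to construct a pair of functors $\LAW \To \FOEP$ and $\FOEP \To \LAW$ and exhibit natural isomorphisms witnessing that they are mutually quasi-inverse. I would establish this equivalence by setting up the two passages described informally in the text and then verifying that composing them in either direction returns a theory (resp.\ presentation) that is canonically isomorphic to the one we started with.

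First I would define the functor $\FOEP \To \LAW$ sending a presentation $\E = (\Sigma, E)$ to its \emph{classifying theory}. Concretely, one builds the category $\Law_\E$ whose objects are the natural numbers and whose hom-set $\Law_\E(n,1)$ consists of equivalence classes of terms $t \in T_\Sigma(\{x_1,\dots,x_n\})$ modulo the congruence generated by the axioms $E$ (under equational logic), with $\Law_\E(n,m) \cong \Law_\E(n,1)^m$ by tupling; composition is simultaneous substitution. The theory functor $L_\E \colon \F \To \Law_\E$ is identity-on-objects and sends an elementary operator (a function $n \to 1$ in $\ff$, i.e.\ a variable projection) to the corresponding variable term. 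On morphisms of $\FOEP$, a syntactic translation $\E \To \E'$ maps each operator of $\Sigma$ to a term over $\Sigma'$, and this extends by structural recursion to a map on all terms that respects the congruences, hence yields a cartesian functor $\Law_\E \To \Law_{\E'}$ commuting with the theory functors; I would check functoriality of this assignment.

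Next I would define the functor $\LAW \To \FOEP$ extracting the \emph{internal language} of a Lawvere theory $L \colon \F \To \Law$. Take the signature $\Sigma$ with $\Sigma_n \Def \Law(n,1)$, regarding every operator of the theory as a formal operation symbol of arity $n$, and let $E$ be the set of all equations between derived terms that hold in $\Law$. A morphism of Lawvere theories, being identity-on-objects and cartesian, sends operators to operators and thus determines a map of operators to terms preserving the axioms, i.e.\ a syntactic translation. The two remaining verifications are the round-trips: starting from $\E$, forming $\Law_\E$, and reading off its internal language returns a presentation provably equivalent to $\E$ (every operator becomes a unary-context term denoting itself, and the induced axioms are exactly the $E$-provable equations); starting from $\Law$, extracting $(\Sigma, E)$, and forming its classifying theory recovers $\Law$ up to isomorphism because each morphism of $\Law$ arises as the class of some term and two terms are identified precisely when the equation holds in $\Law$.

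I expect the main obstacle to be the faithful bookkeeping that makes the two round-trip natural isomorphisms genuinely natural and genuinely identity-on-objects compatible, rather than any single deep idea. In particular, one must be careful that the congruence generated on terms by $E$ via equational logic coincides, under the classifying construction, exactly with equality of morphisms in $\Law$ — this is essentially a soundness-and-completeness statement for equational logic phrased categorically, and it is where the strict cartesian structure and the universal property of $\F$ as the free cocartesian category on $1$ (dually, $\F = \ff^\op$ as free cartesian category) do the real work. Verifying that syntactic translations and morphisms of Lawvere theories correspond bijectively, and that this bijection is natural, then reduces to checking that both are determined by their action on operators; I would treat this compatibility as the crux and handle the structural-recursion and well-definedness lemmas as routine support.
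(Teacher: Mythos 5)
Your proposal is correct and takes essentially the same approach as the paper: the paper states this classical first-order result without an explicit proof, but the classifying-theory/internal-language pair of functors with round-trip natural isomorphisms that you construct is precisely what the surrounding discussion sketches, and it is the exact first-order shadow of the argument the paper carries out in full for the second-order case (the classifying category $\M(\E)$, the internal language $\IL(M)$, and Theorems 5.5, 6.5 and 6.6). One minor notational slip: an elementary operator is a morphism in $\F(n,1) \cong \ff(1,n)$, i.e.\ a variable projection, not a function $n \to 1$ in $\ff$.
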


The semantic component of the Categorical Algebraic Theory Correspondence given by the equivalence between functorial models for first-order algebraic theories, algebras for first-order equational presentations, and Eilenberg-Moore algebras for finitary monads. We refer the reader to \cite{Borceux1994} for detailed proofs.

\begin{prop}
For every $S$-sorted first-order equational presentation $\E$, there exists a finitary monad $\mon$ on $\Set^S$ such that the category of $\E$-algebras is isomorphic to that of Eilenberg-Moore algebras for $\mon$. Also, for a set $S$ and every finitary monad $\mon$ on $\Set^S$, there exists a first-order algebraic theory $L \colon \F_S \To \Law$ such that the category of Eilenberg-Moore algebras for $\mon$ is isomorphic to the category of functorial models $\FMod(L, \Set^S)$.
\end{prop}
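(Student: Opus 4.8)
The plan is to prove the two assertions by threading the standard monad--theory--variety correspondence through the $S$-sorted setting, obtaining the chain of isomorphisms $\Alg{\E} \cong (\Set^S)^{\mon} \cong \FMod(L, \Set^S)$. For the first assertion I would exhibit the monad directly from the syntax and invoke monadicity; for the second I would reconstruct a Lawvere theory from the finitary monad as the opposite of its Kleisli category restricted to finitely generated free algebras.

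For the first assertion, I would begin with the forgetful functor $U \colon \Alg{\E} \To \Set^S$ and construct its left adjoint $F$, sending an $S$-sorted set to the free $\E$-algebra on it, realised as the $\Sigma$-terms over the generators quotiented by the congruence generated by the axioms $E$. Setting $\mon = UF$ yields a monad on $\Set^S$. The key point is that $\mon$ is finitary: since every operator of $\Sigma$ has arity a finite cardinal, each term mentions only finitely many variables, so the underlying endofunctor is a filtered colimit of finite-arity pieces and hence preserves filtered colimits. It then remains to identify $\Alg{\E}$ with the category of Eilenberg--Moore algebras for $\mon$, which I would do via Beck's monadicity theorem, checking that $U$ has a left adjoint, reflects isomorphisms, and that $\Alg{\E}$ has and $U$ preserves coequalizers of $U$-split pairs --- all of which hold because the structure of an $\E$-algebra is created from its underlying $S$-sorted set.

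For the second assertion, given a finitary monad $\mon$ on $\Set^S$, I would define the Lawvere theory $\Law$ to be the opposite of the full subcategory of the Kleisli category of $\mon$ spanned by the free algebras on the finite $S$-sorted sets, so that $\Law$ shares its objects with $\F_S$ and a hom-set records precisely the tuples of finitary $\mon$-operations of the appropriate arity. The finite products of $\Law$ arise from the finite coproducts of $\ff_S$, which the free functor preserves, and the theory functor $L \colon \F_S \To \Law$ is the opposite of the free-algebra functor restricted to $\ff_S$. To finish I would establish the isomorphism $(\Set^S)^{\mon} \cong \FMod(L, \Set^S)$: an Eilenberg--Moore algebra $(X, a)$ induces the product-preserving functor sending each arity to the corresponding power of $X$ and acting on operations through $a$, and conversely a functorial model $M$ yields a carrier, given by its values on the generating sorts, together with an action of all finitary operations.

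I expect the main obstacle to be the reverse half of this last correspondence, namely reconstructing a genuine $\mon$-algebra structure from a product-preserving functor $M \colon \Law \To \Set^S$. Since $\mon$ may act on arbitrary infinite sets while $M$ records only finitary operations, the reconstruction relies essentially on the finitariness of $\mon$: because $\ff_S$ is dense in $\Set^S$ and $\mon$ preserves filtered colimits, $\mon$ is the left Kan extension of its restriction to the finitely generated free algebras, so the finitary data carried by $M$ determines the full algebra structure uniquely and coherently. Verifying that this extension is well defined, functorial, and mutually inverse to the forward construction is the technical heart of the argument.
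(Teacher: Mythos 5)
Your proposal is correct and matches the paper's approach, insofar as the paper gives no proof of this proposition at all but simply defers to the cited reference (Borceux, 1994), where exactly your argument appears: the free-algebra adjunction with finitariness and monadicity for the first assertion, and the Kleisli-style reconstruction of the Lawvere theory together with the density/finitariness argument for the second. One small caution: since the statement asserts isomorphisms (not mere equivalences) of categories, the application of Beck's theorem should use the strict form (creation, not just preservation, of coequalizers of $U$-split pairs), as your closing remark that the $\E$-algebra structure is \emph{created} from the underlying $S$-sorted set already suggests.
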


\section{Second-Order Syntax and Semantics}
\label{so_syntax}

The passage from first to second order involves extending the language with both \emph{variable-binding operators} and \emph{parameterised metavariables}. Second-order operators bind a list of variables in each of their arguments, leading to syntax up to alpha equivalence \cite{Aczel1978}. On top of variables, second-order languages come equipped with parameterised metavariables. These are essentially second-order variables for which substitution also involves instantiation. Variable-binding constructs are at the core of fundamental calculi and theories in computer science and logic \cite{Church1936, Church1940}. Examples of second-order languages include the $\lambda$-calculus~\cite{Aczel1978}, the fixpoint operator \cite{Klop1993}, the primitive recursion operator \cite{Aczel1978}, the universal and existential quantifiers of predicate logic \cite{Aczel1980}, and the list iterator~\cite{Raamsdonk2003}.

Over the past two decades, many formal frameworks for languages with binding have been developed, including higher-order abstract syntax \cite{PfenningElliott1988} and Gabbay and Pitts' set-theoretic abstract syntax \cite{GabbayPitts2001}. We review the second-order framework of Fiore et al. \cite{FiorePlotkinTuri1999}, as developed further by Hamana \cite{Hamana2005}, Fiore \cite{Fiore2008}, and Fiore and Hur \cite{FioreHur2010}. 

\subsection{Second-order signatures}

Following the development of Aczel \cite{Aczel1978}, a (mono-sorted) \emph{second-order signature} $\Sigma = (\Omega, |-|)$ is specified by a set of operators $\Omega$ and an arity function $|-| \colon \Omega \To \N^*$. For an operator $\omega \in \Omega$, we write $\omega \colon \tuple{n}{k}$ whenever it has arity $| \omega | = \tuple{n}{k}$. The intended meaning here is that the operator $\omega$ takes $k$ arguments binding $n_i$ variables in the $i^{\mathrm{th}}$ argument. 

Any language with variable binding fits this formalism, including languages with quantifiers \cite{Aczel1980}, a fixpoint operator \cite{Klop1993}, and the primitive recursion operator \cite{Aczel1978}. The most prototypical of all second-order languages is the $\lambda$-calculus.

\begin{ex}
The second-order signature $\Sigma_\lambda$ of the mono-sorted \emph{$\lambda$-calculus} has operators $\tt{abs} \colon (1)$ and $\tt{app} \colon (0,0)$ representing $\lambda$ abstraction and application, respectively.
\end{ex}

\subsection{Second-order terms}

Second-order terms have \emph{metavariables} on top of variables as building blocks. We use the notational convention of denoting variables similar to first-order variables by $x,y,z$, and metavariables by $\mvar{m}, \mvar{n}, \mvar{l}$. Metavariables come with an associated natural number arity, also referred to as its \emph{meta-arity}. A metavariable $\mvar{m}$ of meta-arity $m$, denoted by $\mvar{m} \colon [m]$, is to be parameterised by $m$ terms. 

Second-order terms are considered in contexts with two \emph{zones}, each respectively declaring metavariables and variables. We use the following representation for contexts
$\mcon{m}{k} \Sep \vcon{x}{n}$
where the metavariables $\mvar{m}_i$ and variables $x_j$ are assumed to be distinct. 

Terms are built up by means of operators
from both variables and metavariables, and hence referred to as second-order.
The judgement for \emph{second-order terms} in context
$ \Theta \Sep \Gamma \vdash t $
is defined similar to the second-order syntax of Aczel \cite{Aczel1978} by the following rules.
\begin{itemize}
\setlength\itemindent{50pt}
\item[(Variables)] For $x \in \Gamma$,
$$ \frac{}{\Theta \Sep \Gamma \vdash x} $$
\item[(Metavariables)] For $(\mvar{m} \colon [m]) \in \Theta$,
$$ \frac{\Theta \Sep \Gamma \vdash t_i \quad (1 \leq i \leq m)}{\Theta \Sep \Gamma \vdash \mvar{m}[t_1, \dots, t_m]} $$
\item[(Operators)] For $\omega \colon \tuple{n}{k}$,
$$ \frac{\Theta \Sep \Gamma, \vvar{x}_i \vdash t_i \quad (1 \leq i \leq k)}{\Theta \Sep \Gamma \vdash \omega \big( (\vvar{x}_1)t_1, \dots, (\vvar{x}_k)t_k \big)} $$
where $\vvar{x}_i$ stands for $\ivcon{x}{n}{i}$. 
\end{itemize}

Terms derived according to the first two rules only via variables and metavariables are referred to as \emph{elementary}. Hence, an empty signature with an empty set of operators generates only elementary terms. 

Terms are considered up to the $\alpha$-equivalence relation induced by stipulating
that, for every operator $\omega \colon \tuple{n}{k}$, the variables $\vvar{x}_i$ in the term $\omega \big( (\vvar{x}_1)t_1, \dots, (\vvar{x}_k)t_k \big)$ are bound in $t_i$.

\begin{ex}
Two sample terms for the signature $\Sigma_\lambda$ of the mono-sorted $\lambda$-calculus are $\mvar{m} \colon [1], \mvar{n} \colon [0] \Sep \emptycon \vdash \tt{app} \big(\tt{abs} \big( (x) \mvar{m} [x] \big), \mvar{n}[] \big)$ and
$\mvar{m} \colon [1], \mvar{n} \colon [0] \Sep \emptycon \vdash \mvar{m}[\mvar{n}[]]$.
\end{ex}

\subsection{Second-order substitution calculus}

The second-order nature of the syntax requires a two-level substitution calculus. Each level respectively accounts for the substitution of variables and metavariables, with the latter operation depending on the former \cite{Aczel1978, Klop1993, Raamsdonk2003, Fiore2008}.

\begin{defn}[Substitution]
The operation of capture-avoiding simultaneous \emph{substitution} of terms for variables maps
$ \Theta \Sep \vcon{x}{n} \vdash t$ and $\Theta \Sep \Gamma \vdash t_i \quad (1 \leq i \leq n) $
to 
$ \Theta \Sep \Gamma \vdash t \big\{ x_i \Def t_i \big\}_{i \in \card{n}} $
according to the following inductive definition:
\begin{itemize}
\item[-] $x_j \big\{ x_i \Def t_i \big\}_{i \in \card{n}} = t_j$
\item[-] $\big( \mvar{m} [\dots,s,\dots] \big) \big\{ x_i \Def t_i \big\}_{i \in \card{n}} = \mvar{m} \big[ \dots, s\big\{ x_i \Def t_i \big\}_{i \in \card{n}}, \dots \big] $
\item[-] $\big( \omega ( \dots, (\vcon{y}{k})s ,\dots ) \big) \big\{ x_i \Def t_i \big\}_{i \in \card{n}} \squad = \squad \omega \big( \dots, (\vcon{y}{k}) s \big\{ x_i \Def t_i, y_j \Def z_j \big\}_{i \in \card{n}, j \in \card{k}} , \dots \big)$ 
with $z_j \notin \mathrm{dom}(\Gamma)$ for all $j \in \card{k}$. 
\end{itemize}
\end{defn}

\begin{defn}[Metasubstitution] 
The operation of \emph{metasubstitution} of abstracted terms for metavariables maps
$ \mcon{m}{k} \Sep \Gamma \vdash t $ and $ \Theta \Sep \Gamma , \vvar{x}_i \vdash t_i \quad (1 \leq i \leq k) $
to 
$\Theta \Sep \Gamma \vdash t \big\{ \mvar{m}_i \Def (\vvar{x}_i) t_i \big\}_{i \in \card{k}} $
according to the following inductive definition:
\begin{itemize}
\item[-] $x \big\{ \mvar{m}_i \Def (\vvar{x}_i) t_i \big\}_{i \in \card{k}} = x$
\item[-] $\big( \mvar{m}_l [s_1, \dots, s_{m_l}] \big) \big\{ \mvar{m}_i \Def (\vvar{x}_i) t_i \big\}_{i \in \card{k}} = t_l \Big\{ x^{(i)}_j \Def s_j \big\{ \mvar{m}_i \Def (\vvar{x}_i) t_i \big\}_{i \in \card{k}} \Big\}_{j \in \card{m_l}} $
\item[-] $\big( \omega (\dots, (\vvar{x})s ,\dots) \big) \big\{ \mvar{m}_i \Def (\vvar{x}_i) t_i \big\}_{i \in \card{k}} = \omega \big( \dots, (\vvar{x})s \big\{ \mvar{m}_i \Def (\vvar{x}_i) t_i \big\}_{i \in \card{k}} ,\dots \big)$ 
\end{itemize}
\end{defn}

The operation of metasubstitution is well-behaved, in the sense that it is compatible with substitution (\emph{Substitution-Metasubstitution Lemma}) and monoidal, meaning that it is associative (\emph{Metasubstitution Lemma I}) and has a unit (\emph{Metasubstitution Lemma II}). Formulations of these Lemmas are given in Appendix A, and a detailed proof can be found in \cite{Mahmoud2011}.

\subsection{Parameterisation}

Every second-order term $\Theta \Sep \Gamma \vdash t$ can be \emph{parameterised} to yield a term $ \Theta, \param{\Gamma} \Sep \emptycon \vdash \param{t} $, where for $\Gamma = \vcon{x}{n}$,
$ \param{\Gamma} = \mvar{x}_1 \colon [0], \dots, \mvar{x}_n \colon [0]$ and $ \param{t} = t \big\{ x_i \Def \mvar{x}_i [] \big\}_{i \in \card{n}}$. The variable context is thus replaced under parameterisation by a metavariable context, yielding an essentially equivalent term (formally \emph{parameterised} term) where all its variables are replaced by metavariables, which do not themselves parameterise any terms. This allows us to intuively think of metavariables of zero meta-arity as variables, and vice versa.

\subsection{Second-Order Equational Logic}

A second-order \emph{equation} is given by a pair of second-order terms $\Theta \Sep \Gamma \vdash s$ and $\Theta \Sep \Gamma \vdash t$ in context, written as
$ \Theta \Sep \Gamma \vdash s \eq t $.
A second-order \emph{equational presentation} $\E = (\Sigma, E)$ is specified by a second-order signature $\Sigma$ together with a set of equations $E$, the \emph{axioms} of the presentation $\E$, over it. Axioms are usually denoted by
$ \Theta \Sep \Gamma \vdash_E t \eq s $
to distinguish them from any other equations.

\begin{ex}
The equational presentation $\E_\lambda = (\Sigma_\lambda, E_\lambda)$ of the mono-sorted $\lambda$-calculus extends the second-order signature $\Sigma_\lambda$ with the following axioms.
\begin{itemize}
\item[] \hspace{10pt} $(\beta) \quad \mvar{m} \colon [1], \mvar{n} \colon [0] \Sep \emptycon \vdash_{E_\lambda} \tt{app} \big( \tt{abs} \big( (x) \mvar{m} [x] \big), \mvar{n}[] \big) \squad \eq \squad \mvar{m} \big[ \mvar{n}[] \big] $ 
\item[] \hspace{10pt} $(\eta) \quad  \mvar{f} \colon [0] \Sep \emptycon \vdash_{E_\lambda} \tt{abs} \big( (x) \tt{app} (\mvar{f}[],x) \big) \squad \eq \squad \mvar{f}[] $ 
\end{itemize}
\end{ex}

It is worth emphasising that the (mono-sorted) $\lambda$-calculus is merely taken as a running example throughout this work, for it is the most intuitive and widely-known such calculus. The expressiveness of the second-order formalism does not, however, rely exclusively on that of the $\lambda$-calculus. One can directly axiomatise, say,  primitive recursion \cite{Aczel1978} and predicate logic \cite{Plotkin1998} as second-order equational presentations.

The rules of \emph{Second-Order Equational Logic} are given in Figure 1. Besides the rules for axioms and equivalence, the logic consists of just one additional rule stating that the operation of metasubstitution in extended metavariable context is a congruence. 
The expressive power of this system can be seen through the following two sample derivable rules.\\

(Substitution)
$$ \frac{\Theta \Sep x_1, \dots, x_n \vdash s \eq t \qquad \Theta \Sep \Gamma \vdash s_i \eq t_i \quad (1 \leq i \leq n)}{\Theta \Sep \Gamma \vdash s \{ x_i \Def s_i \}_{i \in \card{n}} \eq t \{ x_i \Def t_i \}_{i \in \card{n}}} $$

(Extension)
$$ \frac{\mcon{m}{k} \Sep \Gamma \vdash s \eq t}{\mvar{m}_1 \colon [m_1+n], \dots, \mvar{m}_k \colon [m_k+n] \Sep \Gamma, \vcon{x}{n} \vdash s^\# \eq t^\#}  $$

\vspace{7pt}
where $u^\# = u \{ \mvar{m}_i \Def \tuple{x}{n} \mvar{m}_i[y^{(i)}_1, \dots, y^{(i)}_{m_i},\vcon{x}{n}] \}_{i \in \card{k}}$.\\

\begin{figure}[t]
(Axioms)
$$ \frac{\Theta \Sep \Gamma \vdash_E s \eq t}{\Theta \Sep \Gamma \vdash s \eq t} $$

(Equivalence)
$$ \frac{\Theta \Sep \Gamma \vdash t}{\Theta \Sep \Gamma \vdash t \eq t} \qquad \frac{\Theta \Sep \Gamma \vdash s \eq t}{\Theta \Sep \Gamma \vdash t \eq s} \qquad \frac{\Theta \Sep \Gamma \vdash s \eq t \qquad \Theta \Sep \Gamma \vdash t \eq u}{\Theta \Sep \Gamma \vdash s \eq u} $$

(Extended metasubstitution)
$$ \frac{\mcon{m}{k} \Sep \Gamma \vdash s \eq t \qquad \Theta \Sep \Delta, \vvar{x}_i \vdash s_i \eq t_i \quad (1 \leq i \leq k)}{\Theta \Sep \Gamma, \Delta \vdash s \big\{ \mvar{m}_i \Def (\vvar{x}_i)s_i \big\}_{i \in \card{k}} \eq t \big\{ \mvar{m}_i \Def (\vvar{x}_i)t_i \big\}_{i \in \card{k}}} $$
\caption{Second-Order Equational Logic}
\vspace{5pt}
\rule[1ex]{\textwidth}{0.5pt}
\end{figure}

Performing the operation of parameterisation on a set of equations $E$ to obtain a set of \emph{parameterised equations} $\param{E}$, we have that all of the following are equivalent:
$$ \Theta \Sep \Gamma \vdash_E s \eq t \quad , \qquad \Theta , \param{\Gamma} \Sep \emptycon \vdash_\E \param{s} \eq \param{t} $$
$$ \Theta \Sep \Gamma \vdash_{\param{E}} s \eq t \quad, \qquad \Theta , \param{\Gamma} \Sep \emptycon \vdash_{\param{E}} \param{s} \eq \param{t} $$
Hence, without loss of generality, any set of axioms can be transformed into a parameterised set of axioms, which in essence represents the same equational presentation. One may restrict to axioms containing empty variable contexts as in the \emph{CRS}s of Klop \cite{Klop1980}, but there is no reason for us to do the same.

\subsection{Second-Order Universal Algebra}

The model theory of Fiore and Hur \cite{FioreHur2010} for second-order equational presentations is recalled. For our purposes, this is presented here in elementary concrete model-theoretic terms rather than in abstract monadic terms. The reader is referred to \cite{FioreHur2010} for the latter perspective. \\

\textbf{Semantic universe.}
Recall that we write $\ff$ for the free cocartesian category on an object. Explicitly, $\ff$ has $\N$ as set of objects and morphisms $m \To n$ given by functions $\card{m} \To \card{n}$. 
The second-order model-theoretic development lies within the semantic universe $\Set^\ff$, the presheaf category of sets in variable contexts \cite{FiorePlotkinTuri1999}. It is a well-known category, and the formalisation of second-order model theory relies on some of its intrinsic properties. In particular, $\Set^\ff$ is bicomplete with limits and colimits computed pointwise \cite{MaclaneMoerdijk1992}. 
We write $\y$ for the Yoneda embedding $ \ff^\op \hookrightarrow \Set^\ff$. \\

\textbf{Substitution.} We recall the \emph{substitution monoidal structure} in the semantic universe $\Set^\ff$ as presented in \cite{FiorePlotkinTuri1999}. The unit is given by the \emph{presheaf of variables} $\y 1$, explicitly the embedding $\ff \hookrightarrow \Set$. This object is a crucial element of the semantic universe $\Set^\ff$, as it provides an arity for variable binding. The monoidal tensor product $X \tensor Y$ of presheaves $X,Y \in \Set^\ff$ is given by
$$ X \tensor Y = \int^{k \in \ff} X(k) \times Y^k \quad . $$

A monoid \vspace{-0.6cm}
\begin{diagram}[small]
\y 1 & \rTo^{\nu} & A & \lTo^{\varsigma} & A \tensor A
\end{diagram}
for the substitution monoidal structure equips $A \in \Set^\ff$ with substitution structure. In particular, the map $\nu_k \colon \y k \To A^k$, defined as the composite \vspace{-0.6cm}
\begin{diagram}[small]
\y k \cong (\y 1)^k & \rTo^{\nu^k} & A^k \quad , 
\end{diagram}
induces the embedding
$ (A^{\y n} \times A^n)(k) \To A(k+n) \times A^k(k) \times A^n(k) \To (A \tensor A)(k) $,
which, together with the multiplication, yield a \emph{substitution operation}
$ \varsigma_n \colon A^{\y n} \times A^n \To A $
for every $n \in \N$. These substitution operations provide the interpretations of metavariables. \\

\textbf{Algebras.} Every second-order signature $\Sigma = (\Omega, |-|)$ induces a \emph{signature endofunctor} $\sfun{\Sigma} \colon \Set^\ff \To \Set^\ff$ given by
$$ \sfun{\Sigma}X \squad = \coprod_{\omega \colon \tuple{n}{k} \in \Omega} \quad \prod_{i \in \card{k}} X^{\y n_i} \quad . $$
$\sfun{\Sigma}$-algebras $\sfun{\Sigma}X \To X$ provide an interpretation
$$ \alg{\omega}{X} \colon \prod_{i \in \card{k}} X^{\y n_i} \To X $$
for every operator $\omega \colon \tuple{n}{k}$ in $\Sigma$. 
Note that there are canonical natural isomorphisms
\begin{eqnarray*}
\coprod_{i \in I} (X_i \tensor Y) & \cong & \big( \coprod_{i \in I} X_i \big) \tensor Y \\
 \prod_{i \in \card{n}} (X_i \tensor Y) & \cong & \big( \prod_{i \in \card{n}} X_i  \big) \tensor Y
\end{eqnarray*}
and, for all points $\eta \colon \y 1 \To Y$, natural extension maps
$$ \eta^{\# n} \colon X^{\y n} \tensor Y \To (X \tensor Y)^{\y n} \quad . $$
These constructions equip every signature endofunctor $\sfun{\Sigma}$ with a \emph{pointed strength}
$ \varpi_{X,\y 1 \To Y} \colon \sfun{\Sigma}(X) \tensor Y \To \sfun{\Sigma} (X \tensor Y)$.
This property plays a critical role in the notion of algebra with substitution structure, which depends on this pointed strength. The extra structure on a presheaf $Y$ in the form of a point $\varpi \colon \y 1 \To Y$ reflects the need of fresh variables in the definition of substitution for binding operators. We refer the reader to \cite{FiorePlotkinTuri1999} and \cite{Fiore2008} for a detailed development. \\

\textbf{Models.} A model for a second-order signature $\Sigma$ is an algebra equipped with a compatible substitution structure. Formally, $\Sigma$-\emph{models} are defined to be $\Sigma$-\emph{monoids}, which are objects $A \in \Set^\ff$ equipped with an $\sfun{\Sigma}$-algebra structure $\alpha \colon \sfun{\Sigma}A \To A$ and a monoid structure $\nu \colon \y 1 \To A$ and $\varsigma \colon A \tensor A \To A$ that are compatible in the sense that the following diagram commutes.
\begin{diagram}[small]
\sfun{\Sigma}(A) \tensor A & \rTo^{\varpi_{A,\nu}} & \sfun{\Sigma} (A \tensor A) & \rTo^{\sfun{\Sigma} \varsigma} & \sfun{\Sigma}(A) \\
\dTo^{\alpha \tensor A} & & & & \dTo_{\alpha} \\
A \tensor A & & \rTo^{\varsigma} & &A
\end{diagram}

We denote by $\Mod(\Sigma)$ the category of $\Sigma$-models, with morphisms given by maps that are both $\sfun{\Sigma}$-algebra and monoid homomorphisms. \\

\textbf{Soundness and completeness.}
We review the soundness and completeness of the model theory of \emph{Second-Order Equational Logic} as presented in \cite{FioreHur2010}. A model $A \in \Mod(\Sigma)$ for a second-order signature $\Sigma$ is explicitly given by, for a metavariable context $\Theta = (\mcon{m}{k})$ and variable context $\Gamma = (\vcon{x}{n})$, a presheaf
$\alg{\Theta \Sep \Gamma}{A} = \prod_{i \in \card{k}} A^{\y m_i} \times \y n $
of $\Set^\ff$, together with interpretation functions 
$\alg{\omega}{A} \colon \prod_{j \in \card{l}} A^{\y n_j} \To A $
for each operator $\omega \colon \tuple{n}{l}$ of $\Sigma$. This induces the interpretation of a second-order term $\Theta \Sep \Gamma \vdash t $ in $A$ as a morphism
$ \alg{\Theta \Sep \Gamma \vdash t}{A} \colon \alg{\Theta \Sep \Gamma}{A} \To A $
in $\Set^\ff$, which is given by structural induction as follows: 
\begin{itemize}
\item[-] $\alg{\Theta \Sep \Gamma \vdash x_i}{A}$ is the composite
\begin{diagram}[small]
\alg{\Theta \Sep \Gamma}{A} & \rTo^{\pi_2} & \y n & \rTo^{\nu_n} &A^n & \rTo^{\pi_j} & A \quad.
\end{diagram}
\item[-] $\alg{\Theta \Sep \Gamma \vdash \mvar{m}_i[t_1, \dots, t_{m_i}]}{A}$ is the composite
\begin{diagram}[small]
\alg{\Theta \Sep \Gamma}{A} & \rTo^{\langle \pi_i \pi_1 , f \rangle} & A^{\y m_i} \times A^{m_i} & \rTo^{\varsigma_{m_i}} & A \quad ,
\end{diagram}
where $f = \big\langle \alg{\Theta \Sep \Gamma \vdash t_j}{A} \big\rangle_{j \in \card{m_i}}$.
\item[-] For an operator $\omega \colon \tuple{n}{l}$ of $\Sigma$,
$ \alg{\Theta \Sep \Gamma \vdash \omega \big( (\vvar{y}_1) t_1, \dots, (\vvar{y}_l) t_l \big)}{A} $
is the composite
\begin{diagram}[small]
\alg{\Theta \Sep \Gamma}{A} & \rTo^{\langle f_j \rangle_{j \in \card{l}}} & \prod_{j \in \card{l}} A^{\y n_j} & \rTo^{\alg{\omega}{A}} & A \quad,
\end{diagram}
where $f_j$ is the exponential transpose of
\begin{diagram}[small]
\prod_{i \in \card{k}} A^{\y m_i} \times \y n \times \y n_j \cong \prod_{i \in \card{k}} A^{\y m_i} \times \y (n+n_j) & \rTo^{\alg{\Theta \Sep \Gamma, \vvar{y}_j \vdash t_j}{A}} & A \quad . \\
\end{diagram}
\end{itemize}

 A model $A \in \Mod(\Sigma)$ \emph{satisfies} an equation $\Theta \Sep \Gamma \vdash s \eq t$, which we write as $A \models (\Theta \Sep \Gamma \vdash s \eq t)$, if and only if $\alg{\Theta \Sep \Gamma \vdash s}{A} = \alg{\Theta \Sep \Gamma \vdash t}{A}$ in $\Set^\ff$. 
For a second-order equational presentation $\E = (\Sigma,E)$, the category $\Mod(\E)$ of $\E$-models is the full subcategory of $\Mod(\Sigma)$ consisting of the $\Sigma$-models that satisfy the axioms $E$. 

\begin{thm}[Second-Order Soundness and Completeness]
For a second-order equational presentation $\E = (\Sigma,E)$, the judgement $\Theta \Sep \Gamma \vdash s \eq t$ is derivable from $E$ if and only if $A \models (\Theta \Sep \Gamma \vdash s \eq t)$ for all $\E$-models $A$.
\end{thm}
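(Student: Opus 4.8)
The plan is to prove the two implications separately: soundness (derivability implies satisfaction in all $\E$-models) by induction on derivations, and completeness (satisfaction in all $\E$-models implies derivability) by constructing a term model.

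For soundness, I would argue by induction on the derivation of $\Theta \Sep \Gamma \vdash s \eq t$ in Second-Order Equational Logic. The (Axioms) case holds by the very definition of an $\E$-model as a $\Sigma$-model satisfying $E$. The three (Equivalence) cases are immediate, since satisfaction is equality of interpretation morphisms in $\Set^\ff$ and equality is reflexive, symmetric and transitive. The only substantial case is (Extended metasubstitution). To handle it, I would first establish a semantic metasubstitution lemma: for a $\Sigma$-model $A$, the interpretation $\alg{\Theta \Sep \Gamma, \Delta \vdash s\{\mvar{m}_i \Def (\vvar{x}_i)s_i\}_{i \in \card{k}}}{A}$ factors as a composite built naturally from $\alg{\mcon{m}{k} \Sep \Gamma \vdash s}{A}$ and the family $\alg{\Theta \Sep \Delta, \vvar{x}_i \vdash s_i}{A}$. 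This lemma is proved by structural induction on $s$, the variable and metavariable cases using the monoid unit $\nu$, multiplication $\varsigma$ and the substitution operations, and the operator case relying crucially on the compatibility ($\Sigma$-monoid) diagram relating $\alpha$, $\varsigma$ and the pointed strength $\varpi$. Given the lemma, the premises of the rule make the two composites agree, yielding the conclusion.

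For completeness, I would build the term model $M \in \Set^\ff$ (the classifying $\E$-model) whose sections are second-order terms taken modulo derivable equality from $E$, with presheaf action given by renaming of variables. I would equip $M$ with a $\Sigma$-algebra structure induced by the operators of $\Sigma$, a unit $\nu \colon \y 1 \To M$ selecting variables, and a multiplication $\varsigma \colon M \tensor M \To M$ given by capture-avoiding substitution; well-definedness on equivalence classes uses the derivable (Substitution) rule, and the monoid and compatibility axioms follow from the Substitution-Metasubstitution and Metasubstitution Lemmas recalled in Appendix A. Since $M$ identifies exactly the terms equated by $E$, it is an $\E$-model by construction. The heart of the argument is then a generic-evaluation lemma: interpreting $\Theta \Sep \Gamma \vdash t$ in $M$ and evaluating at the canonical generic element (the tuple of metavariables and variables of $\Theta \Sep \Gamma$) returns the class of $t$ itself. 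Consequently $\alg{\Theta \Sep \Gamma \vdash s}{M} = \alg{\Theta \Sep \Gamma \vdash t}{M}$ forces $s \eq t$ to be derivable, and since the hypothesis gives satisfaction in every $\E$-model, in particular in $M$, derivability follows.

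I expect the main obstacle to lie in the two metasubstitution-related steps: on the soundness side, the semantic metasubstitution lemma, whose operator case forces genuine use of the pointed strength and the $\Sigma$-monoid compatibility diagram; and on the completeness side, verifying that $M$ is a bona fide $\Sigma$-monoid and proving the generic-evaluation lemma, since both demand careful bookkeeping of the interaction between the variable-substitution level ($\tensor$, $\varsigma$) and the metavariable-substitution level. The equivalence and axiom cases, by contrast, are routine.
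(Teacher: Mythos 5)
The paper does not actually prove this theorem: it is recalled from Fiore and Hur \cite{FioreHur2010} (``We review the soundness and completeness of the model theory \dots as presented in \cite{FioreHur2010}''), with no argument given in the text itself. Your proposal --- soundness by induction on derivations, reduced to a semantic substitution/metasubstitution lemma whose binding-operator case invokes the $\Sigma$-monoid compatibility square with the pointed strength, and completeness via the term model of terms in metavariable context $\Theta$ modulo derivable equality together with a generic-evaluation lemma --- is precisely the standard argument of that cited reference, so it is correct and takes essentially the same route as the proof the paper defers to.
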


At the level of equational derivability, the extension of (first-order) universal algebra to the second-order framework, as presented in this chapter, is conservative. 
Clearly, every first-order signature is a second-order signature in which all operators do not bind any variables in their arguments. Any first-order term $\Gamma \vdash t$ can therefore be represented as the second-order term $\emptycon \Sep \Gamma \vdash t$. Indeed, for a set of first-order equations, if the equation $\Gamma \vdash s \eq t$ is derivable in first-order equational logic, then its corresponding second-order representative $\emptycon \Sep \Gamma \vdash s \eq t$ is derivable in second-order equational logic. 
The converse statement is what is known as \emph{conservativity} of second-order equational derivability. Although this result is not directly utilised in our work, we recall it for the benefit of comprehensiveness, and refer the reader to \cite{FioreHur2010} for the proof.

\begin{thm}[Conservativity]
Second-Order Equational Logic (Figure 1) is a conservative extension of First-Order Equational Logic. More precisely, if a second-order equation between first-order terms \linebreak $\emptycon \Sep \Gamma \vdash s \eq t$ lying in an empty metavariable context is derivable in second-order equational logic, then $\Gamma \vdash s \eq t$ is derivable in first-order equational logic.
\end{thm}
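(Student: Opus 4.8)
\section*{Proof proposal}

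The plan is to prove conservativity semantically, leveraging the Second-Order Soundness and Completeness theorem together with the classical (Birkhoff) completeness of first-order equational logic. Throughout, fix a first-order signature $\Sigma$ and a set $E$ of first-order axioms, both regarded as second-order data with empty binding and empty metavariable contexts. Since the reverse implication (first-order derivability implies second-order derivability) was already observed in the text, the content lies in showing that second-order derivability of $\emptycon \Sep \Gamma \vdash s \eq t$ forces first-order derivability of $\Gamma \vdash s \eq t$. By first-order completeness it suffices to show $X \models (\Gamma \vdash s \eq t)$ for every set-theoretic $\E$-algebra $X$; by second-order soundness, second-order derivability already gives satisfaction in every $\E$-model in $\Set^\ff$. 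The task therefore reduces to building, from each first-order $\E$-algebra $X$, a second-order $\E$-model whose satisfaction of the second-order equation is equivalent to $X \models (\Gamma \vdash s \eq t)$.

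The bridge will be the functional model $A_X \in \Set^\ff$ defined by $A_X(n) = \mathrm{Hom}_{\Set}(X^n, X)$, with reindexing along $f \colon m \To n$ given by precomposition with the map $X^f \colon X^n \To X^m$. First I would equip $A_X$ with $\Sigma$-monoid structure: the unit $\nu \colon \y 1 \To A_X$ sends the $i$-th variable to the projection $\pi_i \colon X^n \To X$; the substitution multiplication $\varsigma \colon A_X \tensor A_X \To A_X$ is function composition, $(g, h_1, \dots, h_j) \mapsto (x \mapsto g(h_1 x, \dots, h_j x))$; and the algebra map $\alpha \colon \sfun{\Sigma} A_X \To A_X$ interprets each operator $\omega$ pointwise via $\omega^X$. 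Checking that these data satisfy the monoid laws and the algebra--monoid compatibility diagram amounts to the clone identities for $X$ (associativity and unitality of composition, and the fact that operator application commutes with substitution of arguments), and is routine since $\Sigma$ carries no binding.

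The heart of the argument is a bridging lemma established by structural induction on first-order terms: for every $\emptycon \Sep \Gamma \vdash t$ with $\card{\Gamma} = n$, the second-order interpretation $\alg{\emptycon \Sep \Gamma \vdash t}{A_X} \colon \y n \To A_X$, viewed by Yoneda as an element of $A_X(n) = \mathrm{Hom}_{\Set}(X^n, X)$, is exactly the function $x \mapsto \alg{t}{X}_x$ denoted by $t$ under the ordinary first-order semantics in $X$. The variable case follows because $\nu_n$ composed with a projection selects $\pi_i$, matching the denotation of $x_i$; the operator case follows because $\alpha$ is pointwise application of $\omega^X$ through the isomorphism $A_X^{\y 0} \cong A_X$. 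With this lemma in hand, $A_X$ satisfies the second-order form of each axiom in $E$ precisely because $X$ satisfies the corresponding first-order axiom, so $A_X$ is a genuine $\E$-model; and $A_X \models (\emptycon \Sep \Gamma \vdash s \eq t)$ holds iff $s$ and $t$ denote the same function on $X$, i.e. iff $X \models (\Gamma \vdash s \eq t)$. Chaining second-order soundness, the bridging lemma, and first-order completeness then yields the result.

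I expect the main obstacle to be the bridging lemma, specifically unwinding the coend-based tensor $\tensor$, the exponential transposes, and the pointed strength in the definition of $\alg{-}{A_X}$ to confirm that the abstract presheaf semantics reduces, on first-order terms, to elementary function composition in $X$; once the structure maps of $A_X$ are pinned down this is a direct but bookkeeping-heavy induction. A secondary point requiring care is ensuring that the empty metavariable and empty binding contexts genuinely trivialise the metavariable and operator-binding clauses of the interpretation, so that no second-order-only structure ($\varsigma$ applied to nontrivial metavariable arguments, or fresh-variable generation in the strength) is ever invoked on the terms and axioms at issue.
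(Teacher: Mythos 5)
The first thing to note is that the paper contains no proof of this theorem to compare against: the authors state it only ``for the benefit of comprehensiveness'' and explicitly defer the proof to Fiore and Hur \cite{FioreHur2010}. So your proposal can only be judged on its own merits and against the machinery the paper does develop elsewhere.

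On those terms, your proposal is correct. The chain (second-order soundness, Theorem 3.6) $\Rightarrow$ (satisfaction in every second-order $\E$-model) $\Rightarrow$ (satisfaction in each transferred model $A_X$) $\Rightarrow$ (satisfaction in every first-order $\E$-algebra $X$) $\Rightarrow$ (first-order derivability, by Birkhoff completeness for equations-in-context) is sound, provided derivability is read relative to a set $E$ of first-order axioms regarded as second-order ones, which is indeed the reading intended by the paragraph preceding the theorem. Your carrier $A_X(n) = \mathrm{Hom}_{\Set}(X^n,X)$ is the clone of operations on $X$: the unit and multiplication you give are dinatural in the coend variable, so $\varsigma$ is well defined on $A_X \tensor A_X$, the pointwise $\sfun{\Sigma}$-algebra structure satisfies the compatibility pentagon precisely because $\Sigma$ binds no variables, and your bridging lemma holds by the induction you sketch (with $\Theta$ empty the carrier $\alg{\emptycon \Sep \Gamma}{A_X}$ is $\y n$, the metavariable clause never fires, and $A_X^{\y 0} \cong A_X$ trivialises the transposes). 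The one substantive remark is that you are rebuilding by hand apparatus the paper already provides in Section 7: the family $\{\mathrm{Hom}_{\Set}(X^n,X)\}_{n \in \N}$ with projections as variable maps, composition as substitution maps, and pointwise $\alg{\omega}{X}$ as operator maps is exactly a $\Sigma$-clone in $\Set$, its clone term interpretation of a first-order term is literally the first-order term function, and Propositions 7.1 and 7.2 (the equivalences $\Mod(\Sigma) \simeq \Sigma\mbox{-}\Clone(\Set)$ and $\Mod(\E) \simeq \E\mbox{-}\Clone(\Set)$) then hand you the corresponding second-order $\E$-model together with the agreement of interpretations that your bridging lemma establishes. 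Routing the argument through clones would eliminate the coend-level and pointed-strength bookkeeping you identify as the main obstacle, since the clone semantics of Section 7.2 is already phrased in elementary cartesian terms; either way, the proof goes through.
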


\section{The Second-Order Theory of Equality}
\label{so_equality}

In categorical algebraic theory, the elementary theory of equality plays a pivotal role, as it represents the most fundamental such theory. We identify the second-order algebraic theory of equality $\M$. This we do first in syntactic terms, via an explicit description of its categorical structure, and in abstract terms by establishing its universal property. 
Just as the cartesian structure characterises first-order algebraic theories, we will show that \emph{exponentiability} abstractly formalises essential second-order characteristics. 

\subsection{Categorical exponentials}

For $\cat{C}$ a cartesian category and $A,B$ objects of $\cat{C}$, an \emph{exponential object} $\e{A}{B}$ is a universal morphism from $- \times A \colon \cat{C} \To \cat{C}$ to $B$. Explicitly, $\e{A}{B}$ comes equipped with a morphism $\ev \colon (\e{A}{B}) \times A \To B$ such that for any object $C$ of $\cat{C}$ and $f \colon C \times A \To B$, there is a unique $\lam(f) \colon C \To \e{A}{B}$, the \emph{exponential mate} of $f$, making $\ev \circ (\lam(f) \times A) = f$. 
A cartesian functor $F \colon \cat{C} \To \cat{D}$ is \emph{exponential} if it preserves the exponential structure in $\cat{C}$. Formally, for any exponential $\e{A}{B}$ in $\cat{C}$, $\e{FA}{FB}$ is an exponential object in $\cat{D}$ and the exponential mate of \vspace{-0.1cm}
\begin{diagram}[small]
F(\e{A}{B}) \times FA \cong F((\e{A}{B}) \times A) & \rTo^{F\ev} & FB
\end{diagram} 
is an isomorphism $F(\e{A}{B}) \To \e{FA}{FB}$. 
An object $C$ in a cartesian category $\cat{C}$ is \emph{exponentiable} if for all objects $D \in \cat{C}$ the exponential $\e{C}{D}$ exists in $\cat{C}$. Given an exponentiable object $C$, the $n$-ary cartesian product $C^n$ is obviously exponentiable for all $n \in \N$.

\subsection{The Second-Order Theory of Equality}

The syntactic viewpoint of second-order theories leads us to define the category $\M$ with set of objects given by $\N^*$ and morphisms $\tuple{m}{k} \To \tuple{n}{l}$ given by tuples
$$ \big\langle \mcon{m}{k} \Sep \vcon{x}{n_i} \vdash t_i \big\rangle_{i \in \card{l}} $$
of elementary terms under the empty second-order signature. 
The identity on $\tuple{m}{k}$ is given by
$$ \big\langle \mcon{m}{k} \Sep \vcon{x}{m_i} \vdash \mvar{m}_i[\vcon{x}{m_i}] \big\rangle_{i \in \card{k}} \quad ;$$
whilst the composition of 
$$ \big\langle \mcon{l}{i} \Sep \vcon{x}{m_p} \vdash s_p \big\rangle_{p \in \card{j}} \colon \tuple{l}{i} \To \tuple{m}{j} $$
and 
$$ \big\langle \mcon{m}{j} \Sep \vcon{y}{n_q} \vdash t_q \big\rangle_{q \in \card{k}} \colon \tuple{m}{j} \To \tuple{n}{k} $$
is given via metasubstitution by
$$ \big\langle \mcon{l}{i} \Sep \vcon{y}{n_q} \vdash t_q \{ \mvar{m}_p \Def (\vcon{x}{m_p}) s_p \}_{p \in \card{j}} \big\rangle_{q \in \card{k}} \colon \tuple{l}{i} \To \tuple{n}{k} \quad . $$

The category $\M$ is well-defined, as the identity and associativity axioms hold because of intrinsic properties given by the Metasubstitution Lemmas.
It comes equipped with a strict cartesian structure, with the terminal object given by the empty sequence $()$, the terminal map $\tuple{m}{k} \To ()$ being the empty tuple $\langle \rangle$, and the binary product of $\tuple{m}{k}$ and $\tuple{n}{l}$ given by their concatenation $(m_1, \dots, m_k, n_1, \dots, n_l)$. Any object $\tuple{m}{k}$ is thus the cartesian product of the single tuples $(m_i)$, for $i \in \card{k}$, with projections
$$ \big\langle \mcon{m}{k} \Sep \ivcon{x}{m}{i} \vdash \mvar{m}_i [\ivcon{x}{m}{i}] \big\rangle \colon \tuple{m}{k} \To (m_i) \quad . $$
Indeed, given morphisms
$$ \big\langle \mcon{n}{l} \Sep \ivcon{x}{m}{i} \vdash q_i \big\rangle \colon \tuple{n}{l} \To (m_i) $$
for $i \in \card{k}$, the mediating morphism is
$$ \big\langle \mcon{n}{l} \Sep \ivcon{x}{m}{i} \vdash q_i \big\rangle_{i \in \card{k}} \quad . $$

\subsection{Exponential structure} 

In $\M$, the object $(0)$ is exponentiable. For any tuple $\tuple{m}{k}$, the exponential $\e{(0)}{\tuple{m}{k}}$ is given by $\etuple{m}{k}{1}$, with evaluation map $\ev_{\overrightarrow{m},1} \colon \etuple{m}{k}{1} \times (0) \To \tuple{m}{k}$ given by the $k$-tuple
$$ \big\langle\emcon{m}{k}{1}, \mvar{n}:[0] \Sep \ivcon{x}{m}{i} \vdash \mvar{m}_i \big[ \ivcon{x}{m}{i}, \mvar{n}[] \big] \big\rangle_{i \in \card{k}} \quad . $$

For any $\tuple{n}{l}$, the exponential mate $\lam (\langle t_i \rangle_{i \in \card{k}})$ of a map 
$$ \big\langle \mcon{n}{l}, \mvar{m}:[0] \Sep \ivcon{x}{m}{i} \vdash t_i \big\rangle_{i \in \card{k}} \colon \tuple{n}{l} \times (0) \To \tuple{m}{k} $$
is given by
$ \big\langle \mcon{n}{l} \Sep \ivcon{x}{m}{i}, y_i \vdash t_i \{ \mvar{m} \Def y_i \} \big\rangle_{i \in \card{k}}$.
More generally, for any $n \in \N$, the exponential $\e{(0)^n}{\tuple{m}{k}}$ is given by the tuple $ \etuple{m}{k}{n}$. 

The exponential structure in $\M$ embodies attributes intrinsic to second-order languages. First, note that for each $n \in \N$, the \emph{metaweakening} operation $W_n \colon \M \To \M$ mapping $\tuple{m}{k}$ to $\etuple{m}{k}{n}$, and a morphism $\tuple{m}{k} \To \tuple{n}{l} $ of the form
$$ \big\langle \mcon{m}{k} \Sep \ivcon{y}{n}{j} \vdash t_j \big\rangle_{j \in \card{l}} $$
to
\begin{eqnarray*}
&\quad& \Big\langle  \emcon{m'}{k}{n} \Sep \ivcon{y}{n}{j}, z_1^{(j)}, \dots, z_n^{(j)} \vdash \\ &\qquad& \hspace{90pt}  t_j \left\{ \mvar{m}_i \Def (\ivcon{x}{m}{i}) \mvar{m'}_i \big[ \ivcon{x}{m}{i}, z_1^{(j)}, \dots, z_n^{(j)} \big] \right\}_{i \in \card{k}} \Big\rangle_{j \in \card{l}}
\end{eqnarray*}
is in fact the right adjoint $\e{(0)^n}{(-)} \colon \M \To \M$ to the functor $(-) \times (0)^n \colon \M \To \M$. 
Moreover, for any $\tuple{m}{k}$, the resulting bijection
$$ \M \big( \tuple{m}{k}, \e{(0)^n}{(0)} \big) \squad \cong \squad \M \big( \tuple{m}{k} \times (0)^n, (0) \big) $$
formalises the correspondence between a second-order term and its parameterisation. 
Abstractly, every morphism $\langle s \rangle \colon \tuple{m}{k} \To (n) $ can be \emph{parameterised} as $\ev_n \circ \big( \langle s \rangle \times (0)^n \big)$, whose exponential mate $\lam \big( \ev_n \circ ( \langle s \rangle \times (0)^n )\big)$ is just $\langle s \rangle$. 
Finally, the exponential structure manifests itself in all second-order terms, which, when viewed as morphisms of $\M$, decompose via unique universal maps.

\begin{lem}
In the category $\M$, every morphism of the form
$$ \big\langle \mcon{m}{k} \Sep \vcon{x}{n} \vdash x_i \big\rangle \colon \tuple{m}{k} \To (n) $$
decomposes as \vspace{-1cm}
\begin{diagram}
\tuple{m}{k} & \rTo^{} & () & \rTo^{\lam(\pi^{(n)}_i \circ \cong)} & (n) \quad ,
\end{diagram}
where the unlabelled morphism is the unique terminal map, and $\lam(\pi^{(n)}_i \circ \cong)$ is the exponential mate of the $i$-th projection $() \times (0)^n \cong (0)^n \stackrel{\pi^{(n)}_i}{\longrightarrow} (0)$. Moreover, every morphism 
$$ \big\langle \mcon{m}{k} \Sep \vcon{x}{n} \vdash \mvar{m}_i [ t_1, \dots, t_{m_i} ] \big\rangle \colon \tuple{m}{k} \To (n) $$
decomposes as \vspace{-1cm}
\begin{diagram}
\tuple{m}{k} & \rTo^{\left\langle \pi_i, t_1, \dots, t_{m_i} \right\rangle} & ( m_i, n^{m_i}) & \rTo^{\varsigma_{m_i, n}} & (n) \quad ,
\end{diagram}
where $n^{m_i}$ denotes the sequence $n, \dots, n$ of length $m_i$, $\varsigma_{m_i,n}$ is the exponential mate of 
\begin{diagram}
(m_i, n^{m_i}) \times (0)^n & \rTo^{(m_i) \times \ev_{m_i, n}} & (m_i) \times (0)^{m_i} & \rTo^{\ev_{m_i}} & (0) \quad ,
\end{diagram}
and $\ev_{m_i,n}$ is the evaluation map associated with the exponential $(\e{(0)^n}{(0)^{m_i}}) = (n)^{m_i}$.
\end{lem}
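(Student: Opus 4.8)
The plan is to establish both decompositions by direct computation in $\M$. In each case I would first make the ``universal'' map on the right-hand side explicit as a tuple of elementary terms---by unwinding the relevant evaluation map and its exponential mate---and then evaluate the displayed composite using the metasubstitution formula for composition in $\M$, verifying that the outcome is the stated morphism. The only ingredients required are the explicit descriptions of the evaluation maps, exponentials and exponential mates from the preceding subsection (in their evident $n$-ary forms) together with the inductive definition of metasubstitution.

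For the first decomposition I would compute $\lam(\pi^{(n)}_i \circ \cong) \colon () \To (n)$. Writing the projection as the elementary term $\langle \mvar{m}_1 \colon [0], \dots, \mvar{m}_n \colon [0] \Sep \emptycon \vdash \mvar{m}_i[] \rangle \colon (0)^n \To (0)$ and applying the exponential-mate operation---which replaces the $n$ arity-zero metavariables arising from the $(0)^n$ factor by fresh variables adjoined to the variable zone---yields $\langle \emptycon \Sep \vcon{x}{n} \vdash x_i \rangle$. Since its term $x_i$ contains no metavariable, composing on the right with the terminal map $\langle \, \rangle \colon \tuple{m}{k} \To ()$ changes only the metavariable context and leaves the term intact, so the composite is precisely $\langle \mcon{m}{k} \Sep \vcon{x}{n} \vdash x_i \rangle$.

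The second decomposition is the substantial case, which I would carry out in stages. First, record $\ev_{m_i}$ and $\ev_{m_i,n}$ explicitly as elementary terms, read off from the general evaluation formula. Second, compute $\ev_{m_i} \circ ((m_i) \times \ev_{m_i,n})$ by metasubstitution: the head metavariable of arity $m_i$ receives as its $m_i$ arguments the terms produced by $\ev_{m_i,n}$, giving a single term over $(m_i, n^{m_i}) \times (0)^n$ which is a metavariable applied to $m_i$ metavariables each applied to the $n$ parameters. Third, take the exponential mate in the $(0)^n$ factor to obtain $\varsigma_{m_i,n} \colon (m_i, n^{m_i}) \To (n)$. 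Finally, compose $\varsigma_{m_i,n}$ with $\langle \pi_i, t_1, \dots, t_{m_i} \rangle$, substituting the head metavariable by (the abstraction of) $\pi_i = \mvar{m}_i[\ivcon{x}{m}{i}]$ and each of the remaining $m_i$ metavariables by (the abstraction of) the corresponding $t_j$; unfolding the metasubstitution collapses the expression to $\mvar{m}_i[t_1, \dots, t_{m_i}]$, as required.

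The main obstacle is entirely one of disciplined bookkeeping rather than conceptual depth. The delicate point is the two-level nature of metasubstitution: when a node $\mvar{m}[s_1, \dots, s_r]$ is reached, the body replacing $\mvar{m}$ must in turn undergo a first-level (variable) substitution by the recursively processed arguments $s_1, \dots, s_r$. In the last step of the second part this is exactly the mechanism by which the parameters $t_1, \dots, t_{m_i}$ are plugged into the body $\mvar{m}_i[\ivcon{x}{m}{i}]$ of $\pi_i$, and it must be tracked carefully---alongside the freshness and distinctness conventions on the auxiliary variables and metavariables---to avoid spurious capture. Once the $n$-ary evaluation maps and mates are correctly extracted from the $n=1$ descriptions recalled above, both computations reduce to routine applications of the composition and metasubstitution definitions.
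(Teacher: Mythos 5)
Your proposal is correct and takes essentially the same route as the paper: both write the universal maps out explicitly as tuples of elementary terms (the exponential mate of $\pi^{(n)}_i \circ \cong$ simplifying to $\langle \emptycon \Sep \vcon{x}{n} \vdash x_i \rangle$, and $\varsigma_{m_i,n}$ being the term $\mvar{m}_i[\mvar{n}_1[\vcon{x}{n}], \dots, \mvar{n}_{m_i}[\vcon{x}{n}]]$ in the appropriate context), and then verify the two composites by unfolding the metasubstitution definition of composition in $\M$. The only cosmetic difference is that you derive the syntactic form of $\varsigma_{m_i,n}$ from the evaluation maps in stages, whereas the paper simply exhibits it directly; the final collapsing computation is identical.
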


\begin{proof}
Explicitly, $(\pi_i^{(n)} \circ \cong) \colon () \times (0)^n \To (0)$ is given by
$ \big\langle \mvar{n}_1 \colon [0], \dots, \mvar{n}_n \colon [0] \Sep \emptycon \vdash \mvar{n}_i [] \big\rangle $
and its unique exponential mate is
$ \big\langle \emptycon \Sep \vcon{x}{n} \vdash \mvar{n}_i[] \big\{ \mvar{n}_j \Def x_j \big\}_{j \in \card{n}} \big\rangle $,
which is simply $\big\langle \emptycon \Sep \vcon{x}{n} \vdash x_i \big\rangle$. Composing this with $\langle  \rangle \colon \tuple{m}{k} \To ()$ yields $\big\langle \mcon{m}{k} \Sep \vcon{x}{n} \vdash x_i \big\rangle$. Next, the morphism $\varsigma_{m_i,n} \colon (m_i,n^{m_i}) \To (n)$ is syntactically given by
$$ \big\langle \mvar{m}_i \colon [m_i], \mvar{n}_1 \colon[0], \dots, \mvar{n}_{m_i} \colon[0] \Sep \vcon{x}{n} \vdash \mvar{m}_i \big[ \mvar{n}_1 [\vcon{x}{n}], \dots, \mvar{n}_{m_i} [\vcon{x}{n}] \big] \big\rangle \quad, $$
and thus composed with $\langle \pi_i, t_1, \dots, t_{m_i} \rangle$
\begin{eqnarray*}
\Big\langle \mvar{m}_i \colon [m_i], \mvar{n}_1 \colon[0], \dots, \mvar{n}_{m_i} \colon[0] \Sep \vcon{x}{n} & \vdash & \mvar{m}_i \big[ \mvar{n}_1 [\vcon{x}{n}], \dots, \mvar{n}_{m_i} [\vcon{x}{n}] \big] \\
& \quad & \big\{ \mvar{m}_i \Def (\vcon{y}{m_i}) \mvar{m}_i [\vcon{y}{m_i}] \big\} \\
& \quad & \big\{ \mvar{n}_j \Def (\vcon{x}{n}) t_j \big\}_{j \in \card{m_i}} \Big\rangle \quad ,
\end{eqnarray*}

\vspace{-10pt} 
this equals
$$ \big\langle \mvar{m}_i \colon [m_i], \mvar{n}_1 \colon[0], \dots, \mvar{n}_{m_i} \colon[0] \Sep \vcon{x}{n} \vdash \mvar{m}_i [t_1, \dots, t_{m_i}] \big\rangle \quad . $$
\end{proof}

\subsection{Universal property}

The exponential structure in $\M$ provides a universal semantic characterisation of $\M$. Loosely speaking, $\M$ is the free strict cartesian category on an exponentiable object. We point out the analogy to the first-order theory of equality $\F$, which is the cartesian category freely generated by a single object. 

\begin{prop}[Universal property of $\M$]
The category $\M$, together with the exponentiable object $(0) \in \M$, is initial amongst cartesian categories equipped with an exponentiable object and with respect to cartesian functors that preserve the exponentiable object.
\end{prop}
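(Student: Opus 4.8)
The plan is to establish the universal property directly: for an arbitrary cartesian category $\cat{C}$ equipped with a chosen exponentiable object $E$, I would construct a cartesian functor $G \colon \M \To \cat{C}$ with $G((0)) = E$ that preserves the exponentiable object, and then show it is the only such functor. The action on objects is entirely forced by the required preservation properties. Since $(m) = \e{(0)^m}{(0)}$ in $\M$ and $G$ must preserve exponentials, we are compelled to set $G((m)) = \e{E^m}{E}$, which exists because $E^m$ is exponentiable; since every object is a product $\tuple{m}{k} = \prod_{i \in \card{k}} (m_i)$ and $G$ must preserve products, this forces $G(\tuple{m}{k}) = \prod_{i \in \card{k}} \e{E^{m_i}}{E}$, with $G$ of the empty sequence the terminal object and $G((0)) = E$. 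As $\M$ has strict products while $\cat{C}$ need not, $G$ preserves products only up to the canonical comparison isomorphisms, and initiality is understood relative to these (equivalently, relative to chosen products and exponentials in $\cat{C}$), exactly as for functorial models in the first-order setting.

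For the action on morphisms I would use the decomposition Lemma. Because the underlying signature is empty, each component term of a morphism of $\M$ is either a variable $x_i$ or a metavariable application $\mvar{m}_i[t_1, \dots, t_{m_i}]$, and the Lemma factors each as a composite of universal maps $-$ the exponential mate of a projection in the first case, and the substitution map $\varsigma_{m_i,n}$ precomposed with a tupling in the second. I would define $G$ on a term to be the corresponding composite of the $\cat{C}$-images of these maps: mates of projections, and the internal-substitution morphisms of $\cat{C}$ assembled from $\ev$, with a general morphism of $\M$ sent to the tupling of its components. Every ingredient is uniquely pinned down by the cartesian and exponential structure of $\cat{C}$, so there is no latitude in this definition.

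The substantive work lies in verifying functoriality. Preservation of identities follows from the Lemma applied to the identity tuple $\langle \mvar{m}_i[\vvar{x}] \rangle$. The crux is that $G$ sends the composition of $\M$, which is metasubstitution, to composition in $\cat{C}$: expanding the interpretation of $t_q \{ \mvar{m}_p \Def (\vvar{x}) s_p \}$ and comparing it with the composite of the interpretations of the $t_q$ with those of the $s_p$, one argues by induction on the structure of $t_q$, the variable and metavariable cases being discharged by the universal equation $\ev \circ (\lam(f) \times \mathrm{id}) = f$ together with the naturality and associativity of the substitution maps. This is a soundness calculation directly parallel to the interpretation of terms in a $\Sigma$-model recalled earlier, and it is precisely here that the Metasubstitution Lemmas $-$ which guarantee the syntactic identities underlying the well-definedness of composition in $\M$ $-$ are used. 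I expect this compatibility of interpretation with metasubstitution to be the main obstacle, since the inductive bookkeeping must line up exactly with the universal-property equations.

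Finally, I would observe that $G$ is cartesian and exponential-preserving by construction, since products map to products and $G((m)) = \e{E^m}{E} = \e{G((0))^m}{G((0))}$ with $G(\ev_{\vvar{m},1})$ equal to the evaluation map of $\cat{C}$. Uniqueness then follows from the same decomposition Lemma: any exponential-preserving cartesian functor $H$ with $H((0)) = E$ must send each $(m)$ to $\e{E^m}{E}$ and each universal map (terminal map, projection, evaluation, exponential mate) to its counterpart in $\cat{C}$, hence must agree with $G$ on all objects and on all generating morphisms, and therefore on the whole of $\M$.
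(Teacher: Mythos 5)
Your proposal follows essentially the same route as the paper's proof: the functor is forced on objects by preservation of the cartesian and exponential structure, defined on morphisms by structural induction on terms (the variable case via the mate of a projection, the metavariable case via the substitution map $\varsigma_{m_i,n}$ and tupling), and uniqueness is obtained from the decomposition Lemma, since any competing cartesian, exponential-preserving functor must agree on the universal maps into which every morphism of $\M$ factors. If anything you are more explicit than the paper, which asserts that its functor $I$ is well defined without spelling out the compatibility of the interpretation with metasubstitution (i.e.\ preservation of composition) that you correctly single out as the substantive verification.
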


\begin{proof}
Let $\cat{D}$ be a cartesian category equipped with an exponentiable object $D$. There is a functor $I \colon \M \To \cat{D}$ mapping the tuple $\tuple{m}{k}$ to $(\e{D^{m_1}}{D}) \times \dots \times (\e{D^{m_k}}{D})$, and defined on morphisms of $\M$ by structural induction as follows: 
\begin{itemize}
\item $\qquad \big\langle \mcon{m}{k} \Sep \vcon{x}{n} \vdash x_i \big\rangle \colon \tuple{m}{k} \To (n) \qquad \stackrel{I}{\longmapsto}$ \vspace{-0.3cm}
\begin{diagram}[small]
(\e{D^{m_1}}{D}) \times \dots \times (\e{D^{m_k}}{D}) & \rTo^{!^\cat{D}} & 1 & \rTo^{\lam(\pi_i^\cat{D} \circ \cong)} & (\e{D^n}{D})
\end{diagram}
\item $\qquad \big\langle \mcon{m}{k} \Sep \vcon{x}{n} \vdash \mvar{m}_i [t_1, \dots, t_{m_i}] \big\rangle \colon \tuple{m}{k} \To (n) \qquad \stackrel{I}{\longmapsto}$ \vspace{-0.2cm}
\begin{diagram}[small]
(\e{D^{m_1}}{D}) \times \dots \times (\e{D^{m_k}}{D}) & \rTo^{\big\langle \pi^{\cat{D}}_i, I\langle t_1 \rangle, \dots, I \langle t_{m_i} \rangle \big\rangle} & (\e{D^{m_i}}{D}) \times (\e{D^n}{D})^{m_i} & \rTo^{\varsigma^\cat{D}_{m_i,n}} & (\e{D^n}{D})
\end{diagram}
\end{itemize}

We superscript cartesian and exponential maps by $\cat{D}$ to distinguish them from those in $\M$.
Note that $I$ is cartesian by definition and moreover exponential. To see this, note that 
$$ I \big( \e{(0)}{(m)} \big) \squad = \squad I (m+1) \squad = \squad \e{D^{m+1}}{D} \squad \cong \squad \e{D}{(\e{D^m}{D})} \squad = \squad \e{I(0)}{I(m)} \quad , $$
and that the exponential mate of $I(\ev_{1,m}) \colon (\e{D^{m+1}}{D}) \times D \To (\e{D^m}{D})$ in $\cat{D}$ is the isomorphism 
$$ (\e{D^{m+1}}{D}) \cong \e{D}{(\e{D^m}{D})} \quad . $$

To see that $I$ is indeed the unique (up to isomorphism) universal functor associated with the initiality of $\M$, suppose that we are given a functor $F \colon \M \To \cat{D}$ which is cartesian and exponential mapping $(0)$ to $D$. Then $F$ is isomorphic to $I$. This is evident on objects, as we have
\begin{eqnarray*}
F\tuple{m}{k} &=& F \big( (m_1) \times \dots \times (m_k) \big) \\
&\cong& F(m_1) \times \dots \times F(m_k) \\
&=& F\big(\e{(0)^{m_1}}{(0)}\big) \times \dots \times F\big(\e{(0)^{m_k}}{(0)}\big) \\
&\cong& \big( \e{F(0)^{m_1}}{F(0)} \big) \times \dots \times \big( \e{F(0)^{m_k}}{F(0)} \big) \\
&=& (\e{D^{m_1}}{D}) \times \dots \times (\e{D^{m_k}}{D}) \\
&=& I\tuple{m}{k} \quad .
\end{eqnarray*}
Given a morphism $\langle t \rangle \colon \tuple{m}{k} \To (n)$ of $\M$, the fact that $I\langle t \rangle = F \langle t \rangle$ is an immediate consequence of the cartesian and exponential property of $F$ and $I$. More precisely, by induction on the structure of the term $t$, we have:
\begin{itemize}
\item The map 
$$\big\langle \mcon{m}{k} \Sep \vcon{x}{n} \vdash x_i \big\rangle \colon \tuple{m}{k} \To (n)$$
decomposes as $\lam(\pi_i^\M \circ \cong) \circ !^\M$, and since $F$ preserves the cartesian and exponential structure, $F \big( \lam(\pi_i^\M \circ \cong) \circ !^\M \big) = \lam(\pi_i^\cat{D} \circ \cong) \circ !^\cat{D}$, which is exactly the image under $I$. 
\item Similarly, $\big\langle \mcon{m}{k} \Sep \vcon{x}{n} \vdash \mvar{m}_i [t_1, \dots, t_{m_i}] \big\rangle \colon \tuple{m}{k} \To (n)$ decomposes via universal cartesian and exponential morphisms of $\M$, which are preserved by both $I$ and $F$, and thus their image under them must be equal.
\end{itemize}
\end{proof}

\vspace{-20pt}

\section{Second-Order Algebraic Theories}

We extend Lawvere's fundamental notion of algebraic theory \cite{Lawvere2004} to the second-order universe. Second-order algebraic theories are defined as second-order-structure preserving functors from the category $\M$ to cartesian categories.

\begin{defn}[Second-order algebraic theories]
A \emph{second-order algebraic theory} consists of a small cartesian category $\Mlaw$ and a strict cartesian identity-on-objects functor $M \colon \M \To \Mlaw$ that preserves the exponentiable  object $(0)$. For second-order algebraic theories $M \colon \M \To \Mlaw$ and $M' \colon \M \To \Mlaw '$, a \emph{second-order algebraic translation} is a cartesian functor $F \colon \Mlaw \To \Mlaw '$ such that 
\begin{diagram}[small]
& & \M & & \\
& \ldTo^{M} & & \rdTo^{M'} \\
\Mlaw & & \rTo^{F} & & \Mlaw ' \quad . 
\end{diagram}
We denote by $\SOAT$ the category of second-order algebraic theories and second-order algebraic translations, with the evident identity and composition.
\end{defn}

The most basic example of a second-order algebraic theory is the \emph{second-order algebraic theory of equality} given by the category $\M$ together with the identity functor. We later show that this is in fact the (second-order) algebraic theory corresponding to a second-order presentation with no operators. This is analogous to the theory of sets corresponding to $\F$ in the first-order setting. 

Every second-order algebraic theory has an \emph{underlying first-order algebraic theory}. To formalise this, recall that the first-order algebraic theory of equality $\F$ is the free strict cartesian category on an object and consider the unique cartesian functor $\F \To \M$ mapping the generating object to the generating exponentiable object $(0)$. Then, the first-order algebraic theory underlying a given second-order algebraic theory $\M \To \Mlaw$ is given by $\F \To \Law_\Mlaw$, where $\F \to \Law_\Mlaw \hookrightarrow \Mlaw$ is the identity-on-objects, full-and-faithful factorisation of $\F \To \M \To \Mlaw$. In particular, the first-order algebraic theory of equality $Id_\F \colon \F \To \F$ underlies the second-order algebraic theory of equality $Id_\M \colon \M \To \M$.

\subsection{Second-Order Theory/Presentation Correspondence}

We illustrate how to construct second-order algebraic theories from second-order equational presentations, and vice versa, and prove that these constructions are mutually inverse.  \\

\textbf{The theory of a presentation.} For a second-order equational presentation $\E = (\Sigma, E)$, the \emph{classifying category} $\M(\E)$ has a set of objects $\N^*$ and morphisms $\tuple{m}{k} \To \tuple{n}{l}$ given by tuples
$$ \big\langle \big[ \mcon{m}{k} \sep \ivcon{x}{n}{i} \vdash t_i \big]_\E \big\rangle_{i \in \card{l}} $$
of equivalence classes of terms generated from $\Sigma$ under the equivalence relation identifying two terms if and only if they are provably equal in $\E$ from \emph{Second-Order Equational Logic} (Figure 1). Identities and composition are defined on representatives as in $\M$. Indeed, composition via metasubstitution respects the equivalence relation, as for 
$$\mcon{m}{k} \sep \vcon{x}{n} \vdash_\E t_1 \eq t_2 \qquad \textrm{and} \qquad \mvar{n} \colon [n] \vdash \vcon{y}{l} \vdash_\E s_1 \eq s_2$$
the equality
$$ \mcon{m}{k} \sep \vcon{y}{l} \vdash_\E s_1 \{ \mvar{n} \Def (\vcon{x}{n}) t_1 \} \eq s_2 \{ \mvar{n} \Def (\vcon{x}{n}) t_2 \} $$
is derivable from Second-Order Equational Logic. The categorical associativity and identity axioms making $\M(\E)$ a well-defined category  follow immediately, as do the facts that $\M(\E)$ comes equipped with the same cartesian structure as in $\M$ and that $(0)$ is exponentiable in $\M(\E)$. 

The category $\M$ classifies the most elementary second-order presentation $\E_0$, which has an empty set of operators and no equations. Indeed, $\M(\E_0)$ has morphisms tuples of terms (as the equivalence relation $\E_0$ singles out every term), and since all terms are elementary, $\M = \M(\E_0)$.

\begin{lem}
For a second-order equational presentation $\E$, the category $\M(\E)$ together with the canonical functor $M_\E \colon \M \To \M(\E)$ is a second-order algebraic theory.
\end{lem}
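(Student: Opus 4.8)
The plan is to verify the three requirements in the definition of a second-order algebraic theory for the pair $(\M(\E), M_\E)$: that $\M(\E)$ is a small cartesian category, that $M_\E$ is a strict cartesian identity-on-objects functor, and that $M_\E$ preserves the exponentiable object $(0)$. Most of the categorical structure has already been supplied in the construction of $\M(\E)$ immediately preceding the statement, where it was noted that $\M(\E)$ is well-defined, carries the same cartesian structure as $\M$, and has $(0)$ exponentiable. The canonical functor $M_\E$ is the identity on the common object set $\N^*$ and sends a morphism $\langle t_i \rangle$ of $\M$ --- a tuple of elementary terms --- to the tuple $\langle [t_i]_\E \rangle$ of their $\E$-equivalence classes.

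First I would confirm that $M_\E$ is a functor. Functoriality is forced by the construction of $\M(\E)$: identities and composition there are defined on representatives exactly as in $\M$, so the identity tuple of metavariable-application terms maps to the identity class, and since composition in both categories is the same metasubstitution operation, $M_\E$ commutes with composition on the nose. The one point requiring care, that metasubstitution respects provable equality and so descends to equivalence classes, has already been discharged in the construction.

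Next I would treat the cartesian and identity-on-objects conditions together. Identity-on-objects is immediate. For strict cartesianness, recall that the terminal object (the empty sequence), binary products (concatenation), projections, and mediating tuples of $\M(\E)$ are given by exactly the same elementary terms as in $\M$; since $M_\E$ merely passes each such structural term to its equivalence class, which is by definition the corresponding structural map of $\M(\E)$, all finite products and their universal morphisms are preserved strictly.

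Finally I would verify preservation of $(0)$. As $M_\E$ fixes objects, the exponential $\e{(0)}{(m)} = (m+1)$ of $\M$ is sent to $(m+1)$, which is exactly $\e{(0)}{(m)}$ in $\M(\E)$; the evaluation map $\ev$ is an elementary term, hence maps to the evaluation map of $\M(\E)$, so the exponential mate of $M_\E(\ev)$ is the identity, trivially an isomorphism. I anticipate no genuine obstacle: every ingredient --- composition, products, exponentials, evaluation --- is specified by identical syntactic formulas in $\M$ and $\M(\E)$, so the term-to-class functor $M_\E$ preserves each of them automatically, and the only delicate compatibility, namely that of metasubstitution with provable equality, is already available from the construction of $\M(\E)$.
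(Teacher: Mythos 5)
Your proposal is correct and follows essentially the same route as the paper: both define $M_\E$ as the identity on objects sending a tuple of terms to the tuple of its $\E$-equivalence classes, and both reduce the verification to the fact, already established in the construction of $\M(\E)$, that the cartesian structure, the exponential structure, and metasubstitution are respected by the equivalence relation. Your write-up merely spells out in more detail (functoriality, strict preservation of products, preservation of the exponentiable object $(0)$) what the paper's two-sentence proof leaves implicit.
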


\begin{proof}
The functor $M_\E$ is the identity on objects and maps a tuple of terms $\langle t_1, \dots, t_n \rangle$ to the tuple of their equivalence classes $\big\langle [t_1]_\E, \dots, [t_n]_\E \big\rangle$. It preserves the cartesian and exponential structures of $\M$ as we have shown that they are, together with metasubstitution, respected by the equivalence relation~$\sim_\E$.
\end{proof}

We refer to $M_\E \colon \M \To \M(\E)$ as the second-order algebraic theory of $\E$. 

\begin{rem}
Consider a second-order signature $\Sigma$ and its induced second-order algebraic theory $M_\Sigma \colon \M \To \M(\Sigma)$. This construction is justified by considering a signature as just an equational presentation with an empty set of equations. Because of its universal property and the fact that every morphism of $\M$ decomposes as universal cartesian and exponential morphisms, it is clear that, since $M_\Sigma \colon \M \To \M(\Sigma)$ preserves the cartesian and exponential structure of $\M$, the algebraic theory $M_\Sigma$ is in this case simply an inclusion functor. 
\end{rem}

\textbf{The presentation of a theory.}
The \emph{internal language} $\IL(M)$ of a second-order algebraic theory $M \colon \M \To \Mlaw$ is the second-order equational presentation defined as follows: 
\begin{itemize}
\setlength\itemindent{32pt}
\item[(Operators)] For every $f \colon \tuple{m}{k} \To (n)$ in $\Mlaw$, we have an operator $\omega_f$ of arity $(m_1, \dots, m_k, 0^n)$, where $0^n$ stands for the appearance of $0$ $n$-times.
\item[(Equations)] Setting
$$ \term_f = \omega_f \big( (\ivcon{x}{m}{1}) \mvar{m}_1 \big[\ivcon{x}{m}{1}\big], \dots, (\ivcon{x}{m}{k}) \mvar{m}_k \big[\ivcon{x}{m}{k}\big], \vcon{x}{n} \big) $$
for every morphism $f \colon \tuple{m}{k} \To (n)$ in $\Mlaw$, we let $\IL(M)$ have equations
\begin{itemize}
\item[$(\E 1)$] $\mcon{m}{k} \sep \vcon{x}{n} \vdash s \eq \term_{M\langle s \rangle}$ \\
for every $\langle s \rangle \colon \tuple{m}{k} \To (n)$ in $\M$, and
\item[$(\E 2)$] $\mcon{m}{k} \sep \vcon{x}{n} \vdash \term_h \eq \term_g \{ \mvar{m}_i \Def (\ivcon{x}{n}{i}) \term_{f_i} \}_{i \in \card{l}}$ \\
for every \vspace{-0.5cm}
\begin{eqnarray*}
h &\colon& \tuple{m}{k} \To (n) \\
g &\colon& \tuple{n}{l} \To (n) \\
f_i &\colon& \tuple{m}{k} \To (n_i) \quad , \quad 1 \leq i \leq l
\end{eqnarray*}
such that $h = g \circ \langle f_1, \dots, f_l \rangle$ in $\Mlaw$.
\end{itemize}
\end{itemize}
We write $\Sigma(M)$ and $E(M)$ for these operators and equations, respectively.

\begin{rem}
This procedure of synthesising internal languages from second-order algebraic theories yields some redundancies in the resulting set of operators. For instance, the operator $\omega_f \colon (m_1, \dots, m_k, 0^n)$ induced by the morphism $f \colon \tuple{m}{k} \To (n)$ of $\Mlaw$ is essentially the same as the operator of same arity induced by the morphism $\ev_n \circ \big( f \times (0)^n \big) \colon (m_1, \dots, m_k, 0^n) \To (0)$. By \emph{essentially the same} we mean that the following is derivable from $(\E 1)$ and $(\E 2)$: 
$ \mcon{m}{k} \sep \vcon{x}{n} \vdash \term_f 
\eq  \term_{\ev_n \circ \big( f \times (0)^n \big)} $.
\end{rem}

\subsection{Towards second-order syntactic categorical algebraic theory correspondence}

Having presented the transformation between second-order algebraic theories and equational presentations, we now prove the first part of the mutual invertibility of these constructions. The second part of the proof requires the theory of second-order syntactic translations, and is hence postponed to the next Section.

\begin{thm}[Theory/presentation correspondence]
Every second-order algebraic theory $M \colon \M \To \Mlaw$ is isomorphic to the second-order algebraic theory $M_{\IL(M)} \colon \M \To \M(\IL(M))$ of its associated second-order equational presentation.
\end{thm}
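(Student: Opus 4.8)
The plan is to exhibit an isomorphism of second-order algebraic theories, which, since both $M$ and $M_{\IL(M)}$ are identity-on-objects, must itself be an identity-on-objects cartesian functor $F \colon \Mlaw \To \M(\IL(M))$ commuting with the theory functors. I would define $F$ on morphisms by sending $f \colon \tuple{m}{k} \To (n)$ to the equivalence class $[\term_f]$, extended tuple-wise to arbitrary codomains via the product decomposition $\Mlaw(\tuple{m}{k}, \tuple{n}{l}) \cong \prod_{i} \Mlaw(\tuple{m}{k}, (n_i))$. The candidate inverse $G \colon \M(\IL(M)) \To \Mlaw$ is the interpretation of terms of $\IL(M)$ as morphisms of $\Mlaw$, defined by structural induction: the operator $\omega_f$ is interpreted using $f$ itself, while variables and metavariables are interpreted through the cartesian and exponential structure of $\Mlaw$, exactly as dictated by the lemma decomposing morphisms of $\M$.

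First I would check that $G$ is well-defined, i.e. that the interpretation respects provable equality in $\IL(M)$. Since $\M(\IL(M))$ is the classifying category of $\IL(M)$, it suffices to verify that the two axiom schemes are validated by the interpretation. For $(\E 1)$, the interpretation of an elementary term $s$ coincides with $M\langle s\rangle$ (because $M$ preserves the universal cartesian and exponential structure through which elementary terms decompose), while by construction the interpretation of $\term_g$ is $g$; taking $g = M\langle s\rangle$ gives the required equality. For $(\E 2)$, the interpretation sends metasubstitution to composition, so the equation $\term_h \eq \term_g\{\dots\}$ holds precisely because $h = g \circ \langle f_1, \dots, f_l\rangle$ in $\Mlaw$ by hypothesis. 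The same correspondence, metasubstitution-as-composition together with $(\E 2)$, makes $F$ functorial, with preservation of identities following from $(\E 1)$.

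Next I would establish that $F$ and $G$ are mutually inverse. The composite $G \circ F$ is the identity because $G[\term_f]$ unfolds, via the interpretation of $\omega_f$ applied to the generic arguments $(\vvar{x}_i)\mvar{m}_i[\vvar{x}_i]$ and $\vcon{x}{n}$, to $f$ itself. The harder direction is $F \circ G = \mathrm{id}$, which amounts to proving, for every term $t$ with $\mcon{m}{k} \Sep \vcon{x}{n} \vdash t$, the derivability of $\mcon{m}{k} \Sep \vcon{x}{n} \vdash_{\IL(M)} t \eq \term_{G[t]}$. I would argue this by structural induction on $t$: the variable case is immediate from $(\E 1)$; the metavariable case combines the induction hypotheses on the parameters with the decomposition lemma and $(\E 2)$; and the operator case $t = \omega_f\big((\vvar{y}_1)s_1, \dots, (\vvar{y}_l)s_l\big)$ is settled by applying the Extended Metasubstitution congruence to the induction hypotheses and then re-expressing the resulting canonical form through $(\E 2)$.

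Finally, since both theories carry the identical cartesian and exponential structure on the shared object set $\N^*$ and $F$ is an identity-on-objects bijection on hom-sets, $F$ automatically preserves that structure and is a genuine second-order algebraic translation; commutation with the theory functors, $F \circ M = M_{\IL(M)}$, follows from $(\E 1)$, which identifies $F(M\langle s\rangle) = [\term_{M\langle s\rangle}]$ with $[s]$. I expect the main obstacle to be the operator case of the $F \circ G = \mathrm{id}$ induction: matching the inductively-defined interpretation of a deeply nested term against its $\term_{(-)}$ canonical form requires careful bookkeeping of the context extensions and metasubstitutions introduced by binding, and a delicate combined use of both axiom schemes with the congruence rule of \emph{Second-Order Equational Logic}.
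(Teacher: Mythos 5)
Your proposal is correct and takes essentially the same route as the paper's own proof in Appendix~B: your $F$ and $G$ are precisely the paper's $\nat_M$ and $\natbar_M$, defined respectively by $f \mapsto \langle[\term_f]_{\IL(M)}\rangle$ and by structural induction on terms interpreting $\omega_f$ via $f$ through the cartesian and exponential structure of $\Mlaw$, with well-definedness of $G$ checked against the axiom schemes $(\E 1)$ and $(\E 2)$, functoriality of $F$ obtained from the same two schemes, and the harder inverse direction $F \circ G = \mathrm{id}$ established by the same structural induction (variable case by $(\E 1)$, metavariable and operator cases by the congruence rule and $(\E 2)$). The only difference is one of presentation, not substance, so no further comparison is needed.
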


\begin{proof} 
See Appendix B.
\end{proof}

\section{Second-Order Syntactic Translations}
\label{so_trans}

Algebraic theories come with an associated notion of algebraic translation, their morphisms. In the second-order universe, the syntactic morphism counterpart has yet to be formalised. 
To this end, we distill a notion of second-order syntactic translation between second-order equational presentations that corresponds to the canonical notion of morphism between second-order algebraic theories. These syntactic translations provide a mathematical formalisation of notions such as encodings and transforms. The correctness of our definition is established by showing a categorical equivalence between algebraic and syntactic translations. This completes the \emph{Second-Order Syntactic Categorical Algebraic Theory Correspondence}, by which second-order algebraic theories and their algebraic translations correspond to second-order equational presentations and their syntactic translations.

\subsection{Second-Order Signature Translations}

A \emph{syntactic translation} $\trans \colon \Sigma \To \Sigma '$ between second-order signatures is given by a mapping from the operators of $\Sigma$ to the terms of $\Sigma '$ as follows:
$$ \omega \colon \tuple{m}{k} \quad \mapsto \quad \mcon{m}{k} \Sep \emptycon \vdash \trans_\omega $$
Note that the term associated to an operator has an empty variable context and that the metavariable context is determined by the arity of the operator. 
A second-order syntactic translation $\trans \colon \Sigma \To \Sigma '$ extends to a mapping from the terms of $\Sigma$ to the terms of $\Sigma '$
\begin{eqnarray*}
T_\Sigma & \To & T_{\Sigma '} \\
\Theta \Sep \Gamma \vdash t & \mapsto & \Theta \Sep \Gamma \vdash \trans(t)
\end{eqnarray*}
according to the following definition by induction on term structure:
\begin{itemize}
\item[-] $\trans(x) = x$
\item[-] $\trans \big(\mvar{m}[t_1, \dots, t_m] \big) = \mvar{m} \big[ \trans(t_1), \dots, \trans(t_m) \big]$
\item[-] $\trans \big( \omega \big( (\ivcon{x}{n}{1})t_1, \dots (\ivcon{x}{n}{k})t_k  \big) \big) = \trans_\omega \big\{ \mvar{m}_i \Def (\ivcon{x}{n}{i}) \trans(t_i) \big\}_{i \in \card{k}}$
\end{itemize}

We refer to this mapping as the \emph{translation extension} or the induced \emph{translation of terms}. 
Substituting for variables in a term followed by syntactic translation of the resulting term amounts to the same as term translation followed by substitution, and similarly for metasubstitution. This subtlety is crucial when defining morphisms of signatures as syntactic translations. We leave the simple syntactic manipulation required to prove the following result to the reader.

\begin{lem}[Compositionality]
The extension of a syntactic translation between second-order signatures commutes with substitution and metasubstitution.
\end{lem}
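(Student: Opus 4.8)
The plan is to prove both claims—commutation with substitution and with metasubstitution—by structural induction on the term $t$ being translated, using the explicit inductive definition of $\trans$ given just above the statement. The two statements to establish, for a syntactic translation $\trans \colon \Sigma \To \Sigma'$, are
$$ \trans\big( t \{ x_i \Def t_i \}_{i \in \card{n}} \big) = \trans(t) \{ x_i \Def \trans(t_i) \}_{i \in \card{n}} $$
and
$$ \trans\big( t \{ \mvar{m}_i \Def (\vvar{x}_i) t_i \}_{i \in \card{k}} \big) = \trans(t) \{ \mvar{m}_i \Def (\vvar{x}_i) \trans(t_i) \}_{i \in \card{k}} . $$
First I would fix the substitution case and run the induction over the three term-formation rules (variable, metavariable, operator). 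The variable and metavariable cases are immediate from $\trans(x) = x$ and the fact that $\trans$ commutes with metavariable application by definition, combined with the inductive hypotheses on the argument terms. The operator case is where the real content sits, and I would handle it by expanding both sides using the defining clause $\trans\big( \omega((\vvar{x}_1)s_1, \dots, (\vvar{x}_k)s_k) \big) = \trans_\omega \{ \mvar{m}_i \Def (\vvar{x}_i)\trans(s_i) \}_{i \in \card{k}}$ and the inductive definition of variable substitution on operator terms.

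The hard part will be the operator case, and specifically reconciling the order in which substitution and metasubstitution are performed. On the left-hand side, we first substitute for variables inside the operator term and then apply $\trans$, which turns the head operator into a metasubstitution by $\trans_\omega$; on the right-hand side we first translate (producing the metasubstitution by $\trans_\omega$) and then substitute. The two expressions will only be forced to agree by invoking the Substitution--Metasubstitution Lemma recalled in the substitution calculus section, which states precisely that variable-substitution commutes appropriately with metavariable-substitution. I would therefore reduce the operator case to an application of that lemma together with the inductive hypotheses on the subterms $s_i$, taking care that the bound variables $\vvar{x}_i$ introduced by the operator are fresh with respect to the substituted terms $t_i$ (which is exactly the capture-avoidance side-condition built into the substitution definition). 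The metasubstitution case is structurally analogous: the variable case is trivial, the metavariable case unfolds the nested substitution clause of Definition (Metasubstitution) and uses the inductive hypothesis, and the operator case again pushes the head metasubstitution past $\trans_\omega$ using the associativity of metasubstitution (Metasubstitution Lemma I).

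In both inductions the routine bookkeeping—tracking contexts, indices, and the freshness conditions—is the tedious but mechanical part, so I would suppress it and present only the operator/metavariable clauses where a genuine lemma is invoked. The overall structure is thus: state the two equations to be proved; note the two trivial base-type clauses; dispatch the metavariable clause by the inductive hypothesis and the definition of $\trans$; and dispatch the operator clause by the relevant Metasubstitution Lemma (Substitution--Metasubstitution for the first equation, associativity for the second), after rewriting both sides via the inductive definitions. Since the paper explicitly delegates this to the reader as ``simple syntactic manipulation,'' the expected deliverable is a clean induction skeleton that makes visible exactly which of the Metasubstitution Lemmas of Appendix A is doing the work in each clause.
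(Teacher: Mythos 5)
Your induction skeleton (substitution half first, then metasubstitution, with all content in the operator clauses) is the right one, but the key reduction in the hardest case does not work as stated, and this is a genuine gap. In the operator clause of the substitution half, writing $\mvar{m}_j$ for the metavariables of $\trans_\omega$, the inductive hypothesis leaves you needing
\[
\big(\trans_\omega\big\{\mvar{m}_j \Def (\vvar{y}_j)\trans(s_j)\big\}_{j}\big)\big\{x_i \Def \trans(t_i)\big\}_{i}
\;=\;
\trans_\omega\big\{\mvar{m}_j \Def (\vvar{y}_j)\big(\trans(s_j)\{x_i \Def \trans(t_i)\}_{i}\big)\big\}_{j}\,,
\]
that is, the variable substitution must be pushed \emph{inside} the metasubstituens $\trans(s_j)$, which contain the $x_i$ free. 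The Substitution--Metasubstitution Lemma of Appendix A does not cover this configuration: it is stated for $s_j$ in a context $\Gamma, \vvar{y}_j$ disjoint from the substituted variables $x_1,\dots,x_n$, and it fails if that discipline is ignored. Indeed, the only instantiation that makes its two sides resemble the two sides above ($t \Def \trans_\omega$, $t_i \Def \trans(t_i)$, $s_j \Def \trans(s_j)$) collapses, because $\trans_\omega$ has no free variables and $\trans(t_i)$ has no $\mvar{m}_j$, to the assertion that the outer substitution $\{x_i \Def \trans(t_i)\}_i$ acts \emph{vacuously} on $\trans_\omega\{\mvar{m}_j \Def (\vvar{y}_j)\trans(s_j)\}_j$ --- which is false. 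Concretely, for $\omega \colon (0)$ with $\trans_\omega = \mvar{m}_1[\,]$ one has $\trans(\omega(x)) = x$, and that reading would equate $x$ with $\trans(u)$ under the substitution $\{x \Def \trans(u)\}$. So the displayed identity is not an application of that lemma; your proof of the substitution half rests on a lemma that does not apply where it is needed.

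The repair is to prove the displayed identity as a separate auxiliary lemma: either by its own induction on the (variable-closed) term $\trans_\omega$, whose metavariable clause is exactly where the Second-Order Substitution Lemma of Appendix A enters, or by first parameterising the variables $x_i$ into $0$-ary metavariables, after which the identity \emph{does} become an instance of Metasubstitution Lemma I. By contrast, your handling of the metasubstitution half's operator clause is sound: the required commutation is a legitimate instance of Metasubstitution Lemma I, since that lemma (unlike the Substitution--Metasubstitution Lemma) is stated in the strong form in which the first family of metasubstituens may contain the metavariables replaced by the second, and the ambient metavariables do not occur in $\trans_\omega$. One further remark: the metavariable clause of the metasubstitution half is not closed by the inductive hypothesis alone; unfolding $\mvar{n}_l[s_1,\dots,s_{m_l}]\{\mvar{n}_i \Def (\vvar{x}_i)t_i\}_i = t_l\{x^{(l)}_j \Def s_j\{\mvar{n}_i \Def (\vvar{x}_i)t_i\}_i\}_j$ produces a variable substitution into $t_l$, which is not a subterm of the term being inducted on, so you must appeal to the substitution half already established for \emph{all} terms. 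Your ordering of the two halves makes this legitimate, but the dependency should be made explicit.
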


\pagebreak

\begin{ex}
\begin{itemize}
\item[]
\item[(1)] The simplest way to translate a second-order signature is to map it to itself. Every operator can be mapped to the $\textquoteleft$simplest' term induced by that operator. More formally, for $\Sigma$ a second-order signature, the mapping
\begin{eqnarray*}
\omega \colon n_1, \dots, n_l & \mapsto & \mcon{n}{l} \Sep \emptycon \vdash \omega \big( (\vvar{y}_1) \mvar{n}_1 [\vvar{y}_1], \dots, (\vvar{y}_n) \mvar{n}_l [\vvar{y}_l] \big)
\end{eqnarray*}
defines a second-order syntactic translation. We will later show that this defines the \emph{identity} syntactic translation.
\item[(2)] It is well-known that the basic mono-sorted $\lambda$-calculus may be used to model simple arithmetic structures and operations. For instance, \emph{Church numerals} are a way of formalising natural numbers via the $\lambda$-calculus. The Church numeral $n$ is roughly a function which takes a function $f$ as argument and returns the $n$-th composition of $f$. The encoding of basic operations on natural numbers, such as addition and multiplication, via Church's $\lambda$-calculus can be formalised as a syntactic translation as follows:
\begin{eqnarray*}
\tt{add} \colon (0,0) & \mapsto & \mvar{m} \colon [0], \mvar{n} \colon [0] \Sep \emptycon \vdash \lambda fx.\mvar{m} f (\mvar{n} f x) \\
\tt{mult} \colon (0,0) & \mapsto & \mvar{m} \colon [0], \mvar{n} \colon [0] \Sep \emptycon \vdash \lambda f.\mvar{m} (\mvar{n} f)
\end{eqnarray*}
\end{itemize}
\end{ex}

\subsection{Second-Order Equational Translations}

A \emph{syntactic translation} $\trans \colon \E \To \E '$ between second-order equational presentations $\E = (\Sigma,E)$ and $\E '=(\Sigma ',E')$ is a signature translation which preserves the equational theory of $\E$ in the sense that axioms are mapped to theorems. Formally, it is a syntactic translation $\trans \colon \Sigma \To \Sigma '$ such that, for every axiom $\Theta \Sep \Gamma \vdash_\E s \eq t$ in $E$, the judgement $\Theta \Sep \Gamma \vdash_{\E '} \trans (s) \eq \trans (t)$ is derivable from $E'$. 
The condition that only axioms are required to be mapped to theorems is strong enough to ensure that all theorems of $\E$ are mapped to theorems of $\E '$.

\begin{lem}
The extension of a syntactic translation between second-order equational presentations preserves second-order equational derivability.
\end{lem}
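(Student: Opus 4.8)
The plan is to prove that if $\Theta \Sep \Gamma \vdash_\E s \eq t$ is derivable in Second-Order Equational Logic, then $\Theta \Sep \Gamma \vdash_{\E'} \trans(s) \eq \trans(t)$ is derivable, by induction on the structure of the derivation of $s \eq t$ from the axioms of $\E$. The base cases and inductive steps are organised around the rules of Figure~1: the axiom rule, the three equivalence rules (reflexivity, symmetry, transitivity), and the extended metasubstitution rule. Since a syntactic translation $\trans \colon \E \To \E'$ is by definition a signature translation that maps every axiom of $\E$ to a theorem of $\E'$, the axiom case holds by hypothesis, and the equivalence rules are handled trivially: $\trans$ applied to a reflexivity instance $\Theta \Sep \Gamma \vdash t \eq t$ gives $\Theta \Sep \Gamma \vdash \trans(t) \eq \trans(t)$, and symmetry and transitivity are preserved because the translated judgements inherit the same relational structure.

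The one substantive step is the extended metasubstitution rule. Here one assumes, by the inductive hypothesis, that $\mcon{m}{k} \Sep \Gamma \vdash \trans(s) \eq \trans(t)$ and $\Theta \Sep \Delta, \vvar{x}_i \vdash \trans(s_i) \eq \trans(t_i)$ are derivable from $E'$, and one must show that
$$ \Theta \Sep \Gamma, \Delta \vdash \trans\big(s \{ \mvar{m}_i \Def (\vvar{x}_i)s_i \}_{i \in \card{k}}\big) \eq \trans\big(t \{ \mvar{m}_i \Def (\vvar{x}_i)t_i \}_{i \in \card{k}}\big) $$
is derivable. The crux is to invoke the Compositionality Lemma: since the extension of $\trans$ commutes with metasubstitution, we have $\trans\big(s \{ \mvar{m}_i \Def (\vvar{x}_i)s_i \}\big) = \trans(s)\{ \mvar{m}_i \Def (\vvar{x}_i)\trans(s_i) \}$, and similarly for $t$. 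Once both translated terms are rewritten in this metasubstituted form, the desired equation is exactly an instance of the extended metasubstitution rule applied to the two inductive hypotheses, and is therefore derivable in $\E'$.

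The main obstacle I expect is purely bookkeeping rather than conceptual: one must be careful that the Compositionality Lemma is applied with the contexts and binding arities matching correctly, so that the commutation $\trans \circ (\text{metasubst}) = (\text{metasubst}) \circ \trans$ is legitimately available at the instance required by the rule. In particular one should confirm that $\trans$ leaves variable and metavariable contexts unchanged (which follows from the clauses $\trans(x) = x$ and $\trans(\mvar{m}[\dots]) = \mvar{m}[\trans(\dots)]$), so that the translated hypotheses live in precisely the contexts needed to fire the extended metasubstitution rule. Granting the Compositionality Lemma, no genuinely hard calculation remains; the entire argument is a structural induction in which every logical rule is matched by a corresponding rule instance in $\E'$, with compositionality supplying the one nontrivial identity.
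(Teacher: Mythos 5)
Your proposal is correct and takes essentially the same approach as the paper's proof: a structural induction on derivations whose only substantive case is the extended metasubstitution rule, which you handle exactly as the paper does --- apply that rule in $\E'$ to the translated premises supplied by the inductive hypotheses, then rewrite both sides via the Compositionality Lemma. The paper merely compresses the argument by noting that only the extended metasubstitution rule needs checking (the axiom and equivalence cases being immediate), so the two proofs coincide.
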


\begin{proof}
One needs to only check the extended metasubstitution derivation rule of Second-Order Equational Logic (Figure 1). Indeed, having
$$ \mcon{m}{k} \Sep \Gamma \vdash_{\E '} \trans(s) \eq \trans(t) \quad \textrm{and} \quad \Theta \Sep \Gamma ', \ivcon{x}{m}{i} \vdash_{\E '} \trans(s_i) \eq \trans(t_i) $$
for $1 \leq i \leq k$ implies
$$ \Theta \Sep \Gamma, \Gamma ' \vdash_{\E '} \trans(s) \big\{ \mvar{m}_i \Def (\ivcon{x}{m}{i}) \trans(s_i) \big\}_{i \in \card{k}} \eq \trans(t) \big\{ \mvar{m}_i \Def (\ivcon{x}{m}{i}) \trans(t_i) \big\}_{i \in \card{k}} $$
by extended metasubstitution, which, by the Compositionality Lemma (Lemma ?), further gives
$$ \Theta \Sep \Gamma , \Gamma ' \vdash_{\E '} \trans \big( s \big\{ \mvar{m}_i \Def (\ivcon{x}{m}{i}) \trans(s_i) \big\}_{i \in \card{k}} \big) \eq \trans \big( t \big\{ \mvar{m}_i \Def (\ivcon{x}{m}{i}) \trans(s_i) \big\}_{i \in \card{k}} \big) . $$
\end{proof}

\textbf{Syntactic translation composition.} The \emph{composite} of equational translations $\trans \colon \E_1 \To \E_2$ and $\trans ' \colon \E_2 \To \E_3$ is the translation $(\trans ' \circ \trans) \colon \E_1 \To \E_3$ defined by mapping an operator $\omega$ of $\E_1$ to the term $\trans '(\trans_\omega)$ of $\E_3$. Its extension on a term $t$ is simply $\trans '(\trans(t))$,
which can be easily verified by structural induction.
Because $\trans$ and $\trans '$ preserve equational derivability, the equation $\Theta \Sep \Gamma \vdash \trans ' \big( \trans(s)\big) \eq \trans '\big( \trans(t)\big)$ is a theorem of $\E_3$ whenever $\Theta \Sep \Gamma \vdash s \eq t$ is an axiom of $\E_1$, and thus, the composite $(\trans' \circ \trans)$ is an equational translation. 
Note that composition of equational translations is an associative operation: 
$\big( (\trans'' \circ \trans') \circ \trans \big)(\omega) \squad = \squad (\trans'' \circ \trans') (\trans_\omega) \squad = \squad \trans'' \big( \trans'(\trans_\omega) \big) \squad = \squad \trans'' \big( (\trans'\circ\trans) (\omega) \big) \squad = \squad \big( \trans'' \circ (\trans'\circ\trans) \big) (\omega) $,
where of course all composites are assumed to be well-defined. \\

\textbf{The syntactic identity translation.} For a second-order equational presentations $\E$, the \emph{syntactic identity translation} $\trans^\E \colon \E \To \E$ is defined by mapping an operator $ \omega \colon \tuple{m}{k} $ to the term
\begin{equation*}
\begin{multlined}
\mcon{m}{k} \Sep \emptycon \vdash \\ \omega \big( (\ivcon{x}{m}{1}) \mvar{m}_1 [\ivcon{x}{m}{1}], \dots, (\ivcon{x}{m}{k}) \mvar{m}_1 [\ivcon{x}{m}{k}] \big) \quad . 
\end{multlined}
\end{equation*}
The extension of $\trans^\E$ on terms is just the identity mapping, which is again easily verified by structural induction. This immediately implies that an axiom $\Theta \Sep \Gamma \vdash_\E s \eq t $ is mapped to itself under $\trans^\E$, making it an equational translation.

Note that $\trans^\E$ is indeed the identity in the space of equational translations and their composition, since for any $\trans \colon \E_1 \To \E_2$ and $\omega \colon \tuple{m}{k}$ an operator of $\E_1$, we have
\begin{eqnarray*}
\trans^{\E_2} \big( \trans (\omega_1) \big) & = & \trans(\omega_1) \quad , \\
\textrm{and}\hspace{100pt} & \quad & \\
\trans \big( \trans^{\E_1} (\omega) \big) & = & \trans \big( \omega \big( \dots, (\ivcon{x}{m}{i}) \mvar{m}_i [\ivcon{x}{m}{i}] ,\dots \big) \big) \\
& = & \trans_\omega \big\{ \mvar{m}_i \Def (\ivcon{x}{m}{i}) \trans \big( \mvar{m}_i [\ivcon{x}{m}{i}] \big) \big\}_{i \in \card{k}} \\
& = & \trans_\omega \big\{ \mvar{m}_i \Def (\ivcon{x}{m}{i})  \mvar{m}_i [\ivcon{x}{m}{i}] \big\}_{i \in \card{k}} \\
& = & \trans_\omega \quad .
\end{eqnarray*}

\textbf{The category of second-order equational presentations.}  We denote by $\SOEP$ the category of second-order equational presentations and second-order syntactic translations. The previous discussion surrounding composition and identity ensures that this is a well-defined category.

\subsection{Syntactic and Algebraic Translations}

A syntactic translation $\trans \colon \E \To \E '$  of second-order equational presentations $\E = (\Sigma,E)$ and $\E ' = (\Sigma ',E')$ induces the algebraic translation
$$ \M(\trans) \colon \M(\E) \To \M(\E ') $$
mapping $\big\langle [t_1]_\E, \dots, [t_l]_\E \big\rangle$ to $\big\langle [\trans(t_1)]_{\E '}, \dots, [\trans(t_l)]_{\E '} \big\rangle$. Note that the induced algebraic translation $\M(\trans)$ is essentially specified by the extension of the syntactic translation $\trans$ on terms. This definition respects equivalence since the extension of $\trans$ preserves equational derivability, and thus $\Theta \Sep \Gamma \vdash_\E s \eq t$ implies $\Theta \Sep \Gamma \vdash_{\E '} \trans(s) \eq \trans(t)$. From the Compositionality Lemma (Lemma 6.1), we know that extensions of syntactic translations commute with substitution and metasubstitution, which yields functoriality of $\M(\trans)$. Since translation extensions act as the identity on elementary terms, the functor $\M(\trans)$ commutes with the theories $M_\E \colon \M \To \M(\E)$ and $M_{\E '} \colon \M \To \M(\E ')$, making it indeed an algebraic translation. 

This development gives a functor 
\begin{eqnarray*}
\M(-) \quad \colon \quad \SOEP &\To& \SOAT \\
\E & \mapsto & M_\E \colon \M \To \M(\E) \\
\trans \colon \E \To \E ' & \mapsto & \M(\trans) \colon \M(\E) \To \M(\E ')
\end{eqnarray*}
mapping an equational presentation to its classifying theory, and a syntactic translation to its induced algebraic translation. Since the extension of the syntactic identity translation $\trans^\E \colon \E \To \E$ is the identity on terms, it is mapped under $\M(-)$ to the identity algebraic translation $\M(\trans^\E)$ mapping $\big\langle \dots,[t]_\E ,\dots \big\rangle$ to itself. Also, given syntactic translations $\trans \colon \E_1 \To \E_2$ and $\trans ' \colon \E_2 \To \E_3$, we have 
$$ \M(\trans ' \circ \trans)\big( [t]_{\E_1} \big) = \big[ (\trans ' \circ \trans)(t) \big]_{\E_3} = \big[ \trans ' \big( \trans(t) \big) \big]_{\E_3} = \M(\trans ') \big( \big[ \trans(t) \big]_{\E_2} \big) = \big( \M(\trans ') \circ \M(\trans) \big) \big( [t]_{\E_1} \big) \quad , $$
which establishes functoriality of $\M(-)$. \\

In the other direction, an algebraic translation $F \colon \Mlaw \To \Mlaw '$ between second-order algebraic theories $M \colon \M \To \Mlaw$ and $M' \colon \M \To \Mlaw '$ induces the syntactic translation 
$$ \IL(F) \colon \IL(M) \To \IL(M') \quad , $$
which, for a morphism $f \colon \tuple{m}{k} \To (n)$ of $\Mlaw$, maps the operator $\omega_f$ of $\IL(M)$ to the term
$$ \mcon{m}{k}, \mvar{n}_1 \colon [0], \dots, \mvar{n}_n \colon [0] \Sep \emptycon \vdash \term_{Ff} \big\{ x_i \Def \mvar{n}_i[] \big\}_{i \in \card{n}} \quad , $$
where we recall that 
$$ \term_{Ff} = \omega_{Ff} \big( (\ivcon{x}{m}{1}) \mvar{m}_1 \big[\ivcon{x}{m}{1}\big], \dots, (\ivcon{x}{m}{k}) \mvar{m}_k \big[\ivcon{x}{m}{k}\big], \vcon{x}{n} \big) \quad . $$

We verify that $\IL(F)$ is indeed an equational translation by looking at the induced translations on the terms of the left- and right-hand side of the axioms of $\IL(M)$. Recall from Section 5.1 that these axioms are given by $(\E 1)$ and $(\E 2)$. Consider $(\E 1)$, which states that for $\langle s \rangle \colon \tuple{m}{k} \To (n)$ of $\M$, we have the equation $\mcon{m}{k} \Sep \vcon{x}{n} \vdash s \eq \term_{M\langle s \rangle}$ in $\IL(M)$. Since $s$ is elementary, its image under the translation $\IL(F)$ is also given by $M\langle s \rangle$. On the other hand, note that $\IL(f)(\term_{M\langle s \rangle}) = \term_{(F \circ M) \langle s \rangle} = \term_{M' \langle s \rangle}$. From the axiom $(\E 1)$ of $\IL(M')$, we have that $s \eq \term_{M \langle s \rangle}$, and therefore
$ \mcon{m}{k} \Sep \vcon{x}{n} \vdash \IL(F)(s) \eq \IL(F)(\term_{M \langle s \rangle}) $
in $\IL(M')$. Similarly, for the axiom $(\E 2)$ of $\IL(M)$, and in the notation of Section 5.1, we have that $\IL(F)(\term_h) = \term_{Fh}$, and on the other hand:
\begin{align*}
& \quad \IL(F) \Big( \term_g \big\{ \mvar{m}_i \Def (\vvar{x_i}) \term_{f_i} \big\}_{i \in \card{l}} \Big) \\
= & \quad \IL(F) \big( \term_g \big) \big\{ \mvar{m}_i \Def (\vvar{x_i}) \IL(F)\big(\term_{f_i}\big) \big\}_{i \in \card{l}}  \\
= &\quad \term_{Fg} \big\{ \mvar{m}_i \Def (\vvar{x_i}) \term_{Ff_i} \big\}_{i \in \card{l}} \quad .
\end{align*}
Hence, the image of axiom $(\E 2)$ of $\IL(M)$ under the translation $\IL(F)$ is just axiom $(\E 2)$ of $\IL(M')$. This makes $\IL(F)$ indeed an equational translation. 

We have essentially defined the functor
\begin{eqnarray*}
\IL(-)\quad \colon \quad \SOAT & \To & \SOEP \\
M \colon \M \To \Mlaw & \mapsto & \IL(M) \\
F \colon \Mlaw \To \Mlaw ' & \mapsto & \IL(F) \colon \IL(M) \To \IL(M')
\end{eqnarray*}
mapping a second-order algebraic theory to its internal language, and an algebraic translation to its induced syntactic translation. 
Some simple syntactic manipulation is needed to show that $\IL(-)$ is functorial. Given a morphism $f \colon \tuple{m}{k} \To (n)$ in $\Mlaw$, the translation $\IL(id_\Mlaw)$ maps the operator $\omega_f$ of $\IL(M)$ to the term
$$ \mcon{m}{k}, \mvar{n}_1 \colon [], \dots, \mvar{n}_n \colon [] \Sep \emptycon \vdash \term_f \big\{ x_i \Def \mvar{n}_i[] \big\}_{i \in \card{n}} \quad ,$$
which is the image of $\omega_f$ under the syntactic identity translation $\trans^{\IL(M)}$. Moreover, given algebraic translations $F \colon \Mlaw_1 \To \Mlaw_2$ and $G \colon \Mlaw_2 \To \Mlaw_3$ and a morphism $g \colon \tuple{n}{j} \To (l)$, the image of $\omega_g$ of $\IL(M_1)$ under the composite translation $\IL(G) \circ \IL(F)$ is given by the term
\begin{eqnarray*}
 \mcon{n}{j}, \mvar{l}_1 \colon [], \dots, \mvar{l}_l \colon [] \Sep \emptycon & \vdash & \IL(G) \big( \term_{Fg} \big\{ x_i \Def \mvar{l}_i [] \big\}_{i \in \card{l}} \big) \\
& = & \IL(G) (\omega_{Fg}) \\
& = & \term_{(G \circ F)(g)}\big\{ x_i \Def \mvar{l}_i [] \big\}_{i \in \card{l}} \\
& = & \IL(G \circ F) (\omega_g) \quad .
\end{eqnarray*}

\subsection{Second-Order Syntactic Categorical Algebraic Theory Correspondence}

Second-order syntactic translations embody the mathematical machinery that enables us to compare second-order equational presentations at the syntactic level without having to revert to their categorical counterparts. In particular, the question of when two presentations are \emph{essentially the same} can now be answered via the notion of syntactic isomorphism. A second-order syntactic translation $\trans \colon \E \To \E '$ is an \emph{isomorphism}, if it has an inverse $\bar{\trans}$ yielding the syntactic identity translation on $\E$ (respectively $\E '$) when composed to the left (respectively right) with $\trans$. 
This is used to show the second direction of the invertibility of constructing theories from presentations, and vice versa. More precisely, we prove that every second-order equational presentation is isomorphic to the second-order equational presentation of its associated algebraic theory. 

Keeping this objective in mind, define, for a given second-order equational presentation $\E$ with classifying algebraic theory $M_\E \colon \M \To \M(\E)$, the \emph{natural translation}
$ \tnat_\E \colon \E \To \IL(M_\E) $
by mapping an operator $\omega \colon \tuple{m}{k}$ of $\E$ to the term 
$ \mcon{m}{k} \Sep \emptycon \vdash \term_{\big\langle[\trans^\E_\omega]_\E \big\rangle}$, 
where we remind the reader that $\trans^\E(\omega)$ is the image of $\omega$ under the identity translation $\trans^\E$, and hence $\big\langle[\trans^\E(\omega)]_\E \big\rangle \colon \tuple{m}{k} \To (0)$ is a morphism of $\M(\E)$. 
The fact that the natural translation $\tnat_\E$ is an equational translation relies on the following special property of its extension on terms.

\begin{lem}
For any second-order equational presentation $\E$, the extension of the natural translation $\tnat_\E \colon \E \To \IL(M_\E)$ on a term
$$ \mcon{m}{k} \Sep \vcon{x}{n} \vdash s $$
of $\E$ is given by the term
$$ \mcon{m}{k} \Sep \vcon{x}{n} \vdash \term_{\langle [s]_\E \rangle} $$
of $\IL(M_\E)$.
\end{lem}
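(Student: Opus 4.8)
Throughout, I read ``is given by'' as provable equality in $\IL(M_\E)$, so that the goal is $\mcon{m}{k} \Sep \vcon{x}{n} \vdash_{\IL(M_\E)} \tnat_\E(s) \eq \term_{\langle [s]_\E \rangle}$, the available axioms being exactly $(\E 1)$ and $(\E 2)$. This is also the form needed downstream to see that $\tnat_\E$ is an equational translation: if $s \eq t$ is an axiom of $\E$ then $[s]_\E = [t]_\E$ in $\M(\E)$, so the two right-hand sides coincide as terms. The plan is to proceed by induction on the structure of $s$, keeping both contexts arbitrary so that in each step the immediate subterms $t_j$ may be taken over the enlarged variable contexts created by the binders of $s$, with induction hypotheses $\tnat_\E(t_j) \eq \term_{\langle[t_j]_\E\rangle}$.

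The elementary cases are immediate. When $s$ is a variable — and more generally whenever $s$ is elementary — the extension $\tnat_\E$ acts as the identity, so $\tnat_\E(s) = s$; since $\langle[s]_\E\rangle = M_\E\langle s\rangle$ lies in the image of $\M$, axiom $(\E 1)$ gives $s \eq \term_{\langle[s]_\E\rangle}$ directly. For $s = \mvar{m}_i[t_1,\dots,t_{m_i}]$, I would invoke the decomposition of such a morphism established earlier for $\M$ (which holds verbatim in $\M(\E)$, as the latter carries the same cartesian and exponential structure), namely $\langle[s]_\E\rangle = \varsigma_{m_i,n}\circ\langle\pi_i,[t_1]_\E,\dots,[t_{m_i}]_\E\rangle$. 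Applying $(\E 2)$ to this composite rewrites $\term_{\langle[s]_\E\rangle}$ as a metasubstitution into $\term_{\varsigma_{m_i,n}}$; since $\varsigma_{m_i,n}$ and $\pi_i$ are elementary, $(\E 1)$ lets me replace their $\term$'s by the explicit elementary representatives computed earlier, whereupon the metasubstitution collapses to $\mvar{m}_i[\term_{\langle[t_1]_\E\rangle},\dots,\term_{\langle[t_{m_i}]_\E\rangle}]$, which by the induction hypotheses and congruence equals $\tnat_\E(s)$.

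The operator case $s = \omega\big((\vvar{z}_1)t_1,\dots,(\vvar{z}_p)t_p\big)$, with $\omega \colon (q_1,\dots,q_p)$, is the crux. From the definition of the extension and of $\tnat_\E$ on operators, together with the fact that $s$ is obtained from $\trans^\E_\omega$ by metasubstituting $(\vvar{z}_j)t_j$ for its metavariables, one computes $\tnat_\E(s) = \omega_{\langle[\trans^\E_\omega]_\E\rangle}\big((\vvar{z}_1)\tnat_\E(t_1),\dots,(\vvar{z}_p)\tnat_\E(t_p)\big)$, so by the induction hypotheses $\tnat_\E(s) \eq \omega_{\langle[\trans^\E_\omega]_\E\rangle}\big((\vvar{z}_1)\term_{\langle[t_1]_\E\rangle},\dots,(\vvar{z}_p)\term_{\langle[t_p]_\E\rangle}\big)$. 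On the other side I would realise the class $\langle[s]_\E\rangle \colon \tuple{m}{k} \To (n)$ as the composite $\big(\e{(0)^n}{\langle[\trans^\E_\omega]_\E\rangle}\big)\circ\langle[t_1]_\E,\dots,[t_p]_\E\rangle$, whose outer factor is the image of the ``pure operator'' morphism $\langle[\trans^\E_\omega]_\E\rangle \colon (q_1,\dots,q_p)\To(0)$ under the metaweakening functor $\e{(0)^n}{(-)}$ on $\M(\E)$, which threads the ambient context $\vcon{x}{n}$ through the binders. Checking that this composite really is $\langle[s]_\E\rangle$ is a direct unfolding of composition (metasubstitution) in $\M(\E)$, up to the evident rearrangement of bound and free variables; applying $(\E 2)$ then rewrites $\term_{\langle[s]_\E\rangle}$ as a metasubstitution into $\term_{\e{(0)^n}{\langle[\trans^\E_\omega]_\E\rangle}}$.

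The main obstacle is reconciling these two expressions. The operator $\omega_{\e{(0)^n}{\langle[\trans^\E_\omega]_\E\rangle}}$ binds $q_j + n$ variables in its $j$-th argument and carries $\vcon{x}{n}$ as trailing variable-arguments, whereas $\omega_{\langle[\trans^\E_\omega]_\E\rangle}$ binds only $q_j$ and leaves $\vcon{x}{n}$ free. These two operators are ``essentially the same'' in the sense of the Remark on redundant operators following the internal-language construction: they are related by the exponential adjunction $\e{(0)^n}{(-)} \dashv \big((-)\times(0)^n\big)$ — that is, by parameterisation — and the identity equating them is derivable from $(\E 1)$ and $(\E 2)$. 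Establishing this operator equivalence in the present generality, and checking that it reinstates the trailing variables as the free occurrences of $\vcon{x}{n}$ inside each $\term_{\langle[t_j]_\E\rangle}$, is the delicate bookkeeping; once it is in place, both sides reduce to a common form and the induction closes.
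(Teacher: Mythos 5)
Your reading of ``is given by'' as provable equality in $\IL(M_\E)$ is correct (it is what the paper's subsequent use of the lemma requires), and much of your argument is sound; note that the paper states this lemma without any proof, so your attempt can only be judged on its own terms. The elementary case is indeed an immediate instance of $(\E 1)$, and the metavariable case works as you describe: the decomposition $\langle[s]_\E\rangle = \varsigma_{m_i,n} \circ \langle \pi_i, [t_1]_\E, \dots, [t_{m_i}]_\E \rangle$ is valid in $\M(\E)$, and---crucially---since no binding occurs there, the subterms $t_j$ live in the same context as $s$, so all of their variables get abstracted in the $(\E 2)$ instance you invoke, and the computation closes via $(\E 1)$, the induction hypotheses, and the derived congruence rules of Second-Order Equational Logic.

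The gap is in the operator case, at the step you defer as ``delicate bookkeeping''; that step is not bookkeeping but the entire mathematical content of the case. What you need is, for $g = \langle[\trans^\E_\omega]_\E\rangle \colon (q_1,\dots,q_p) \To (0)$, the derivability of
$$ \term_{\e{(0)^n}{g}} \squad \eq \squad \term_g \big\{ \mvar{n}_j \Def (\vvar{z}_j)\mvar{p}_j[\vvar{z}_j, x_1, \dots, x_n] \big\}_{j \in \card{p}} $$
in metavariable context $\mvar{p}_1 \colon [q_1{+}n], \dots, \mvar{p}_p \colon [q_p{+}n]$ and variable context $\vcon{x}{n}$, where $\mvar{n}_j \colon [q_j]$ are the metavariables of $\term_g$. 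You justify this by appeal to the Remark on redundant operators, but that Remark (i) is itself stated without proof in the paper, and (ii) is not the statement you need: it compares $\omega_f$ with $\omega_{\ev_n \circ (f \times (0)^n)}$, where the ambient variables become trailing nullary arguments of a single operator, whereas you must compare $\omega_{\e{(0)^n}{g}}$ with $\omega_g$, where the ambient variables are threaded through the binders of every argument. These are different situations: the exponential transpose of $\e{(0)^n}{g}$ is not $g$ but $g \circ e$, where $e \colon (q_1{+}n, \dots, q_p{+}n, 0^n) \To (q_1, \dots, q_p)$ is the evaluation map of the exponential $(q_j{+}n) = \e{(0)^n}{(q_j)}$. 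A complete derivation therefore needs: the Remark itself (provable by applying $(\E 2)$ to the factorisation of $\ev_n \circ (f \times (0)^n)$ through $\ev_n$ and the projections, then $(\E 1)$ and the derived substitution rule); a second $(\E 2)$ application to the factorisation $g \circ e$, with $(\E 1)$ identifying $\term_{e_j}$ with the elementary term $\mvar{p}_j[\vvar{z}_j, \mvar{x}_1[], \dots, \mvar{x}_n[]]$; and a de-parameterisation step $\{\mvar{x}_i \Def x_i\}_{i \in \card{n}}$ relying on the derived substitution rule and the Substitution--Metasubstitution Lemma. The variable re-orderings you mention only in passing ($\langle[t_j]_\E\rangle$ has codomain $(n{+}q_j)$ while the metaweakened domain component is $(q_j{+}n)$) require the same $(\E 1)$/$(\E 2)$ treatment. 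All of these facts are true, so your outline is completable along exactly the route you sketch; but as written, the crux of the induction is asserted rather than proved, and the citation you offer does not discharge it.
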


Given an axiom $\mcon{m}{k} \Sep \vcon{x}{n} \vdash t \eq t'$ of $\E$ then, the operators $\omega_{\langle [t]_\E \rangle}$ and $\omega_{\langle [t']_\E \rangle}$ are obviously equal, which makes the terms $\term_{\langle [t]_\E \rangle}$ and $\term_{\langle [t']_\E \rangle}$ of $\IL(M_\E)$ syntactically equal. This implies the equational derivability of
$$ \mcon{m}{k} \Sep \vcon{x}{n} \vdash_{\IL(M_\E)} \term_{\langle [t]_\E \rangle} \eq \term_{\langle [t']_\E \rangle} \quad , $$
which, together with Lemma 6.4, yields
$$ \mcon{m}{k} \Sep \vcon{x}{n} \vdash_{\IL(M_\E)} \tnat_\E(t) \eq \tnat_\E (t') \quad , $$
making $\tnat_\E$ indeed an equational translation. 

In the other direction, define the \emph{opposite natural translation} 
$\tnatbar_\E \colon \IL(M_\E) \To \E $ 
by mapping, for a morphism $\langle [t]_\E \rangle \colon \tuple{m}{k} \To (n)$ of $\M(\E)$, the operator $\omega_{\langle [t]_\E \rangle} \colon (m_1, \dots, m_k, 0^n)$ to 
$$ \mcon{m}{k}, \mvar{n}_1 \colon [0], \dots, \mvar{n}_n \colon [0] \Sep \emptycon \vdash t \big\{ x_i \Def \mvar{n}_i[] \big\}_{i \in \card{n}} \quad .  $$

This mapping is well-defined in the sense that it respects the equivalence with respect to $\E$, as from Second-Order Equational Logic we know that the operation of substitution in extended metavariable context is a congruence.  
To verify that, according to this definition, $\tnatbar_\E$ is really an equational translation, one needs to show that the two axioms $(\E 1)$ and $(\E 2)$ of $\IL(M_\E)$ are mapped under $\tnatbar_\E$ to theorems of $\E$. A similar argument to the verification of the preservation of equations of an induced syntactic translation can be used, and so we skip over the details here. 

\begin{thm}[Second-order presentation/theory correspondence]
Every second-order equational presentation $\E$ is isomorphic to the second-order equational presentation $\IL(M_\E)$ of its associated algebraic theory $M_\E \colon \M \To \M(\E)$.
\end{thm}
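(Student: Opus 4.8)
The plan is to exhibit the already-constructed natural translation $\tnat_\E \colon \E \To \IL(M_\E)$ and its opposite $\tnatbar_\E \colon \IL(M_\E) \To \E$ as mutually inverse morphisms of $\SOEP$. Since a syntactic translation is determined, as a morphism of $\SOEP$, by the provable-equality classes of the terms it assigns to operators, it suffices to check on operators that the two composites $\tnatbar_\E \circ \tnat_\E$ and $\tnat_\E \circ \tnatbar_\E$ coincide with the syntactic identity translations $\trans^\E$ and $\trans^{\IL(M_\E)}$ respectively. Both $\tnat_\E$ and $\tnatbar_\E$ have already been verified to be genuine equational translations, so the remaining content is precisely this round-trip computation.

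First I would treat $\tnatbar_\E \circ \tnat_\E$, which I expect to hold on the nose. Taking an operator $\omega \colon \tuple{m}{k}$ of $\E$, the translation $\tnat_\E$ sends it to $\term_{\big\langle [\trans^\E_\omega]_\E \big\rangle}$, and since the underlying morphism $\big\langle [\trans^\E_\omega]_\E \big\rangle \colon \tuple{m}{k} \To (0)$ has codomain $(0)$ this term carries no trailing variable arguments. Unfolding the operator clause of the translation extension for $\tnatbar_\E$, the induced metasubstitution is exactly $\{\mvar{m}_i \Def (\vvar{x}_i)\mvar{m}_i[\vvar{x}_i]\}_i$, which is the unit of metasubstitution and hence acts as the identity; together with the fact that $\tnatbar_\E$ returns $\trans^\E_\omega$ on the relevant zero-arity operator, this yields $\trans^\E_\omega$ back verbatim. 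Thus $\tnatbar_\E \circ \tnat_\E$ and $\trans^\E$ agree syntactically.

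The second composite $\tnat_\E \circ \tnatbar_\E$ is where the real work lies, because the round-trip through $\M(\E)$ forces a passage through parameterisation. Starting from an operator $\omega_f$ of $\IL(M_\E)$ with $f = \langle [t]_\E \rangle \colon \tuple{m}{k} \To (n)$, the translation $\tnatbar_\E$ returns the parameterised term $t\{x_i \Def \mvar{n}_i[]\}$; applying the extension Lemma (Lemma 6.4) produces $\term_{\langle [s]_\E \rangle}$, and by the parameterisation bijection $\M(\E)\big(\tuple{m}{k}, \e{(0)^n}{(0)}\big) \cong \M(\E)\big(\tuple{m}{k} \times (0)^n, (0)\big)$ of the exponential structure, the morphism $\langle [s]_\E \rangle$ is exactly $\ev_n \circ (f \times (0)^n)$. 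Hence the composite sends $\omega_f$ to $\term_{\ev_n \circ (f \times (0)^n)}$, whereas $\trans^{\IL(M_\E)}$ sends it to the parameterisation of $\term_f$; these two terms are not syntactically equal. I would close this gap by parameterising the derivable equation $\term_f \eq \term_{\ev_n \circ (f \times (0)^n)}$ furnished by the redundancy of operators (Remark 5.4), which is provable from $(\E 1)$ and $(\E 2)$, thereby obtaining the required provable equality in $\IL(M_\E)$.

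The main obstacle is precisely this last identification: the deliberately redundant operator set of the internal language means the composite $\tnat_\E \circ \tnatbar_\E$ lands on $\term_{\ev_n \circ (f \times (0)^n)}$ rather than on the nominally identical simplest term, so establishing $\tnat_\E \circ \tnatbar_\E = \trans^{\IL(M_\E)}$ genuinely requires invoking the equational theory of $\IL(M_\E)$ together with the parameterisation correspondence, in contrast to the clean first direction. Once both composites are identified with the respective syntactic identities, $\tnat_\E$ is an isomorphism with inverse $\tnatbar_\E$, giving the claimed isomorphism $\E \cong \IL(M_\E)$ in $\SOEP$.
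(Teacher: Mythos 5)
Your proposal is correct and shares the paper's overall strategy: you exhibit the natural translation $\tnat_\E$ and the opposite translation $\tnatbar_\E$ as mutually inverse and check both composites on operators, and your treatment of $\tnatbar_\E \circ \tnat_\E$ is exactly the paper's (empty parameterising substitution plus the unit law for metasubstitution). Where you genuinely diverge is the second composite. The paper first uses Compositionality to commute the extension of $\tnat_\E$ past the parameterising substitution, $\tnat_\E\big(s\{x_i \Def \mvar{n}_i[]\}_{i \in \card{n}}\big) = \tnat_\E(s)\{x_i \Def \mvar{n}_i[]\}_{i \in \card{n}}$, and only then applies Lemma 6.4 to $s$ in its original variable context $\vcon{x}{n}$; this lands directly on $\term_{\langle[s]_\E\rangle}\{x_i \Def \mvar{n}_i[]\}_{i \in \card{n}} = \trans^{\IL(M_\E)}\big(\omega_{\langle[s]_\E\rangle}\big)$, so Remark 5.4 is never needed. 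You instead apply Lemma 6.4 to the already-parameterised term, identify the indexing morphism as $\ev_n \circ (f \times (0)^n)$ via the exponential bijection, and must then invoke the derivable redundancy equation $\term_f \eq \term_{\ev_n \circ (f \times (0)^n)}$ of Remark 5.4 to bridge back to $\trans^{\IL(M_\E)}(\omega_f)$. Both routes are sound; the paper's ordering of the two lemmas is what keeps the operator index unchanged and makes its computation shorter. What your route buys is an honest accounting of a point the paper elides: the composite cannot coincide with the identity translation syntactically, and indeed Lemma 6.4 itself can only hold up to provable equality in $\IL(M_\E)$ (for $s = x_i$ the extension is $x_i$, not $\term_{\langle[x_i]_\E\rangle}$), so the isomorphism must be read modulo the equational theory of the internal language, with translations identified when their operator assignments are provably equal. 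Your explicit appeal to $(\E 1)$, $(\E 2)$ and Remark 5.4 makes this visible, whereas the paper absorbs the same issue silently into Lemma 6.4.
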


\begin{proof}
The isomorphism is witnessed by the natural translation $\tnat_\E \colon \E \To \IL(M_\E)$ with its inverse given by the opposite natural translation $\tnatbar_\E \colon \IL(M_\E)$. Indeed, an operator $\omega \colon \tuple{m}{k}$ of $\E$ is mapped under the composite $\tnatbar_\E \circ \tnat_\E$ to 
$$ \mcon{m}{k} \Sep \emptycon  \vdash  \tnatbar_\E \big( \omega_{\langle [ \trans^\E (\omega) ]_\E \rangle} \big) \squad = \squad \trans^\E (\omega) \quad . $$
In the other direction, for a morphism $\langle [s]_\E \rangle \colon \tuple{m}{k} \To (n)$ of $\M(\E)$, the operator $\omega_{\langle [s]_\E \rangle}$ is mapped under $\tnat_\E \circ \tnatbar_\E$ to 
\begin{eqnarray*}
\mcon{m}{k}, \mvar{n}_1 \colon [0], \dots, \mvar{n}_n \colon [0] \Sep \emptycon & \vdash & \tnat_\E \big( s \big\{ x_i \Def \mvar{n}_i[] \big\}_{i \in \card{n}} \big) \\
& = & \tnat_\E (s) \big\{ x_i \Def \tnat_\E (\mvar{n}_i[]) \big\}_{i \in \card{n}} \\
& = & \tnat_\E (s) \big\{ x_i \Def \mvar{n}_i[] \big\}_{i \in \card{n}} \\
& = & \term_{\langle [s]_\E \rangle} \big\{ x_i \Def \mvar{n}_i[] \big\}_{i \in \card{n}} \\
& = & \trans^{\IL(M_\E)} (\omega_{\langle [s]_\E \rangle}) \quad . 
\end{eqnarray*}
\end{proof}

We finally illustrate that the constructions of induced algebraic and syntactic translations are mutually inverse, thereby establishing them as the correct notions of morphisms of, respectively, algebraic theories and equational presentations. 

\begin{thm}[Second-Order Syntactic Categorical Algebraic Theory Correspondence]
The categories $\SOAT$ and $\SOEP$ are equivalent.
\end{thm}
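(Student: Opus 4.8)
The plan is to build the equivalence directly from the two functors $\M(-) \colon \SOEP \To \SOAT$ and $\IL(-) \colon \SOAT \To \SOEP$ constructed above, using the two object-level isomorphism theorems as the components of the required natural isomorphisms. I would invoke the standard criterion that $\M(-)$ and $\IL(-)$ form an equivalence as soon as one exhibits natural isomorphisms $\mathrm{Id}_{\SOEP} \cong \IL(-) \circ \M(-)$ and $\mathrm{Id}_{\SOAT} \cong \M(-) \circ \IL(-)$. The pointwise isomorphisms are already in hand: the \emph{presentation/theory correspondence} supplies, for each presentation $\E$, an isomorphism $\tnat_\E \colon \E \To \IL(M_\E) = (\IL \circ \M)(\E)$ with inverse $\tnatbar_\E$, and the \emph{theory/presentation correspondence} supplies, for each theory $M \colon \M \To \Mlaw$, an isomorphism $\epsilon_M \colon M \To M_{\IL(M)} = (\M \circ \IL)(M)$. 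Hence everything reduces to verifying that these two families are natural, after which the conclusion is immediate.

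First I would check naturality of $\tnat$. For a syntactic translation $\trans \colon \E \To \E'$ the requirement is the equation $\IL(\M(\trans)) \circ \tnat_\E = \tnat_{\E'} \circ \trans$ of syntactic translations $\E \To \IL(M_{\E'})$. Since a syntactic translation is determined by its values on operators, it suffices to evaluate both sides on an arbitrary operator $\omega \colon \tuple{m}{k}$ of $\E$ and compare the resulting terms of $\IL(M_{\E'})$. On the right, the definition of $\tnat_{\E'}$ together with the Lemma on the extension of the natural translation sends $\trans_\omega$ to $\term_{\langle [\trans_\omega]_{\E'} \rangle}$. On the left, unfolding $\tnat_\E$, then the action of $\M(\trans)$ on equivalence classes, and finally the definition of $\IL(-)$ on operators $\omega_f$, produces the same term; the only nontrivial point is that $\trans$ commutes with the metasubstitution implicit in composition inside $\M(\E)$, which is exactly the Compositionality Lemma.

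Next I would check naturality of $\epsilon$. For an algebraic translation $F \colon \Mlaw \To \Mlaw'$ over $\M$, both $\epsilon_M, \epsilon_{M'}$ and $\M(\IL(F))$ are identity-on-objects cartesian functors, so naturality amounts to the agreement of $\M(\IL(F)) \circ \epsilon_M$ and $\epsilon_{M'} \circ F$ on morphisms $f \colon \tuple{m}{k} \To (n)$ of $\Mlaw$. Using the explicit description of $\epsilon$ from Appendix~B (which, on such an $f$, is governed by the operator $\omega_f$ and its canonical term $\term_f$), the definition of the induced syntactic translation $\IL(F)$, and the definition of $\M(-)$ on translations, both composites reduce to the class determined by $\term_{Ff}$ in $\M(\IL(M'))$, which gives the desired equality. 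With both families natural, $\M(-)$ and $\IL(-)$ are mutually quasi-inverse and therefore exhibit the equivalence $\SOEP \simeq \SOAT$.

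I expect the $\epsilon$-naturality step on the $\SOAT$ side to be the main obstacle, since it depends on the concrete form of the Appendix~B isomorphism and requires tracking a morphism $f$ through the round trip $f \mapsto \omega_f \mapsto \term_f \mapsto \langle [\term_f] \rangle$ while keeping the parameterisation of the trailing variable context (encoded by the zero-ary metavariables $\mvar{n}_1 \colon [0], \dots, \mvar{n}_n \colon [0]$) correctly aligned on both sides of the square. By comparison, the $\tnat$-naturality step on the $\SOEP$ side is largely mechanical once the extension Lemma and the Compositionality Lemma are invoked.
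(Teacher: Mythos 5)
Your proposal is correct and follows essentially the same route as the paper: the equivalence is exhibited by $\M(-)$ and $\IL(-)$ with natural isomorphisms whose components are precisely the isomorphisms of the theory/presentation correspondence (Theorem 5.5, your $\epsilon_M = \nat_M$) and the presentation/theory correspondence (Theorem 6.5, $\tnat_\E$), and naturality of both squares is checked exactly as you describe, by evaluating on a morphism $f \colon \tuple{m}{k} \To (n)$ of $\Mlaw$ (both composites reducing to $\langle[\term_{Ff}]\rangle$) and on an operator $\omega$ of $\E$ (both composites reducing to $\term_{\langle[\trans(\omega)]_{\E'}\rangle}$, via the extension lemma for $\tnat$ and the definition of $\M(\trans)$ on equivalence classes).
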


\begin{proof}
The equivalence is given by the functors 
$$\IL(-) \colon \SOAT \To \SOEP \qquad \mathrm{and} \qquad \M(-) \colon \SOEP \To \SOAT$$
together with the natural transformation $\nat \colon \textrm{Id}_\SOAT \To \M(\IL(-))$ with component at a second-order algebraic theory $M \colon \M \To \Mlaw$ given by the isomorphism
$$ \nat_M \colon \Mlaw \To \M(\IL(M)) $$
defining the Theory/Presentation Correspondence of Theorem 5.5, and also the natural transformation \linebreak$\tnat \colon \textrm{Id}_\SOEP \To \IL(\M(-))$ with component at a second-order equational presentation $\E = (\Sigma,E)$ given by the isomorphism
$$ \tnat_\E \colon \E \To \IL(M_\E) $$
defining the Presentation/Theory Correspondence of Theorem 6.5. From the very definitions of the functors $\M(-)$ and $\IL(-)$ and the isomorphisms $\nat_{(-)}$ and $\tnat_{(-)}$, the diagrams
\begin{diagram}[small]
\Mlaw & \rTo^{F} & \Mlaw ' & \qquad & \E & \rTo^{\trans} & \E ' \\
\dTo^{\nat_M} & & \dTo_{\nat_{M'}} & \qquad & \dTo^{\tnat_\E} & &  \dTo_{\tnat_{\E '}} \\
\M(\IL(M)) & \rTo^{\M(\IL(F))} & \M(\IL(M')) & \qquad & \IL(M_\E) & \rTo^{\IL(\M(\trans))} & \IL(M_{\E '}) \\
\end{diagram}

commute for any second-order algebraic translation $F$ between algebraic theories $M \colon \M \To \Mlaw$ and $M' \colon \M \To \Mlaw '$, and any second-order syntactic translation $\trans \colon \E \To \E '$ of equational presentations $\E$ and $\E '$, thereby establishing naturality of $\nat$ and $\tnat$. 

Now, consider the diagram above on the left; its commutativity is trivial on the objects of $\Mlaw$. Given a morphism $f \colon \tuple{m}{k} \To (n)$ of $\Mlaw$, its image under $\nat_{M'} \circ F$ is the morphism
$ \big\langle \big[ \term_{Ff} \big]_{\IL(M)} \big\rangle \colon \tuple{m}{k} \To (n) $.
Going the other way, the image of $f$ under $\M(\IL(F)) \circ \nat_M$ is given by
\begin{align*}
& \quad \quad  \M(\IL(F)) \big\langle \big[ \term_f \big]_{\IL(M)} \big\rangle \\
& = \quad \big\langle \big[ \IL(F) \big( \term_{f} \big) \big]_{\IL(M)} \big\rangle \\
& = \quad  \big\langle \big[ \IL(F) (\omega_f) \{ \mvar{n}_i \Def x_i \}_{i \in \card{n}} \big]_{\IL(M)} \big\rangle \\
& = \quad \big\langle \big[ \term_{Ff} \big]_{\IL(M)} \big\rangle \quad .
\end{align*}
To verify the commutativity of the diagram to the right, note that the image of an operator $\omega \colon n_1, \dots, n_l$ of $\E$ under the composite $\tnat_{\E '} \circ \trans$ is the term
$ \mcon{n}{l} \Sep \emptycon \vdash \term_{\langle [\trans(\omega)]_{\E '} \rangle}$. 
On the other hand, the image of $\omega$ under $\IL(\M(\trans)) \circ \tnat_\E$ is given by
\begin{align*}
& \quad \quad \IL(\M(\trans)) \big( \term_{\langle [ \term_\omega ]_\E \rangle} \big) \\
& = \quad \term_{\M(\trans)\langle [ \term_\omega ]_\E \rangle} \\
& = \quad \term_{\langle [ \trans(\term_\omega) ]_{\E '} \rangle} \\
& = \quad \term_{\langle [\trans(\omega)]_{\E '} \rangle}  \quad .
\end{align*}
\end{proof}

\section{Second-Order Functorial Semantics}
\label{funsem}

Before generalising Lawvere's functorial semantics to the second order, we need recall and develop some aspects of the theory of clones.
In modern first-order universal algebra \cite{Cohn1965}, clones provide an abstract presentation of algebras. 

\subsection{Clone Structures}

One understands by a \emph{clone} on a set $S$ the set of all \emph{elementary operations} on $S$, which includes projections $S^n \To S$ for any $n \in \N$ and is closed under multiple finitary function composition. 
More formally, a \emph{clone} in a cartesian category $\cat{C}$ is an $\N$-indexed collection $\{ C_n \}_{n \in \N}$ of objects of $\cat{C}$ equipped with \emph{variable maps} $\iota^{(n)}_i \colon 1 \To C_n$, $(i \in \card{n})$, for each $n \in \N$, and \emph{substitution maps} $\varsigma_{m,n} \colon C_m \times (C_n)^m \To C_n$ for each $m,n \in \N$, such that the following commute:
\begin{diagram}[small]
C_n \times 1 && \rTo^{id_{C_n} \times \langle \iota^{(n)}_1, \dots, \iota^{(n)}_n \rangle} & & C_n \times (C_n)^n \\
& \rdTo_{\pi_1} & & \ldTo_{\varsigma_{n,n}} \\
& & C_n 
\end{diagram}
\begin{diagram}[small]
1 \times (C_n)^m & & \rTo^{\pi_2} & & (C_n)^m \\
\dTo^{\iota^{(m)}_i \times id_{(C_n)^m}} & & & & \dTo_{\pi_i} \\
C_m \times (C_n)^m & & \rTo^{\varsigma_{m,n}} & & C_n 
\end{diagram}
\begin{diagram}[small]
C_l \times (C_m)^l \times (C_n)^m & & \rTo^{\varsigma_{l,m} \times id_{(C_n)^m}} & & C_m \times (C_n)^m \\
\dTo^{\varphi} & & & & \dTo_{\varsigma_{m,n}} \\
C_l \times (C_n)^l & & \rTo^{\varsigma_{l,n}} & & C_n \\ 
\end{diagram}

where $\varphi$ is the morphism $id_{C_l} \times \langle \varsigma_{m,n} \circ (\pi_i \times id_{(C_n)^m}) \rangle_{i \in \card{l}}$. 
Every clone $\{C_n\}_{n\in\N}$ in $\cat{C}$ canonically extends to a functor $\ff \To \cat{C}$ defined by mapping $n$ to $C_n$. Moreover, given another cartesian category $\cat{D}$, any cartesian functor $F \colon \cat{C} \To \cat{D}$ preserves the clone structure in $\cat{C}$, in the sense that every clone $\{C_n\}_{n \in \N}$ of $\cat{C}$ induces the clone $\{ F(C_n)\}_{n \in \N}$ with structure maps given by $F(\iota^{(n)}_i)$ and $F(\varsigma_{m,n} \circ \cong)$ (for $m,n \in \N$ and $i \in \card{n}$), where $\cong$ is the canonical isomorphism $F(C_m) \times \big(F(C_n)\big)^m \To F\big( C_m \times (C_n)^m \big)$. 

Given a cartesian category $\cat{C}$, the category $\Clone(\cat{C})$ has objects clones $\{ C_n \}_{n \in \N}$ of $\cat{C}$. A \emph{clone homomorphism} $ \{ C_n \}_{n \in \N} \To \{ D_n \}_{n \in \N} $ is an $\N$-indexed family of morphisms $ \{ h_n \colon C_n \To D_n \}_{n \in \N} $ of $\cat{C}$ such that for all $m,n \in \N$ the following commute:
\begin{diagram}
1 & \rTo^{\iota^{(C)}_i} & C_n & & & C_m \times (C_n)^m & \rTo^{\varsigma^{(C)}_{m,n}} & C_n \\
& \rdTo^{\iota^{(D)}_i} & \dTo_{h_n} & & & \dTo^{h_m \times (h_n)^m} & & \dTo_{h_n} \\
& & D_n & & & D_m \times (D_n)^m & \rTo^{\varsigma^{(D)}_{m,n}} & D_n
\end{diagram}

\subsection{Clone semantics}

A clone for a second-order signature $\Sigma$ in a cartesian category $\cat{C}$
is given by a clone $\{ S_n \}_{n \in \N}$ in $\cat{C}$, together with, for each $n \in \N$, \emph{natural operator maps}
$ \tilde{\omega}_n \colon S_{n+n_1} \times \cdots \times S_{n+n_l} \To S_n $
for every operator $\omega \colon n_1, \dots, n_l$ of $\Sigma$, such that, for all $n,m \in \N$, the diagram
\begin{diagram}[small]
& & \prod_{i \in \card{l}}  S_{n+n_i} \times (S_{m+n_i})^{n+n_i} & & \\
{\scriptstyle \langle \textrm{id} \times \upsilon_{n_i} \rangle_{i \in \card{l}}} &\ruTo^{}  & & \rdTo^{\qquad} &  {\scriptstyle\prod_{i \in \card{l}} \varsigma_{n+n_i,m+n_i}}  \\
\prod_{i \in \card{l}} S_{n+n_i} \times (S_m)^n & & & & \prod_{i \in \card{l}} S_{m+n_i} \\
\dTo^{\scriptstyle \tilde{\omega}_n \times \upsilon_0} & & & & \dTo_{\scriptstyle \tilde{\omega}_m} \\
S_{n} \times (S_{m})^{n} & & \rTo^{\scriptstyle \varsigma_{n,m}} & & S_{m} \\
\end{diagram}
commutes, where for each $k \in \N$, the morphism $\upsilon_k$ is given by
\begin{diagram}[small]
(S_m)^n \squad \cong \squad (S_m)^n \times 1 & \rTo^{(S_j)^n \times \langle \iota^{(m+k)}_{m+i} \rangle_{i \in \card{k}}} & (S_{m+k})^n \times (S_{m+k})^k \squad \cong \squad (S_{m+k})^{n+k} \quad ,
\end{diagram}
and $j$ is the inclusion $\card{m} \hookrightarrow \card{m+k}$. Note that at $0$, $\upsilon_0$ is just the identity on $(S_m)^n$. 
The naturality condition on the operator maps above refers to the canonical action for any $f \colon m \To n$ in $\ff$ given by the composite
\begin{diagram}[small]
C_m \squad \cong \squad C_m \times 1 & \rTo^{C_m \times \langle \iota^{(n)}_{f1}, \dots, \iota^{(n)}_{fm} \rangle} & C_m \times (C_n)^m & \rTo^{\varsigma_{m,n}} & C_n 
\end{diagram}
that is available in any clone. 

We write $\Sigma\mbox{-}\Clone(\cat{C})$ for the category of $\Sigma$-clones in $\cat{C}$, with morphisms given by clone homomorphisms which commute with the natural operator maps $\tilde{\omega}_n$ for every operator $\omega$ of $\Sigma$ and $n \in \N$. 
A $\Sigma$-clone $\{S_n\}_{n \in \N}$ in a cartesian category $\cat{C}$ is \emph{preserved} under a functor $F \colon \cat{C} \To \cat{D}$ if $\{ F(S_n) \}_{n \in \N}$ is a $\Sigma$-clone in the cartesian category $\cat{D}$ with structure maps given by the image under $F$ of the structure maps associated to the clone $\{S_n\}_{n \in \N}$. It is evident that clones are necessarily preserved under cartesian functors.

A $\Sigma$-clone $\{ S_n \}_{n \in \N}$ in $\cat{C}$ induces an interpretation of second-order terms in $\cat{C}$. For the metavariable context $\Theta = (\mcon{m}{k})$ and variable context $\Gamma = (\vcon{x}{n})$, the interpretation of a term $\Theta \Sep \Gamma \vdash t$ under the clone $\{ S_n \}_{n \in \N}$ is a morphism 
$ \alg{\Theta \Sep \Gamma \vdash t}{S} \colon \prod_{i \in \card{k}} S_{m_i} \To S_n $
given by induction on the structure of the term $t$ as follows:
\begin{itemize}
\item[-] $\alg{\Theta \Sep \Gamma \vdash x_i}{S}$ is the composite
\begin{diagram}[small]
\prod_{i \in \card{k}} S_{m_i} & \rTo^{!} & 1 & \rTo^{\iota^{(n)}_i} & S_n \quad .
\end{diagram}
\item[-] $\alg{\Theta \Sep \Gamma \vdash \mvar{m}_i[t_1, \dots, t_{m_i}]}{S}$ is the composite
\begin{diagram}[small]
\prod_{i \in \card{k}} S_{m_i} & \rTo^{\langle \pi_i, \alg{\Theta \Sep \Gamma \vdash t_1}{S}, \dots, \alg{\Theta \Sep \Gamma \vdash t_{m_i}}{S} \rangle} & S_{m_i} \times (S_n)^{m_i} & \rTo^{\varsigma_{m_i,n}} & S_n \quad .
\end{diagram} 
\item[-] For an operator $\omega \colon n_1, \dots, n_l$, $\alg{\Theta \Sep \gamma \vdash \omega\big( (\vvar{y_1})t_1, \dots, (\vvar{y_l})t_l\big)}{S}$ is the composite
\begin{diagram}[small]
\prod_{i \in \card{k}} S_{m_i} & \rTo^{\langle \alg{\Theta \Sep \Gamma_{n_i} \vdash t_i}{S} \rangle_{i \in \card{l}}} & \prod_{i \in \card{l}} S_{n+n_i} & \rTo^{\tilde{\omega}} & S_n \quad ,
\end{diagram}
where for $i \in \card{l}$, $\Gamma_{n_i}$ is the context $\Gamma,\ivcon{y}{l}{i}$. 
\end{itemize}

Given a functor $F \colon \cat{C} \To \cat{D}$, we say that the term interpretation $\alg{\Theta \Sep \Gamma \vdash t}{S}$ under the $\Sigma$-clone $\{ S \}_{n \in \N}$ in $\cat{C}$ is \emph{preserved} under $F$ if $F \alg{\Theta \Sep \Gamma \vdash t}{S} = \alg{\Theta \Sep \Gamma \vdash t}{FS}$ in $\cat{D}$. Evidently, term interpretations are preserved under cartesian functors.

For a second-order equational presentation $\E = (\Sigma, E)$, an \emph{$\E$-clone} in a cartesian category $\cat{C}$ is a $\Sigma$-clone $\{ S_n \}_{n \in \N}$ in $\cat{C}$ such that for all axioms $\Theta \Sep \Gamma \vdash_E s \eq t$ of $\E$, the morphisms $\alg{\Theta \Sep \Gamma \vdash s}{S}$ and $\alg{\Theta \Sep \Gamma \vdash t}{S}$ are equal in $\cat{C}$. In this case, we say that the clone $\{ S_n \}_{n \in \N}$ satisfies the axioms of $\E$. 
We write $\E \mbox{-} \Clone(\cat{C})$ for the full subcategory of $\Sigma \mbox{-}\Clone(\cat{C})$ consisting of the $\Sigma$-clones in $\cat{C}$ which satisfy the axioms of the presentation $\E = (\Sigma, E)$.

Clones for second-order signatures provide an axiomatisation for variable binding, parameterised metavariables and simultaneous substitution. We recall here that they are in fact an abstract, yet equivalent, formalisation of (set-theoretic) second-order model theory as presented in Section 3.6. 

\begin{prop}
For $\Sigma$ a second-order signature, the category $\Mod(\Sigma)$ of set-theoretic algebraic models for $\Sigma$ is equivalent to the category $\Sigma\mbox{-}\Clone(\Set)$ of $\Sigma$-clones in $\Set$.
\end{prop}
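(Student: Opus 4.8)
The plan is to exhibit mutually inverse functors between the two categories by showing that the defining data of a $\Sigma$-monoid on a presheaf $A \in \Set^\ff$ and of a $\Sigma$-clone $\{S_n\}_{n \in \N}$ in $\Set$ are literally the same information, repackaged. The bridge consists of the elementary identities $A^{\y n}(m) \cong A(m+n)$ (using $\y m \times \y n \cong \y(m+n)$ together with the Yoneda lemma) and $(A \tensor A)(n) = \int^{k} A(k) \times A(n)^k$. A presheaf $A$ is precisely an $\N$-indexed family of sets $S_n \Def A(n)$ together with a functorial $\ff$-action. The one genuinely non-formal point at the object level is that this $\ff$-action must be recoverable from the monoid data: I would use the right unit law of the substitution monoid to show that $A(f) = \varsigma_{k,n}(-, \langle \iota^{(n)}_{fi} \rangle_{i})$ for every $f \colon k \To n$ in $\ff$, which is exactly the canonical clone action described in the discussion of clone structures. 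Hence $A \mapsto \{A(n)\}$ and $\{S_n\} \mapsto A$ (with $A(n) = S_n$) are inverse on underlying data, so the correspondence is in fact an isomorphism of categories, a fortiori an equivalence.

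Granting this, I would first translate the monoid structure. The unit $\nu \colon \y 1 \To A$ has component $\nu_n \colon \card n \To A(n)$, and the $n$ points it selects are the variable maps $\iota^{(n)}_1, \dots, \iota^{(n)}_n \colon 1 \To S_n$. The multiplication $\varsigma \colon A \tensor A \To A$, by the universal property of the coend, is exactly a dinatural family $A(k) \times A(n)^k \To A(n)$, which is precisely the family of substitution maps $\varsigma_{k,n}$; here I would check that the monoid unit and associativity laws unwind, component-wise, into the three clone axioms (the right-unit, projection, and substitution-associativity squares). For the signature part, the $\sfun{\Sigma}$-algebra structure $\alpha \colon \sfun{\Sigma}A \To A$ has, for each $\omega \colon n_1, \dots, n_l$, a component $\alg{\omega}{A} \colon \prod_i A^{\y n_i} \To A$ whose value at $m$ is, via $A^{\y n_i}(m) \cong A(m+n_i)$, a map $\prod_i S_{m+n_i} \To S_m$; this is the operator map $\tilde\omega_m$, and naturality of $\alg{\omega}{A}$ as a morphism of $\Set^\ff$ is exactly the clone naturality requirement.

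I would then match the two compatibility conditions: the pentagon expressing that $\alpha$ is a $\varsigma$-homomorphism via the pointed strength $\varpi$, and the hexagonal diagram in the definition of a $\Sigma$-clone relating $\tilde\omega$ and $\varsigma_{m,n}$. This is where the real work lies. The point is that the extension maps $\eta^{\#n}$ and the strength $\varpi$, which insert fresh variables in the binding positions, should correspond component-wise to the morphisms $\upsilon_k$ of the clone definition, which weaken $(S_m)^n$ into $(S_{m+k})^{n+k}$ using the variable maps $\iota^{(m+k)}_{m+i}$; unwinding both diagrams at an arbitrary stage and applying $A^{\y n}(m) \cong A(m+n)$ should make them coincide. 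I expect this diagram chase---keeping the coend presentation of $\tensor$ consistent with the reindexing of exponentials---to be the main obstacle, the freshness/weakening bookkeeping being the delicate part.

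Finally, on morphisms, a monoid-and-algebra homomorphism $A \To B$ is a natural transformation commuting with $\nu$, $\varsigma$ and $\alpha$; component-wise this is exactly an $\N$-indexed family $\{h_n \colon S_n \To T_n\}$ commuting with the variable, substitution and operator maps, i.e. a $\Sigma$-clone homomorphism, and conversely. Functoriality of both assignments is then immediate, and the object- and morphism-level bijections established above yield the claimed equivalence.
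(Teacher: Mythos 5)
Your proposal is correct, but note that there is no detailed proof in the paper to compare it against: the paper proves this proposition by citation alone, deferring entirely to the development of Fiore, Plotkin and Turi. What you outline is essentially that standard argument, and your key identifications are the right ones: the bridge $A^{\y n}(m) \cong A(m+n)$; the unit and multiplication giving the variable and substitution maps; the right-unit law forcing $A(f) = \varsigma_{k,n}(-, \langle \iota^{(n)}_{fi} \rangle_{i})$, so that the $\ff$-action is recoverable from the clone data (whence, as you claim, an isomorphism of categories and a fortiori an equivalence); and the matching of the pointed strength with the clone weakening maps $\upsilon_k$. Two executional points should be made explicit in a full write-up. First, in the clone-to-monoid direction, before the coend universal property can produce $\varsigma \colon A \tensor A \To A$ you must check that the family $\varsigma_{k,n}$ is dinatural in $k$; dinaturality is not one of the clone axioms, but it follows from the projection and associativity axioms combined with your formula for the action. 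Second, the pentagon-versus-hexagon chase you defer does close exactly as you predict: at stage $m$, an element of $(\sfun{\Sigma}(A) \tensor A)(m)$ is represented by a pair consisting of $\xi = (\omega; a_1, \dots, a_l) \in \sfun{\Sigma}(A)(n)$, with $a_i \in A(n+n_i)$, and $\vec{b} \in A(m)^n$; the extension maps inside $\varpi_{A,\nu}$ send $\vec{b}$ to its weakening along $\card{m} \hookrightarrow \card{m+n_i}$ extended by the points $\nu_{m+n_i}(m+j) = \iota^{(m+n_i)}_{m+j}$ for $j \in \card{n_i}$, which is literally $\upsilon_{n_i}$ applied to $\vec{b}$; the two sides of the compatibility pentagon then read off, component by component, as the two legs of the $\Sigma$-clone diagram at $(n,m)$, with $\upsilon_0 = \mathrm{id}$ accounting for the unbound leg. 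So your location of where the real work lies, and your expectation that it goes through, are both accurate.
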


\begin{proof}
A detailed development of this equivalence appears in \cite{FiorePlotkinTuri1999}.
\end{proof}

One needs an additional argument to show that the same holds when adding equations, that is that clones and algebras for second-order equational presentations are equivalent. To this end, let $\E = (\Sigma,E)$ be a second-order equational presentation and $\mcon{m}{k} \Sep \vcon{x}{n} \vdash_\E s \eq t$ an equation of $\E$. Recall that a set-theoretic algebra $A$ of $\Mod(\E)$ satisfies all equations of $\E$, and therefore the respective term interpretations $\alg{s}{A}$ and $\alg{t}{A}$ are equal morphisms
$$ \prod_{i \in \card{k}} A^{\y m_i} \times \y n \To A $$
in $\Set^\ff$. Consequently, their corresponding exponential transposes $\lam \alg{s}{A}$ and $\lam \alg{t}{s}$ are equal morphisms
$ \prod_{i \in \card{k}} A^{\y m_i} \To A^{\y n} $.

Now, under the equivalence of Proposition 7.1, the $\Sigma$-algebra $A$ corresponds to the $\Sigma$-clone $\hat{A} = \{ A(n) \}_{n \in \N}$ in $\Set$, which induces the term interpretations $\alg{s}{\hat{A}}$ and $\alg{t}{\hat{A}}$ given by the component at $(0)$ of $\lam \alg{s}{A}$ and $\lam \alg{t}{A}$, respectively. Therefore, 
$$ \alg{s}{\hat{A}} = \alg{t}{\hat{A}} \colon \prod_{i \in \card{k}} A(m_i) \To A(n) $$
in $\Set$. We have thus shown that an equation of $\E = (\Sigma,E)$ satisfied by a $\Sigma$-algebra $A$ is also satisfied by the induced $\Sigma$-clone $\hat{A}$. 

The other direction is given by soundness and completeness. Suppose the judgement 
$$\mcon{m}{k} \Sep \vcon{x}{n} \vdash_\E s \eq t$$
is satisfied by a $\Sigma$-clone, then we know from soundness and completeness of Second-Order Equational Logic (Theorem 3.6) that it is necessarily satisfied by all $(\Sigma,E)$-algebras. 

A second-order term equation is hence satisfied by a signature algebra if and only if it is satisfied by the corresponding signature clone in $\Set$. This, together with Proposition 7.1, yields an alternative, yet equivalent, semantics of second-order equational presentations via abstract clone structures.

\begin{prop}
For $\E = (\Sigma,E)$ a second-order equational presentation, the categories $\Mod(\E)$ of second-order $\E$-algebras and $\E\mbox{-}\Clone(\Set)$ of set-theoretic $\E$-clones are equivalent.
\end{prop}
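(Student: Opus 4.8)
The plan is to obtain the equivalence by restricting the equivalence $\Mod(\Sigma) \simeq \Sigma\mbox{-}\Clone(\Set)$ of Proposition 7.1 to the two full subcategories in question, the restriction being legitimate precisely because of the satisfaction correspondence recalled in the discussion immediately preceding the statement.

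First I would recall that Proposition 7.1 provides an equivalence carrying a $\Sigma$-model $A \in \Mod(\Sigma)$ to its associated $\Sigma$-clone $\hat{A} = \{ A(n) \}_{n \in \N}$ in $\Set$, together with a quasi-inverse in the other direction. By definition $\Mod(\E)$ and $\E\mbox{-}\Clone(\Set)$ are the full subcategories of $\Mod(\Sigma)$ and $\Sigma\mbox{-}\Clone(\Set)$ consisting, respectively, of those models and those clones that satisfy the axioms $E$. Thus it suffices to check that the equivalence matches these defining conditions on objects, which is exactly the biconditional established above: a term equation holds in a $\Sigma$-algebra $A$ if and only if it holds in the corresponding clone $\hat{A}$. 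Applying this to each axiom of $E$ shows that $A \in \Mod(\E)$ if and only if $\hat{A} \in \E\mbox{-}\Clone(\Set)$, and symmetrically for the quasi-inverse.

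Once the objects are matched, the conclusion is formal: an equivalence restricts to full subcategories whose object-classes correspond under it. Indeed, the functor $A \mapsto \hat{A}$ and its quasi-inverse restrict to functors between $\Mod(\E)$ and $\E\mbox{-}\Clone(\Set)$; fullness and faithfulness are inherited since the subcategories are full; and the unit and counit of Proposition 7.1 restrict to natural isomorphisms, their components lying in the subcategories by the object-level correspondence just verified. This exhibits $\Mod(\E) \simeq \E\mbox{-}\Clone(\Set)$.

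I expect the only genuine content to reside in the satisfaction correspondence itself, which is the computation carried out before the statement: the clone interpretation $\alg{s}{\hat{A}}$ is the component at $(0)$ of the exponential transpose $\lam \alg{s}{A}$ of the presheaf interpretation $\alg{s}{A}$ in $\Set^\ff$, so that $\alg{s}{A} = \alg{t}{A}$ forces $\alg{s}{\hat{A}} = \alg{t}{\hat{A}}$, while the converse follows from Soundness and Completeness (Theorem 3.6). With that dictionary in hand, the passage to full subcategories poses no further obstacle.
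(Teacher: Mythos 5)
Your proposal is correct and follows essentially the same route as the paper: the paper's proof \emph{is} the discussion preceding the statement, namely Proposition 7.1 together with the satisfaction biconditional (algebra-to-clone via the exponential transpose $\lam \alg{s}{A}$ evaluated at $(0)$, and the converse via Soundness and Completeness, Theorem 3.6), after which the equivalence is restricted to the full subcategories cut out by the axioms $E$. The only difference is that you spell out the formal step of restricting an equivalence to corresponding full subcategories, which the paper leaves implicit.
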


\subsection{Classifying Clones}

Before formalising second-order functorial semantics, we show that second-order algebraic theories, and in particular those that classify second-order equational presentations, come equipped with a canonical clone structure induced by their universal exponentiable object. This will enable us to link functorial models directly to (set-theoretic) algebraic models via these so-called classifying clone structures.

Let $\cat{C}$ be a cartesian category. An exponentiable object $C$ of $\cat{C}$ canonically induces the clone
$\langle C \rangle = \{ \e{C^n}{C} \}_{n \in \N}, \quad \langle C \rangle_n = (\e{C^n}{C})$
with variable maps $\iota_i^{(n)} \colon 1 \To \langle C \rangle_n$ given by the unique exponential mates of the cartesian projections

\vspace{-20pt}

\begin{diagram}[small]
1 \times C^n \cong C^n & \rTo^{\pi_i^{(n)}} & C \quad .
\end{diagram}
The substitution map $\varsigma_{m,n} \colon \langle C \rangle_m \times \langle C \rangle_n^m \To \langle C \rangle_n$ is given by the exponential mate of
\begin{diagram}[small]
(\e{C^m}{C}) \times (\e{C^n}{C^m}) \times C^n & \rTo^{(\e{C^m}{C})\times ev_{n,m}} & (\e{C^m}{C}) \times C^m & \rTo^{ev_m} & C \quad ,
\end{diagram}
where $ev_{n,m} \colon (\e{C^n}{C^m}) \times C^n \To C^m$ is the evaluation map associated with the exponential $\e{C^n}{C^m} = (\e{C^n}{C})^m$. 
We refer to $\langle C \rangle$ as the \emph{clone of elementary operations} on the object $C$ of $\cat{C}$. Thus, as it is the case with every clone, the family $\langle C \rangle$ canonically extends to a functor $\ff \To \cat{C}$ mapping $n$ to $ \langle C \rangle_n$ and $f \colon n \To m$ to $\e{C^f}{C} \colon \langle C \rangle_n \To \langle C \rangle_m$. \\

\textbf{Classifying clones.} Let $\Sigma$ be a second-order signature and $\M(\Sigma)$ its classifying category. The \emph{classifying clone} of a second-order signature $\Sigma$ is given by the clone of operations $\langle 0 \rangle = \{ (n) \}_{n \in \N}$ on the universal exponentiable object $(0)$ of $\M(\Sigma)$, together with the family
$\{ \tilde{f_\omega} \}_{\omega \colon \tuple{n}{l} \in \Sigma}$,
where for an operator $\omega \colon \tuple{n}{l}$, $f_\omega$ is the morphism
$$ \langle \omega \big( \dots, (x_1, \dots, x_{n_i}) \mvar{n}_i [x_1, \dots, x_{n_i}] ,\dots \big) \rangle \colon \tuple{n}{l} \To (0) $$
of $\M(\Sigma)$ and the instance at $j \in \N$ of the family 
$ \tilde{f_\omega} = \big\{ \big( \tilde{f_\omega} \big)_j \big\}_{j \in \N} $
is given by
\begin{diagram}[small]
(j+n_1, \dots, j+n_l) & \squad \cong \squad  & \e{(0)^j}{\tuple{n}{l}} & \squad \rTo^{\e{(0)^j}{f_\omega}} \squad & \e{(0)^j}{(0)} & \squad  \cong \squad & (j) \quad .
\end{diagram}

Clearly, classifying clones satisfy the properties of clone structures.

\begin{lem}
The canonical classifying clone of a second-order signature $\Sigma$ in its classifying category $\M(\Sigma)$ is a $\Sigma$-clone.
\end{lem}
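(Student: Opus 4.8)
The clone $\langle 0 \rangle$ is, by definition, the clone of elementary operations on the exponentiable object $(0)$ of $\M(\Sigma)$, and hence is already a clone by the general construction recalled above. The plan is therefore to verify only the additional $\Sigma$-clone structure carried by the operator maps $\tilde{f_\omega}$: that each has the correct arity, that it is natural with respect to the clone action, and that it satisfies the operator-compatibility diagram in the definition of a $\Sigma$-clone.

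First I would dispatch the arity and naturality conditions, which are essentially formal. For the arity, the chain of canonical isomorphisms $(j+n_1, \ldots, j+n_l) \cong \e{(0)^j}{\tuple{n}{l}}$ exhibits $(\tilde{f_\omega})_j = \e{(0)^j}{f_\omega}$, transported along $\e{(0)^j}{(0)} \cong (j)$, as a morphism $(j+n_1, \ldots, j+n_l) \To (j)$, exactly as required for $\tilde{\omega}_j$. Naturality with respect to the $\ff$-action $j \mapsto (j)$ then follows from functoriality of the internal hom $\e{(0)^{(-)}}{(-)}$: the action maps are themselves built from evaluation maps and exponential mates, so that $\e{(0)^{(-)}}{f_\omega}$ is by construction a natural family in the reindexing variable.

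The substance of the argument is the operator-compatibility diagram, which expresses that the clone's substitution commutes with operator application. I would prove it syntactically, unfolding both legs into the explicit description of $\M(\Sigma)$-morphisms as equivalence classes of terms. Using the explicit syntactic form of the substitution maps $\varsigma$ recorded earlier and the description of $\tilde{f_\omega}$ as internal post-composition with $f_\omega$, each leg evaluates---after carrying out the composites, which in $\M(\Sigma)$ are computed by metasubstitution---to an equivalence class of terms built from $\omega$. Equality of the two classes is then an instance of the associativity of metasubstitution (Metasubstitution Lemma I) together with its compatibility with variable-substitution (Substitution--Metasubstitution Lemma): substituting the $n$ given operations into each argument of $\omega$ and then applying $\omega$ produces the same term as first forming $\omega$ and then substituting.

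The hard part will be the bookkeeping of variables forced by the maps $\upsilon_{n_i}$. These implement the weakening that keeps the $n_i$ variables bound in the $i$-th argument of $\omega$ disjoint from the $n$ variables being substituted, and the crux is to check that the fresh variables introduced along the substitution leg match exactly the extra bound variables carried along the operator leg. Once this index-matching is arranged, commutativity follows directly from the metasubstitution laws with no further structural content; an entirely abstract alternative would be to read both $\varsigma$ and $\tilde{f_\omega}$ as manifestations of the enriched composition determined by $(0)$ and to deduce the diagram from the naturality and associativity of the exponential transpose.
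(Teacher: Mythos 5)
Your plan is sound, and in fact it supplies more than the paper does: the paper offers no proof of this lemma at all, merely prefacing it with the remark that classifying clones ``clearly'' satisfy the clone-structure properties. Against that backdrop your decomposition is the right one. The underlying clone structure of $\langle 0 \rangle$ is indeed an instance of the general construction of the clone of elementary operations on an exponentiable object, so only the $\Sigma$-specific data needs checking. The arity of $(\tilde{f_\omega})_j$ is immediate from the isomorphisms $(j+n_1, \dots, j+n_l) \cong \e{(0)^j}{\tuple{n}{l}}$ and $\e{(0)^j}{(0)} \cong (j)$; naturality is, as you say, bifunctoriality of the exponential, since the $\ff$-action on $\langle 0 \rangle$ is precomposition $\e{(0)^f}{(0)}$ while $\tilde{f_\omega}$ is postcomposition with $f_\omega$, and these commute (modulo the routine check that the structural isomorphisms intertwine the two actions). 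For the compatibility square, your syntactic strategy is the natural one: composition in $\M(\Sigma)$ is metasubstitution, the maps $\varsigma_{m,n}$ have the explicit term description recorded in the proof of Lemma 4.1, and since $\Sigma$ carries no equations, equality of morphisms in $\M(\Sigma)$ is syntactic identity up to $\alpha$-equivalence, so both legs must evaluate to literally the same term built from $\omega$; that they do is exactly Metasubstitution Lemma I combined with the Substitution--Metasubstitution Lemma of Appendix A, with the $\upsilon_{n_i}$ maps supplying the weakening that keeps the substituends clear of the variables bound in each argument of $\omega$ --- the bookkeeping you correctly single out as the only real labour.
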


\textbf{Classifying term interpretation.} The classifying clone $\langle 0 \rangle$ induces a canonical interpretation of terms in $\M(\Sigma)$. For $\Theta = (\mcon{m}{k})$ and $\Gamma = (\vcon{x}{n})$, the interpretation $\alg{t}{\langle 0 \rangle}$ of a term $\Theta \Sep \Gamma \vdash t$ under the classifying clone is the morphism
$ \langle t \rangle \colon \tuple{m}{k} \To (n) $
in $\M(\Sigma)$, which can be easily verified by induction on the structure of $t$. \\

\textbf{Classifying presentation clones.} For a second-order equational presentation $\E = (\Sigma, E)$, we define its classifying clone in its classifying category $\M(\E)$ in a similar fashion, namely by the clone of operations $\langle 0 \rangle$ together with the family $\{ (\tilde{f}_\omega)_n \}_{n \in \N}$, where for $\omega \colon n_1, \dots, n_l$, the morphism $f_\omega$ is taken to be the tuple of the equivalence of the same term as in the definition of classifying signature clones, more precisely
$$ \big\langle \big[ \omega \big( \dots, (x_1, \dots, x_{n_i}) \mvar{n}_i [x_1, \dots, x_{n_i}] ,\dots \big) \big]_\E \big\rangle \colon \tuple{n}{l} \To (0) \quad . $$

A similar inductive argument shows that the interpretation for a term $\Theta \Sep \Gamma \vdash t$ induced by the classifying clone $\langle 0 \rangle$ in $\M(\E)$ is the morphism $\langle [ \Theta \Sep \Gamma \vdash t ]_\E \rangle$. 

A derivable judgement $\Theta \Sep \Gamma \vdash_\E s \eq t$ of $\E$ is therefore satisfied by the classifying clone of $\E$ in $\M(\E)$, since $\langle [ \Theta \Sep \Gamma \vdash s ]_\E \rangle$ and $\langle [ \Theta \Sep \Gamma \vdash t ]_\E \rangle$ are equal morphisms in $\M(\E)$, and therefore $\alg{s}{\langle 0 \rangle} = \alg{t}{\langle 0 \rangle}$. Classifying clones hence provide \emph{sound semantics} for second-order equational presentations in their classifying categories.

\subsection{Second-Order Functorial Semantics}

We extend Lawvere's functorial semantics for algebraic theories \cite{Lawvere2004} from first to second order.

\begin{defn}[Second-Order Functorial Model]
A second-order \emph{functorial model} of a second-order algebraic theory $M \colon \M \to \Mlaw$ is given by a cartesian functor $\Mlaw \To \cat{C}$, for $\cat{C}$ a cartesian category. We write $\FMOD{M, \cat{C}}$ for the category of functorial models of $M$ in $\cat{C}$, with morphisms (necessarily monoidal) natural transformations between them. A second-order \emph{set-theoretic functorial model} of a second-order algebraic theory $M \colon \M \to \Mlaw$ is simply a cartesian functor from $\Mlaw$ to $\Set$. We write $\FMOD{M}$ for the category of set-theoretic functorial models of $M$ in $\Set$.
\end{defn}

Note that, just as in Lawvere's first-order definition, we mereley ask for preservation of the cartesian structure rather than strict preservation. Consequently, functorial models of the same second-order algebraic theory may differ only by the choice of the cartesian product in $\Set$. However, since the cartesian structure in $\Set$ is not strictly associative (whereas it is strictly associative in any first- and second-order algebraic theory), asking for preservation in the definition of a functorial model avoids the creation of unnatural categories of models.

\subsection{Second-Order Semantic Categorical Algebraic Theory Correspondence}

We show that classifying clones, and thus second-order algebras, correspond to second-order functorial models. 

\begin{prop}
Let $\E=(\Sigma,E)$ be a second-order equational presentation and $M_\E \colon \M \To \M(\E)$ its classifying algebraic theory, and let $\cat{C}$ be a cartesian category. The category of $\E$-clones $\E\mbox{-}\Clone (\cat{C})$ and the category of second-order functorial models $\FMOD{M_\E, \cat{C}}$ are equivalent.
\end{prop}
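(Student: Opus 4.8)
The plan is to present the equivalence as a mutually-inverse-up-to-isomorphism pair of functors $\Phi \colon \FMOD{M_\E, \cat{C}} \To \E\mbox{-}\Clone(\cat{C})$ and $\Psi \colon \E\mbox{-}\Clone(\cat{C}) \To \FMOD{M_\E, \cat{C}}$. For $\Phi$, the easy direction, I would send a cartesian functor $F \colon \M(\E) \To \cat{C}$ to the $F$-image of the classifying clone $\langle 0 \rangle = \{(n)\}_{n \in \N}$ of $\M(\E)$. As already recorded in the excerpt, cartesian functors preserve clone structure and term interpretation, so $\{F(n)\}_{n \in \N}$ inherits a $\Sigma$-clone structure via the images of the variable maps $\iota^{(n)}_i$, the substitution maps $\varsigma_{m,n}$, and the operator maps $\tilde{f_\omega}$; and since the classifying clone satisfies the axioms of $E$ inside $\M(\E)$, its image satisfies them in $\cat{C}$, so $\Phi(F)$ is genuinely an $\E$-clone. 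A (necessarily monoidal) natural transformation $F \To G$ restricts at the objects $(n)$ to a family $\{F(n) \To G(n)\}_{n \in \N}$ that commutes with every structure map, hence is a clone homomorphism, which defines $\Phi$ on morphisms.

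For the substantive direction $\Psi$, given an $\E$-clone $\{S_n\}_{n \in \N}$ I would define $F \colon \M(\E) \To \cat{C}$ on objects by $F\tuple{m}{k} = S_{m_1} \times \dots \times S_{m_k}$ (so $F(n) = S_n$ and $F() = 1$), and on a morphism $\big\langle [t_i]_\E \big\rangle_{i \in \card{l}} \colon \tuple{m}{k} \To \tuple{n}{l}$ by the tuple of clone-theoretic term interpretations $\big\langle \alg{t_i}{S} \big\rangle_{i \in \card{l}}$. Well-definedness on morphisms is exactly soundness of clone interpretation in $\cat{C}$: if $s \sim_\E t$ then $\alg{s}{S} = \alg{t}{S}$, which I would prove by induction on derivations of Second-Order Equational Logic, the only nontrivial case being the extended metasubstitution rule. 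Functoriality reduces to showing that $\alg{-}{S}$ carries metasubstitution — which is precisely how composition in $\M(\E)$ is defined — to categorical composition, and that the elementary identity tuples interpret as identities; both are structural inductions powered by the three clone axioms. That $F$ is cartesian is immediate, since concatenation of object tuples is sent to the cartesian product and the explicit projections and pairings of $\M(\E)$ are interpreted correctly. A clone homomorphism is sent to the evident monoidal natural transformation, naturality being verified against the same term decomposition.

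It then remains to check that $\Phi$ and $\Psi$ compose to the identity up to isomorphism. The composite $\Phi\Psi$ recovers, from $\{S_n\}$, the clone $\{F(n)\}_{n \in \N}$ with $F(n) = S_n$; since each structure map of the classifying clone is the interpretation of the corresponding elementary or operator term, applying $F$ returns the original maps verbatim, giving $\Phi\Psi = \mathrm{Id}$. For $\Psi\Phi$, the rebuilt functor $F' = \Psi(\Phi(F))$ has $F'\tuple{m}{k} = F(m_1) \times \dots \times F(m_k)$, canonically isomorphic to $F\tuple{m}{k} = F\big((m_1) \times \dots \times (m_k)\big)$ because a functorial model preserves products only up to isomorphism; these comparison maps assemble into a natural isomorphism $F' \To F$, whose naturality I would establish by decomposing an arbitrary morphism of $\M(\E)$ into the universal cartesian and exponential maps of the Lemma of Section~\ref{so_equality} and using that both $F$ and $F'$ preserve them. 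The main obstacle is precisely the functoriality of $\Psi$: proving the semantic Substitution/Metasubstitution Lemma for clone interpretation in an arbitrary cartesian category, i.e.\ that $\alg{-}{S}$ transports metasubstitution to composition. This single compatibility underlies both soundness and preservation of composition, and must be proved with care by induction on term structure, carefully tracking the variable, substitution, and operator-map coherences of the clone.
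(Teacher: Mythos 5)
Your proposal is correct and follows essentially the same route as the paper: the same pair of functors (functorial model $\mapsto$ image of the classifying clone $\{(n)\}_{n\in\N}$; clone $\{S_n\}_{n\in\N}$ $\mapsto$ the product-on-objects functor acting by term interpretation on morphisms), the same identity-on-the-nose composite on the clone side, and the same product-comparison natural isomorphism on the functorial-model side. If anything you are more explicit than the paper about the two points it glosses over — soundness of clone interpretation for derivable (not merely axiomatic) equations and the semantic metasubstitution lemma underlying functoriality of $\Psi$ — though note that your appeal to the decomposition lemma of Section~4 only covers elementary terms, so the operator case must be handled by the structural induction you describe rather than by that lemma.
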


\begin{proof}
We provide an explicit description of the equivalence functors. Define
$$ \Upsilon \squad \colon \FMOD{M_\E, \cat{C}} \longrightarrow \E\mbox{-}\Clone(\cat{C})  $$
by mapping a cartesian functor $F \colon \M(\E) \To \cat{C}$ to the clone
$ \hat{F} \Def \{ F(n) \}_{n \in \N} $
whose structure maps are given by the image under $F$ of the structure maps of the canonical classifying clone $\langle n \rangle$ of $\M(\E)$. This makes $\hat{F}$ indeed a clone for the signature $\Sigma$, as, by Lemma ?, cartesian functors preserve clone structures. $\hat{F}$ is moreover a clone for the equational presentation $\E$, as it satisfies all equations in $\cat{C}$: given an equation $\Theta \Sep \Gamma \vdash s \eq t$ of $\E$, we have $F\langle[s]_\E\rangle = F\langle[t]_\E\rangle$ (since $\langle[s]_\E\rangle = \langle[t]_\E\rangle$), and therefore we get, by Lemma ?,  that $\alg{s}{\hat{F}} = F\alg{s}{\langle 0 \rangle} = F \alg{t}{\langle 0 \rangle} = \alg{t}{\hat{F}}$. 

On morphisms of $\FMOD{M_\E, \cat{C}}$, $\Upsilon$ is defined by mapping a monoidal natural transformation $\alpha \colon F \To G$ to $\{ \alpha_n \}_{n \in \N} \colon \{F(n)\}_{n \in \N} \To \{G(n)\}_{n \in \N}$. This is indeed a homomorphism of $\E$-clones because $\alpha$ is natural and the clone structure maps of $\hat{F}$ and $\hat{G}$ are the images of those of $\langle n \rangle$ under $F$ and $G$. Furthermore, note that $\Upsilon$ is functorial: the identity natural transformation $id^{(F)} \colon F \To F$ is mapped under $\Upsilon$ to $\{ id^{(F)}_n \}_{n \in \N}$, where each $id^{(F)}_n \colon F(n) \To F(n)$ is simply the identity morphism in $\cat{C}$. Similarly, for natural transformations $\alpha \colon F \To G$ and $\beta \colon G \To H$, the image of the composite $\beta \circ \alpha$ under $\Upsilon$ is $\{ (\beta \circ \alpha)_n \}_{n \in \N} = \{ \beta_n \circ \alpha_n \}_{n \in \N}$. 

In the other direction, define
$$ \bar{\Upsilon} \squad \colon \E\mbox{-}\Clone(\cat{C}) \longrightarrow \FMOD{M_\E, \cat{C}} $$
by mapping an $\E$-clone $\{ C_n \}_{n \in N}$ to the functor $F^{(C)} \colon \M(\E) \To \cat{C}$, which maps $\tuple{m}{k}$ to $C_{m_1} \times \cdots \times C_{m_k}$. For $\Theta = (\mcon{m}{k})$ and $\Gamma = (\vcon{x}{n})$, the image of the morphism 
$ \langle [\Theta \Sep \Gamma \vdash t ]_\E\rangle \colon \tuple{m}{k} \To (n) $
under $F^{(C)}$ is defined to be the interpretation $\alg{t}{C}$ of the term $t$ under the clone $C$. This definition respects the equivalence relation of $\E$ as given an equation $\Theta \Sep \Gamma \vdash_\E s \eq t$, we know that $\alg{s}{\langle n \rangle} = \alg{t}{\langle n \rangle}$ since $\langle n \rangle$ is an $\E$-clone, and therefore $F^{(C)}\langle[s]_\E\rangle = F^{(C)}\langle[t]_\E\rangle$ in $\cat{C}$. Note that $F^{(C)}$ is cartesian by definition. 

On morphisms of $\E\mbox{-}\Clone(\cat{C})$, $\bar{\Upsilon}$ is defined by mapping a clone homomorphism 
$\{ h_n \}_{n \in \N} \colon \{ C_n \}_{n \in \N} \To \{ D_n \}_{n \in \N}$
to $\bar{h} \colon F^{(C)} \To F^{(D)}$, with component at $\tuple{m}{k}$ given by $\bar{h}_{\tuple{m}{k}} = h_{m_1} \times \dots, \times h_{m_k}$. Because clone homomorphisms commute with the clone structure maps, we are ensured that $\bar{h}$ is a natural transformation. This can be seen more explicitly by induction on the term structure:
\begin{itemize}
\item[-] For $\langle [x_i]_\E\rangle \colon \tuple{m}{k} \To (n)$, the diagram
\begin{diagram}[small]
C_{m_1} \times \cdots \times C_{m_k} & \rTo^{\scriptstyle !} & 1 & \rTo^{\scriptstyle \iota_i^{(C)}} & C_n \\
\dTo^{\scriptstyle h_{m_1} \times \cdots \times h_{m_k}} & & \dTo^{\scriptstyle =} & & \dTo_{\scriptstyle h_n} \\
D_{m_1} \times \cdots \times D_{m_k} & \rTo^{\scriptstyle 1} & 1 & \rTo^{\scriptstyle \iota_i^{(D)}} & D_n
\end{diagram}
by uniqueness of the terminal map ! and because $h$ is a homomorphism of clones and hence commutes with the clone structure maps $\iota_i^{(-)}$.
\item[-] Similarly, for $\langle [\mvar{m}_i [t_1, \dots, t_{m_i}]]_\E \rangle \colon \tuple{m}{k} \To (n)$, the following diagram commutes
\begin{diagram}[small]
C_{m_1} \times \cdots \times C_{m_k} & \rTo^{\scriptstyle \big\langle \pi_i^{(C)}, F^{(C)}\langle[t_1]_\E\rangle, \dots, F^{(C)}\langle[t_{m_i}]_\E\rangle \big\rangle} & C_{m_i} \times (C_n)^{m_i} & \rTo^{\scriptstyle \varsigma^{(C)}_{m_i,n}} & C_n \\
\dTo^{\scriptstyle h_{m_1} \times \cdots \times h_{m_k}} & & \dTo^{\scriptstyle h_{m_i} \times (h_n)^{m_i}} & & \dTo_{\scriptstyle h_n} \\
D_{m_1} \times \cdots \times D_{m_k} & \rTo_{\scriptstyle \big\langle \pi_i^{(D)}, F^{(D)}\langle[t_1]_\E\rangle, \dots, F^{(D)}\langle[t_{m_i}]_\E\rangle \big\rangle} & D_{m_i} \times (D_n)^{m_i} & \rTo^{\scriptstyle \varsigma^{(D)}_{m_i,n}} & D_n 
\end{diagram}
by induction on $F^{(-)}\langle[t_j]_\E\rangle$ for all $j \in \card{m_i}$, by universality of the cartesian map $\pi_i^{(D)}$, and because $h_n$ commutes with the clone structure maps $\varsigma$.
\item[-] For $\omega \colon n_1, \dots, n_l$ and $\langle [ \omega\big( (\vvar{y}_1)t_1, \dots, (\vvar{y}_l)t_l \big) ]_\E \rangle$, the following diagram commutes for the same reasons as above:
\begin{diagram}[small]
C_{m_1} \times \cdots \times C_{m_k} & \rTo^{\scriptstyle \big\langle F^{(C)}\langle[t_1]_\E\rangle, \dots, F^{(C)}\langle[t_l]_\E\rangle \big\rangle} & C_{n+n_1} \times \cdots \times C_{n+n_l} & \rTo^{\scriptstyle \tilde{\omega}^{(C)}} & C_n \\
\dTo^{\scriptstyle h_{m_1} \times \cdots \times h_{m_k}} & & \dTo^{\scriptstyle h_{n+n_1} \times \cdots \times h_{n+n_l} } & & \dTo_{\scriptstyle h_n} \\
D_{m_1} \times \cdots \times D_{m_k} & \rTo_{\scriptstyle \big\langle F^{(D)}\langle[t_1]_\E\rangle, \dots, F^{(D)}\langle[t_l]_\E\rangle \big\rangle} & D_{n+n_1} \times \cdots \times D_{n+n_l} & \rTo^{\scriptstyle \tilde{\omega}^{(D)}} & D_n \\
\end{diagram}
\end{itemize}
That $\bar{\Upsilon}$ is functorial follows from the fact that natural transformations in $\FMOD{M_\E,\cat{C}}$ are monoidal. More precisely, an identity homomorphism of clones $\{ id_n \}_{n \in \N}$ is mapped under $\bar{\Upsilon}$ to the identity natural transformation with component at $\tuple{m}{k}$ given by $id_{m_1} \times \cdots \times id_{m_k}$, which is equal to $id_{\tuple{m}{k}}$. Similarly, a composite of clone homomorphisms $\{ (g \circ h)_n \}_{n \in \N}$ is mapped to $(\overline{g \circ h})$ with component at $\tuple{m}{k}$ given by
$$ (g \circ h)_{m_1} \times \cdots \times (g \circ h)_{m_k} \squad = \squad (g \circ h)_{\tuple{m}{k}} \squad = \squad g_{\tuple{m}{k}} \circ h_{\tuple{m}{k}} \quad . \\$$
The functors $\Upsilon$ and $\bar{\Upsilon}$ are indeed equivalences. A functorial model $F \colon \M(\E) \To \cat{C}$ is mapped under $\bar{\Upsilon} \circ \Upsilon$ to $F^{(\hat{F})} \colon \M(\E) \To \cat{C}$, which maps an object $\tuple{m}{k}$ to $F(m_1) \times \cdots \times F(m_k) \cong F\tuple{m}{k}$ and a morphism $\langle [\Theta \Sep \Gamma \vdash t]_\E \rangle$ to $\alg{t}{\hat{F}} = F \alg{t}{\langle 0 \rangle} = F \langle [\Theta \Sep \Gamma \vdash t]_\E \rangle$. A natural transformation $\alpha \colon F \To G$ is mapped under $\bar{\Upsilon} \circ \Upsilon$ to $\hat{\alpha} \colon F^{(\hat{F})} \To F^{(\hat{G})}$ and, because it is monoidal, has component at $\tuple{m}{k}$ given by $\hat{\alpha}_{\tuple{m}{k}} = \alpha_{m_1} \times \cdots \times \alpha_{m_k} = \alpha_{\tuple{m}{k}}$.
In the other direction, an $\E$-clone $\{C_n\}_{n \in \N}$ is mapped under $\Upsilon \circ \bar{\Upsilon}$ to the clone $\hat{F}^{(C)} = \{ F^{(C)}(n) \}_{n \in \N} = \{ C_n \}_{n \in \N}$, and an $\E$-clone homomorphism $\{ h_n \}_{n \in \N} \colon \{ C_n \}_{n \in \N} \To \{ D_n \}_{n \in \N}$ to $\{ \bar{h}_{(n)} \}_{n \in \N} = \{ h_n \}_{n \in \N}$.
\end{proof}

If we take the cartesian category $\cat{C}$ to be $\Set$, we immediately obtain the correspondence between set-theoretic functorial models, models for equational presentations, and set-theoretic clone structures.

\begin{thm}[Second-Order Semantic Categorical Algebraic Theory Correspondence]
For every second-order equational presentation $\E$, the category $\Mod(\E)$ of $\E$-models and the category of second-order functorial models $\FMOD{M_\E}$ are equivalent.
\end{thm}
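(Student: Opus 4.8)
The plan is to obtain this equivalence by composition, using the category of set-theoretic $\E$-clones as the intermediary between the two semantic universes. Two equivalences established immediately above do all the work: the preceding Proposition shows that for \emph{any} cartesian category $\cat{C}$ there is an equivalence $\E\mbox{-}\Clone(\cat{C}) \simeq \FMOD{M_\E, \cat{C}}$, witnessed by the explicit functors $\Upsilon$ and $\bar{\Upsilon}$; and the earlier Proposition relating concrete models to clones gives $\Mod(\E) \simeq \E\mbox{-}\Clone(\Set)$, via the passage from an $\E$-model $A$ to its induced clone $\hat{A} = \{ A(n) \}_{n \in \N}$, with soundness and completeness (Theorem 3.6) guaranteeing that satisfaction transfers faithfully in both directions.

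First I would instantiate the parametrised clone/model equivalence at the cartesian category $\cat{C} = \Set$. Since $\Set$ is cartesian this specialisation is legitimate, and by the very definition of set-theoretic functorial models we have $\FMOD{M_\E, \Set} = \FMOD{M_\E}$ on the nose; this yields $\E\mbox{-}\Clone(\Set) \simeq \FMOD{M_\E}$. I would then simply compose this with $\Mod(\E) \simeq \E\mbox{-}\Clone(\Set)$ to conclude $\Mod(\E) \simeq \FMOD{M_\E}$, noting that equivalences of categories are closed under composition.

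It is worth recording the resulting equivalence explicitly. It sends an $\E$-model $A$ first to its associated $\E$-clone $\hat{A} = \{ A(n) \}_{n \in \N}$ in $\Set$, and then, via $\bar{\Upsilon}$, to the functorial model $F^{(\hat{A})} \colon \M(\E) \To \Set$ which maps $\tuple{m}{k}$ to $A(m_1) \times \cdots \times A(m_k)$ and each morphism $\langle [t]_\E \rangle$ to the term interpretation $\alg{t}{\hat{A}}$; its quasi-inverse applies $\Upsilon$ and then the clone-to-model passage in the reverse direction. On morphisms the correspondence is equally transparent, since the monoidality of natural transformations in $\FMOD{M_\E}$ matches the componentwise description of clone homomorphisms.

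I expect no substantive obstacle to remain: all the real content — building the clone of a model and checking that equational satisfaction is preserved and reflected, and dually constructing a cartesian functor from a clone and verifying naturality of the induced transformations — has already been discharged in the two preceding Propositions. The only points requiring care are the two formal checks just described, namely that the instantiation $\cat{C} = \Set$ is valid and that the two intermediary notions of $\E$-clone in $\Set$ coincide, so that the composite functors are genuinely well-defined and inverse up to natural isomorphism.
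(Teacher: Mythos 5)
Your proposal is correct and is essentially the paper's own argument: the paper obtains the theorem precisely by instantiating the parametrised equivalence $\E\mbox{-}\Clone(\cat{C}) \simeq \FMOD{M_\E,\cat{C}}$ at $\cat{C} = \Set$ and composing with the equivalence $\Mod(\E) \simeq \E\mbox{-}\Clone(\Set)$ furnished by the earlier proposition together with soundness and completeness. Your explicit description of the composite functors (via $\hat{A}$, $\Upsilon$, and $\bar{\Upsilon}$) matches the constructions the paper has already set up, so nothing further is needed.
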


Using the Second-Order Syntactic Categorical Algebraic Theory Correspondence, we have the following equivalent formulation of the above semantic correspondence.

\begin{cor}
For every second-order algebraic theory $M \colon \M \To \Mlaw$, the category of second-order functorial models $\FMOD{M}$ and the category of algebraic models $\Mod(\IL(M))$ are equivalent.
\end{cor}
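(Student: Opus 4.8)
The plan is to obtain the statement as a direct consequence of the Second-Order Semantic Categorical Algebraic Theory Correspondence together with the Theory/Presentation Correspondence of Theorem 5.5, so that no fresh semantic argument is required. First I would apply the Semantic Correspondence to the particular presentation $\E = \IL(M)$, the internal language of the given theory. This yields an equivalence $\Mod(\IL(M)) \simeq \FMOD{M_{\IL(M)}}$ between the algebraic models of $\IL(M)$ and the functorial models of its classifying theory $M_{\IL(M)} \colon \M \To \M(\IL(M))$.

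Next I would invoke Theorem 5.5, which supplies the isomorphism $\nat_M \colon \Mlaw \To \M(\IL(M))$ of second-order algebraic theories, that is, an invertible cartesian functor satisfying $\nat_M \circ M = M_{\IL(M)}$. It remains to promote this isomorphism of theories to an equivalence of their functorial-model categories. For this I would use precomposition along $\nat_M$, sending a cartesian functor $G \colon \M(\IL(M)) \To \Set$ to $G \circ \nat_M \colon \Mlaw \To \Set$ and a natural transformation to its whiskering by $\nat_M$. Since $\nat_M$ is cartesian, $G \circ \nat_M$ is again cartesian and hence an object of $\FMOD{M}$; and since $\nat_M$ is invertible, precomposition with its inverse is a two-sided inverse, exhibiting an isomorphism $\FMOD{M_{\IL(M)}} \cong \FMOD{M}$. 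Composing the two then gives the desired equivalence $\Mod(\IL(M)) \simeq \FMOD{M}$.

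The only point requiring care is this last step: checking that precomposition along $\nat_M$ is well-defined on morphisms, namely that whiskering a (necessarily monoidal) natural transformation by the cartesian functor $\nat_M$ again yields a monoidal natural transformation, and that identities and composition are respected. These are immediate from the functoriality of whiskering and from the fact that natural transformations between product-preserving functors are automatically compatible with the cartesian structure. No genuinely new categorical content beyond Theorem 5.5, the Semantic Correspondence, and the composability of equivalences is involved; as the remark preceding the statement indicates, the corollary is essentially the Semantic Correspondence transported along the canonical isomorphism of theories $\nat_M$.
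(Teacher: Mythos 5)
Your proposal is correct and matches the paper's intended argument: the paper states the corollary as an immediate reformulation of the Semantic Correspondence (Theorem 7.4, instantiated at $\E = \IL(M)$) via the Syntactic Correspondence, which is precisely your transport along the theory isomorphism $\nat_M \colon \Mlaw \To \M(\IL(M))$ of Theorem 5.5. Your additional care about precomposition being well-defined on (automatically monoidal) natural transformations is the only verification the paper leaves implicit, and you resolve it correctly.
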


\subsection{Translational Semantics}

Second-order functorial semantics enables us to take a model of an algebraic theory in any cartesian category $\cat{C}$. This way of abstractly defining algebras for theories has a syntactic counterpart via syntactic translations, which we refer to as second-order \emph{translational semantics}. 

Consider two second-order equational presentations $\E$ and $\E '$, their corresponding classifying algebraic theories $M_\E \colon \M \To \M(\E)$ and $M_{\E '} \colon \M \To \M(\E ')$, and let $\trans \colon \E \To \E '$ be a second-order syntactic translation. Note that its induced algebraic translation $\M(\trans) \colon \M(\E) \To \M(\E ')$, which commutes with the theories $M_\E$ and $M_{\E '}$, is by definition a second-order functorial model of the theory $M_\E$ in the cartesian category $\M(\E ')$. The canonical notion of a morphism of (second-order) algebraic theories is thereby intuitively providing a model of one algebraic theory into another. 

From the categorical equivalence of the Syntactic Categorical Algebraic Theory Correspondence, second-order syntactic translations can be thought of as \emph{syntactic} notions of models of one equational presentation into another. Therefore, by explicitly defining the translation $\trans \colon \E \To \E '$, we implicitly provide a model of the presentation $\E$ in $\E '$. 

We have in this work reviewed first- and second-order set-theoretic semantics for equational presentations, as well as categorical semantics, and finally introduced second-order functorial semantics. Through the development of syntactic translations, we have thus introduced a less abstract, more concrete way of giving semantics to equational presentations. We refer to this as (second-order) \emph{Translational Semantics}.

\section{Concluding Remarks}
\label{conclusion}

We have incorporated second-order languages into universal algebra by developing a programme from the viewpoint of Lawvere's algebraic theories. 
The pinnacle of our work is the notion of \emph{second-order algebraic theory}, which we defined on top of a base category, the second-order theory of equality $\M$, representing the elementary operators and equations present in every second-order language. We showed that $\M$ can be described abstractly via the universal property of being the free cartesian category on an exponentiable object. 
At the syntactic level, we established the correctness of our definition by showing a categorical equivalence between second-order equational presentations and second-order algebraic theories. This equivalence, referred to as the Second-Order Syntactic Categorical Algebraic Theory Correspondence, involved distilling a notion of syntactic translation between second-order equational presentations that corresponds to the canonical notion of morphism between second-order algebraic theories. Syntactic translations provide a mathematical formalisation of notions such as encodings and transforms for second-order languages.  
On top of this syntactic correspondence, we furthermore established the Second-Order Semantic Categorical Algebraic Theory Correspondence. This involved generalising Lawvere's notion of functorial model of algebraic theories to the second-order setting. By this semantic correspondence, second-order functorial semantics correspond to the model theory of second-order universal algebra. 

With this theory in place, one is now in a position to: (i) consider constructions on second-order equational presentations in a categorical setting, and indeed the developments for Lawvere theories on lmits, colimits, and tensor product carry over to the second-order setting; (ii) investigate conservative-extension results for second-order equational presentations in a mathematical framework; and (iii) study Morita equivalence for second-order algebraic theories.

We conclude by outlining how the significant theory of \emph{algebraic functors} surrounding first-order algebraic theories extends to the second-order universe.

\subsection{Second-Order Algebraic Functors}

The concept of an algebraic functor arising from morphisms of Lawvere theories has been developed by Lawvere \cite{Lawvere2004} and revisited many times since then \cite{Borceux1994, AdamekRosickyVitale2009}. It is the canonical notion of morphism between algebraic categories.

\begin{defn}[Algebraic Categories and Functors]
A category is called \emph{algebraic} if it is equivalent to the category of functorial models $\FMod(L)$ for some algebraic theory $L \colon \F \To \Law$.
Let $F \colon L \To L'$ be an algebraic translation of (mono-sorted first-order) Lawvere theories $L \colon \F \To \Law$ and $L' \colon \F \To \Law '$. The functor
$ \FMod(F) \colon \FMod(L') \To \FMod(L) \colon G \mapsto G \circ F$
is called an \emph{algebraic functor}.
\end{defn}

We obtain the following commutative diagram, where the unlabelled arrows are the canonical (monadic) forgetful functors:
\begin{diagram}[small]
\FMod(L') & & \rTo^{\FMod(F)} & & \FMod(L)\\
& \rdTo^{\quad} &  & \ldTo^{\quad} & \\
& & \Set & &
\end{diagram}

A functor of algebraic categories is algebraic if and only if it is induced by a morphism of algebraic theories, making the two definitions equivalent. Moreover, it can be shown that algebraic functors have left adjoints. This is an immediate consequence of the Adjoint Lifting Theorem. 
The resulting \emph{algebraic adjunction} is in fact monadic, and we refer the reader to \cite{Maclane1998} and \cite{Borceux1994} for more details.

\begin{prop}
Let $F \colon L_1 \To L_2$ be an algebraic translation of algebraic theories $L_1 \colon \F \To \Law_1$ and $L_2 \colon \F \To \Law_2$. Then its induced algebraic functor $\FMod(F) \colon \FMod(L_2) \To \FMod(L_1)$ has a left adjoint $\widetilde{F} \colon \FMod(L_1) \To \FMod(L_2)$. 
\end{prop}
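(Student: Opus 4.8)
The plan is to realise both $\FMod(L_1)$ and $\FMod(L_2)$ as categories monadic over $\Set$, to place $\FMod(F)$ inside a commutative square over $\Set$, and then to invoke the Adjoint Lifting Theorem. Concretely, I would recall from the first-order semantic correspondence (the Proposition of Section~\ref{backg}) that each functorial-model category $\FMod(L_i)$ is isomorphic to the category $\Set^{\mon_i}$ of Eilenberg--Moore algebras for a finitary monad $\mon_i$ on $\Set$, and that the associated forgetful functor $U_i \colon \FMod(L_i) \To \Set$, namely evaluation at the generating object $1 \in \F$, is monadic.

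First I would check that $\FMod(F)$ commutes with these forgetful functors, that is, that the square
\begin{diagram}[small]
\FMod(L_2) & \rTo^{\FMod(F)} & \FMod(L_1) \\
\dTo^{U_2} & & \dTo_{U_1} \\
\Set & \rTo^{\mathrm{Id}_{\Set}} & \Set
\end{diagram}
commutes. This is immediate: for a model $G \colon \Law_2 \To \Set$ we have $U_1(\FMod(F)(G)) = (G \circ F)(1) = G(F1) = G(1) = U_2(G)$, using that $F$, being a morphism of Lawvere theories, commutes with the theory functors and is the identity on objects, so that $F1 = 1$. The same computation on morphisms shows that the square commutes strictly.

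With the square in place, the hypotheses of the Adjoint Lifting Theorem are met: the vertical functors $U_1$ and $U_2$ are monadic, the bottom functor $\mathrm{Id}_{\Set}$ trivially has a left adjoint (itself), and the top-left category $\FMod(L_2) \cong \Set^{\mon_2}$ has coequalizers of reflexive pairs --- indeed it is cocomplete, being the Eilenberg--Moore category of a finitary monad on $\Set$, i.e.\ an algebraic category. The Adjoint Lifting Theorem (see \cite{Maclane1998, Borceux1994}) then produces a left adjoint $\widetilde{F} \colon \FMod(L_1) \To \FMod(L_2)$ to $\FMod(F)$, as required.

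The only non-formal ingredient is the existence of reflexive coequalizers in $\FMod(L_2)$, which is precisely where cocompleteness of algebraic categories enters; everything else is a matter of unwinding definitions, so I expect no serious obstacle. I would note in passing that one might instead try to build $\widetilde{F}$ directly as a left Kan extension along $F$, but this would require separately verifying that the Kan extension of a product-preserving functor into $\Set$ remains product-preserving --- a check that the monadic route via the Adjoint Lifting Theorem conveniently absorbs.
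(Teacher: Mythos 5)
Your proposal is correct and takes essentially the same route as the paper: the paper also places $\FMod(F)$ in a commutative triangle over $\Set$ via the canonical monadic forgetful functors and obtains the left adjoint as an immediate consequence of the Adjoint Lifting Theorem, referring to Mac Lane and Borceux for details. Your write-up simply makes explicit the ingredients the paper leaves implicit, namely that the square commutes strictly because $F$ is identity-on-objects, and that $\FMod(L_2)$, being equivalent to the Eilenberg--Moore category of a finitary monad on $\Set$, has the reflexive coequalizers the theorem requires.
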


This left adjoint $\widetilde{F}$ is the essentially unique functor which preserves sifted colimits and makes the following diagram commute up to natural isomorphism.
\begin{diagram}[small]
\Law_1^\mathrm{op} & \rTo^{F^\mathrm{op}} & \Law_2^\mathrm{op} \\
\dTo^{Y_{L_1}} & & \dTo_{Y_{L_2}} \\
\FMod(L_1) & \rTo^{\widetilde{F}} & \FMod(L_2) 
\end{diagram}

The algebraic importance of these left adjoints is pointed out by Lawvere in his thesis \cite{Lawvere2004}. For example, the adjoint to the algebraic functor induced by an algebraic translation from the theory of monoids to the theory of rings essentially assigns to a monoid $M$ the monoid ring $Z[M]$ with integer coefficients. The fact that these adjoints form the commutative diagram above implies, for instance, that a free ring can be constructed either as the monoid ring of a free monoid, or as the tensor ring of a free abelian group. These are well-known facts from universal algebra, but given a more abstract formulation via algebraic functors and their adjoints.

Just as in the first-order case, every algebraic translation $F \colon M \To M'$ between second-order algebraic theories $M \colon \M \To \Mlaw$ and $M' \colon \M \To \Mlaw '$ contravariantly induces a \emph{second-order algebraic functor} $\FMOD{F} \colon \FMOD{M'}\To\FMOD{M} \squad ; \squad S \mapsto S \circ F$ between the corresponding categories of second-order functorial models. 

\begin{thm}
The algebraic functor $\FMOD{F} \colon \FMOD{M'} \To \FMOD{M}$ induced by a second-order algebraic translation $F \colon M \To M'$ has a left adjoint, and the resulting adjunction is monadic.
\end{thm}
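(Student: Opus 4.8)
The plan is to transport the statement across the Second-Order Semantic Categorical Algebraic Theory Correspondence so that it becomes a question about categories of second-order algebras, which are \emph{monadic over the presheaf category} $\Set^\ff$, and then to deploy the two classical tools already used in the first-order case: the Adjoint Lifting Theorem for the existence of the left adjoint, and Beck's monadicity theorem for the monadicity of the resulting adjunction.

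First I would use the Corollary to the semantic correspondence to replace $\FMOD{M}$ and $\FMOD{M'}$ by the equivalent categories of algebraic models $\Mod(\IL(M))$ and $\Mod(\IL(M'))$. The key structural observation is that, since $M$ and $M'$ are identity-on-objects, the algebraic translation $F$ is itself identity-on-objects, and hence $\FMOD{F} = (-) \circ F$ leaves the underlying presheaf of a functorial model unchanged. Under the equivalence, $\FMOD{F}$ is thus identified with the functor that restricts the algebraic model structure along the interpretation determined by $F$, and it commutes with the canonical forgetful functors $U \colon \Mod(\IL(M)) \To \Set^\ff$ and $U' \colon \Mod(\IL(M')) \To \Set^\ff$ of \cite{FiorePlotkinTuri1999, FioreHur2010}; that is, $U \circ \FMOD{F} = U'$.

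Next I would invoke the monadic presentation of second-order universal algebra: by \cite{FiorePlotkinTuri1999, FioreHur2010} the functors $U$ and $U'$ are monadic and the monads they induce on $\Set^\ff$ are accessible, so that $\Mod(\IL(M))$ and $\Mod(\IL(M'))$ are locally presentable, in particular cocomplete and equipped with coequalizers of reflexive pairs. Since $\FMOD{F}$ is a precomposition functor it preserves all limits, and the triangle $U \circ \FMOD{F} = U'$ over $\Set^\ff$ exhibits both legs as monadic; the Adjoint Lifting Theorem \cite{Borceux1994} then produces the desired left adjoint $\widetilde{F} \colon \FMOD{M} \To \FMOD{M'}$. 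Equivalently, one may observe that $\FMOD{F}$ is a continuous accessible functor between locally presentable categories and appeal to the adjoint functor theorem.

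It then remains to check the hypotheses of Beck's theorem for $\FMOD{F}$. Reflection of isomorphisms is immediate from $U \circ \FMOD{F} = U'$: if $\FMOD{F}$ inverts a morphism then so does $U' = U \circ \FMOD{F}$, and $U'$ reflects isomorphisms since it is monadic. For the coequalizer condition, any $\FMOD{F}$-split pair becomes, after applying $U$, a $U'$-split pair, because split coequalizers are absolute and therefore preserved by $U$; monadicity of $U'$ creates the coequalizer in $\Mod(\IL(M'))$, and comparing it with the coequalizer created by the monadic $U$ shows that $\FMOD{F}$ preserves it. With the left adjoint in hand, Beck's monadicity theorem \cite{Maclane1998} then yields that $\FMOD{F}$ is monadic. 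The main obstacle is the reduction itself: everything hinges on the fact that second-order model categories are monadic over $\Set^\ff$ via \emph{accessible} monads, so that the requisite reflexive coequalizers exist and the split-coequalizer computations may be carried out in $\Set^\ff$; once this cocompleteness is secured, the existence of the adjoint and its monadicity follow from standard categorical machinery exactly as in the first-order setting.
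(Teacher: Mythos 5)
Your proposal is correct and takes essentially the same route the paper intends: transport the statement along the Semantic Categorical Algebraic Theory Correspondence to categories of second-order models, which are monadic over $\Set^\ff$, then obtain the left adjoint from the Adjoint Lifting Theorem and monadicity from Beck's theorem, exactly as in the first-order case the paper cites. The paper only sketches this argument, so your additional details (identity-on-objects of $F$, strict commutation with the forgetful functors, reflexive coequalizers via local presentability, and the split-coequalizer verification) are a faithful elaboration of its intended proof rather than a departure from it.
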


Syntactic translations of second-order equational presentations similarly yield a notion of algebraic functor which is naturally isomorphic to the one introduced above. Observe that second-order syntactic signature translations behave essentially as natural transformations between the corresponding signature endofunctors and their induced monads: for second-order signatures $\Sigma_1$ and $\Sigma_2$, let $\sfun{\Sigma_1}$ be the signature endofunctor induced by $\Sigma_1$, and $\mon_{\Sigma_2}$ the (underlying functor of the) induced monad corresponding to $\Sigma_2$. More precisely, in the situation
$$ \xymatrix{\ar @{} [r] |{\displaystyle \perp} \Set^\ff \ar@(dl,dr)[]_{\displaystyle \sfun{\Sigma_2}} \ar@/^1pc/[r]^{}  & \Mod(\Sigma_2) \ar@/^1pc/[l]^{}  } $$
$\mon_{\Sigma_2}$ is the monad induced by the above adjunction, so that $\Alg{\mon_{\Sigma_2}} \squad \cong \squad \Mod(\Sigma_2)$. 

A translation $\trans \colon \Sigma_1 \rightarrow \Sigma_2$ induces a natural transformation $\alpha^\trans \colon \sfun{\Sigma_1} \rightarrow \mon_{\Sigma_2}$, which is \emph{strong} in the sense that

\vspace{-20pt}

\begin{diagram}[small]
\sfun{\Sigma_1}(X) \tensor Y & \rTo^{\displaystyle s_{\sfun{\Sigma_1}}} & \sfun{\Sigma_1}(X \tensor Y)\\
\dTo^{\displaystyle \alpha^\trans_X \tensor Y} & & \dTo_{\displaystyle \alpha^\trans_{X \tensor Y}} \\
\mon_{\Sigma_2}(X) \tensor Y & \rTo^{\displaystyle s_{\mon_{\Sigma_2}}} & \mon_{\Sigma_2}(X \tensor Y)
\end{diagram}
commutes for the canonical pointed strengths $s_{\sfun{\Sigma_1}}$ and $s_{\mon_{\Sigma_2}}$. 
Natural transformations induced in this way by syntactic translations contravariantly induce algebraic functors between categories of set-theoretic algebras. For $\trans \colon \Sigma_1 \To \Sigma_2$ a second-order translation with induced natural transformation $\alpha^\trans \colon \sfun{\Sigma_1} \To \mon_{\Sigma_2}$, let $A \in \Mod(\Sigma_2)$ be a $\Sigma_2$-model, with monoid structure $\nu_A \colon \y 1 \To A$ and $\varsigma_A \colon A \tensor A \To A$, and $\sfun{\Sigma_2}$-algebra structure map given by $\varphi_A \colon \sfun{\Sigma_2}A \To A $. Denote by $\squad \delta_A \colon \mon_{\Sigma_2}A \To A \squad$ the corresponding $\mon_{\Sigma_2}$-algebra structure map induced by the categorical equivalence $\squad \Mod(\Sigma_2) \squad \cong \squad \Alg{\mon_{\Sigma_2}}$. 
Composing this $\mon_{\Sigma_2}$-algebra structure map $\delta$ with natural transformations $\sfun{\Sigma_1} \To \mon_{\Sigma_2}$ essentially defines the mapping of algebraic functors. More precisely, a second-order signature translation $\trans \colon \Sigma_1 \To \Sigma_2$ yields the algebraic functor
$ \Mod(\tau) \colon \Mod(\Sigma_2) \To \Mod(\Sigma_1) $
by mapping $A \in \Set^\ff$ with structure maps $\nu_A \colon \y 1 \To A$, $\varsigma_A \colon A \tensor A \To A$, and $\varphi_A \colon \sfun{\Sigma_2}A \To A $ to the algebra with same underlying presheaf $A$ and same monoid maps $\nu_A$ and $\varsigma_A$, but with $\sfun{\Sigma_1}$-algebra structure map given by the composite $\delta_A \circ \alpha^\trans_A$.
This morphism is compatible with the monoid structure given by $\nu_A$ and $\varsigma_A$ because of the strength of the natural transformation $\alpha^\trans$ discussed above.

The compatibility of the monoid structure with the structure map of the signature endofunctor can be viewed as an abstract description of the compositionality of syntactic translations with substitution and metasubstitution. The algebraic functor $\Mod(\tau)$ clearly commutes with the canonical forgetful functors into $\Set^\ff$. Using a similar argument as in the first-order universe, we obtain a left adjoint to $\Mod(\tau)$, with the resulting adjunction being monadic.

Next, we use the notion of algebraic equational systems developed by Fiore and Hur in \cite{FioreHur2007, FioreHur2008a} to derive algebraic functors induced by syntactic translations of second-order equational presentations.

\begin{defn}[Equational System]
An \emph{equational system} $\mathbb{S}$ is given by a pair of functors $L,R \colon \Alg{F} \To \Alg{D}$ between categories of algebras for endofunctors over some base category $\cat{C}$. In the framework of equational presentations, the \emph{functorial signature} $F$ is a generalisation of the concept of endofunctor induced by an algebraic signature; the so-called \emph{functorial terms} $L,R$ generalise the notion of equation; and the endofunctor $D$ corresponds to the arity of the equation. The category $\Alg{\mathbb{S}}$ of algebras for the equational system $\mathbb{S}$ is given by the equaliser $\Alg{\mathbb{S}} \hookrightarrow \Alg{F}$ of $L,R$. More explicitly, an $\mathbb{S}$-algebra is simply an $F$-algebra $(A, a\colon FA \To A)$ such that $L(A,a)$ and $R(A,a)$ are equal $D$-algebras on $A$.
\end{defn}

For a second-order signature $\Sigma$, the equational systems formalism allows one to write

\vspace{-30pt}

\begin{diagram}[small]
\Mod(\Sigma) & \rInto^{\mathrm{eq}} & \Alg{\Sfun{\Sigma}} & \pile{\rTo^{} \\ \rTo^{}} & \Alg{\Gamma_\Sigma} \quad ,
\end{diagram}
where $\Sfun{\Sigma}(X) = \sfun{\Sigma}(X) + V + X \tensor X$, and the parallel pair encodes the equations of $\Sigma$-monoids. For a second-order equational presentation $\E = (\Sigma,E)$, we further have 

\vspace{-25pt}

\begin{diagram}[small]
& & \Mod(\E) & & \\
& & \dInto^{\mathrm{eq}} & & \\
\Alg{\Gamma_\Sigma} & \pile{\lTo{} \\ \lTo^{}} & \Alg{\Sfun{\Sigma}} & \pile{\rTo^{}\\ \rTo^{}} & \Alg{\Gamma_E} \quad ,
\end{diagram}
where the left parallel pair encodes the $\Sigma$-monoids (or substitution structure) as above, and the parallel pair to the right encodes the equations in $E$. We therefore get the equivalent equaliser diagram

\vspace{-25pt}

\begin{diagram}[small]
\Mod(\E) & \rInto^{\mathrm{eq}} & \Alg{\Sfun{\Sigma}} & \pile{\rTo^{} \\ \rTo^{}} & \Alg{(\Gamma_\Sigma + \Gamma_E)} \quad ,
\end{diagram}
so that in fact one has

\vspace{-25pt}

\begin{diagram}[small]
\Mod(\E) & \rInto^{\mathrm{eq}} & \Mod(\Sigma) & \pile{\rTo^{} \\ \rTo^{}} & \Alg{\Gamma_\E} \quad .
\end{diagram}

The category $\Alg{\mathbb{S}_\E}$ of algebras for $\mathbb{S}_\E$ is isomorphic to the category $\Mod(\E)$ of models for the equational presentation $\E$. Moreover, $\Alg{\mathbb{S}_\E}$ is a cocomplete, full reflective subcategory of $\Alg{\sfun{\Sigma}}$. The forgetful functor $\Alg{\mathbb{S}_\E} \To \Set^\ff$ has a left adjoint, and the resulting adjunction is monadic. We refer the reader to \cite{FioreHur2007, FioreHur2008a} for more details.

We use this framework to derive algebraic functors between categories of models for second-order equational presentations, or equivalently, for equational systems. To this end, let $\E_1 = (\Sigma_1,E_1)$ and $\E_2 = (\Sigma_2, E_2)$ be second-order equational presentations, and $\trans \colon \E_1 \To \E_2$ a syntactic translation. Consider the following diagram:

\vspace{-25pt}

\begin{diagram}[small]
& & \Mod(\E_2) & \rInto^{J_2} && \Mod(\Sigma_2) & \pile{\rTo^{L_2} \\ \rTo_{R_2}} && \Alg{\Gamma_{\E_2}} \\ & \ldTo \\ \Set^\ff & & \dDashto_{\displaystyle\Mod(\trans)}& & & \dTo_{\displaystyle \Mod(\trans ')} \\ & \luTo \\ 
& & \Mod(\E_1) & \rInto^{J_1} && \Mod(\Sigma_1) & \pile{\rTo^{L_1} \\ \rTo_{R_1}} && \Alg{\Gamma_{\E_1}}
\end{diagram}

Here, $\trans ' \colon \Sigma_1 \To \Sigma_2$ is the restriction of $\trans$ to the underlying signatures of $\E_1$ and $\E_2$, and  $\Mod(\trans ')$ is the induced algebraic functor $\Mod(\Sigma_2) \To \Mod(\Sigma_1)$, as derived above. 
$\Mod(\E_2)$ together with the composite functor $\Mod(\trans ') \circ J_2$ equalise the pair $L_1,R_1$, roughly because axioms of $\E_1$ are mapped via the syntactic translation $\trans$ to theorems of $\E_2$. Hence, one gets the unique functor $\Mod(\trans)$ making the above diagram commute. Furthermore, by the Adjoint Lifting Theorem and the monadicity result of Theorem 8.3, this functor will have a left adjoint, and the resulting adjunction is monadic. 

We refer to $\Mod(\trans) \colon \Mod(\E_2) \To \Mod(\E_1)$ as the second-order \emph{syntactic algebraic functor} induced by the syntactic translation $\trans \colon \E_1 \To \E_2$. Using the Second-Order Semantic Categorical Algebraic Theory Correspondence, this functor is naturally isomorphic to the composite

\vspace{-20pt}

\begin{diagram}[small]
\Mod(\E_2) \squad \cong \squad \FMOD{M_{\E_2}} & \rTo^{\FMOD{\M(\trans)}} &  \FMOD{M_{\E_1}} \squad \cong \squad \Mod(\E_1) \quad ,
\end{diagram} 
where for $i=1,2$, $M_{\E_i} \colon \M \To \M(\E_i)$ is the algebraic theory classifiying $\E_i$, $\M(\trans)$ is the algebraic translation induced by $\trans$, and $\FMOD{\M(\trans)}$ is its induced second-order algebraic functor.

\appendix

\section{Second-Order Substitution and Metasubstitution Lemmas}

\begin{lem}[Second-Order Substitution Lemma]
Given terms
$$ \Theta \sep \Gamma \vdash s_i \quad (1 \leq i \leq n), \qquad \Theta \sep \Gamma \vdash r_j \quad (1 \leq j \leq k), \qquad \mathit{and} \qquad \Theta \sep \vcon{x}{n}, \vcon{y}{k} \vdash t , $$
we have
$$ \Theta \sep \Gamma \vdash t \big\{ x_i \Def s_i \big\}_{i \in \card{n}} \big\{ y_j \Def r_j \big\}_{j \in \card{k}} \squad = \squad t \Big\{ x_i \Def s_i \big\{ y_j \Def r_j \big\}_{j \in \card{k}} \Big\}_{i \in \card{n}} \quad . $$
\end{lem}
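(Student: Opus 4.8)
The plan is to prove the identity by induction on the structure of the term $t$, unfolding the inductive clauses of capture-avoiding substitution in each case. Before beginning I would isolate the two auxiliary facts on which the argument turns: that substituting for a variable which does not occur free in a term leaves the term unchanged, and that the fresh variables introduced whenever a substitution is pushed under a binder may be chosen so as to avoid all the ambient contexts and to occur in none of the $s_i$ or $r_j$. Together with weakening, these guarantee that the substitutions for $\vvar{x}$ and $\vvar{y}$ commute with the renaming substitutions produced at binders, which is exactly what will let the induction hypothesis be applied underneath operators.

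For the base case, a variable of the context $\vcon{x}{n},\vcon{y}{k}$ is either some $x_p$ or some $y_q$, and in each instance the two prescribed substitutions are compared by directly reading off the variable clause of the definition: $x_p$ yields $s_p\{y_j \Def r_j\}_{j \in \card{k}}$ on both sides, while $y_q$ yields $r_q$. The metavariable case $t = \mvar{m}[t_1,\dots,t_m]$ is immediate, since substitution acts argumentwise: each side is $\mvar{m}$ applied to the corresponding substitutions of the $t_l$, and the induction hypothesis on each $t_l$ closes the case.

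The operator case $t = \omega\big( (\vvar{z}_1)t_1, \dots, (\vvar{z}_c)t_c \big)$ is where the real work lies, and I expect it to be the main obstacle. Here the operator clause pushes a substitution into each argument $t_a$ while $\alpha$-renaming the bound variables $\vvar{z}_a$ to fresh ones; on the left-hand side this happens twice in succession, whereas on the right-hand side it happens once, for the single composite substitution. The difficulty is purely the $\alpha$-renaming bookkeeping: I would choose the fresh variables coherently across the two computations and then invoke the freshness facts above to show that the renaming substitutions slide past those for $\vvar{x}$ and $\vvar{y}$, so that the bodies reached on the two sides literally coincide after renaming. Applying the induction hypothesis to each $t_a$ — now read in the variable context extended by $\vvar{z}_a$, with the substitutions extended by the identity on these fresh variables — then yields equality of the two sides and completes the induction.
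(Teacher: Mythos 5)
There is nothing in the paper itself to compare your argument against: the lemma is stated in Appendix~A with its proof explicitly deferred to the cited thesis of Mahmoud, so the only question is whether your argument is sound. It is, and it is the standard one: structural induction on $t$, with the contexts and substitutions universally quantified so that the induction hypothesis can be instantiated in extended contexts; the variable and metavariable cases read off the corresponding clauses of the definition, and all real work sits in the operator case, since the paper's definition of substitution pushes a substitution under a binder only after renaming the bound variables to fresh ones. Your plan there --- choose the fresh variables coherently on the two sides, use freshness and weakening to commute the renamings past $\{x_i \Def s_i\}_{i \in \card{n}}$ and $\{y_j \Def r_j\}_{j \in \card{k}}$, then apply the induction hypothesis to each argument with the substitutions extended by the fresh variables --- is exactly how that case is closed.

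One point you handle silently should be made explicit, because it concerns the statement being proved rather than the proof. As printed, the right-hand side $t\{x_i \Def s_i\{y_j \Def r_j\}_{j\in\card{k}}\}_{i\in\card{n}}$ provides substituents only for the $x_i$, so a variable $y_q$ occurring free in $t$ would be left untouched: for $t = y_q$ the literal right-hand side is $y_q$, not the $r_q$ that your base case computes (and, if $\Gamma$ is disjoint from the $y_j$, each inner substitution $s_i\{y_j \Def r_j\}_{j\in\card{k}}$ is vacuous as written, since $s_i$ is a term over $\Gamma$). What you are proving --- correctly --- is the intended reading, in which the right-hand side is the single simultaneous substitution $t\{x_i \Def s_i\{y_j \Def r_j\}_{j \in \card{k}},\; y_j \Def r_j\}$; your claim that ``$y_q$ yields $r_q$ on both sides'' presupposes it. State that reading at the outset, and your induction goes through as planned.
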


\begin{lem}[Substitution-Metasubstitution Lemma]
Given terms
$$ \mcon{m}{k} \sep \Gamma \vdash t_i \quad (1 \leq i \leq n), \qquad \Theta \sep \Gamma, \vvar{y}_j \vdash s_j \quad (1 \leq j \leq k),$$ 
$$ \mathit{and} \qquad \hspace{75pt} \mcon{m}{k}\sep \vcon{x}{n} \vdash t , \qquad \hspace{87pt} $$
we have
\begin{eqnarray*}
\Theta \sep \Gamma & \vdash & t \big\{ x_i \Def t_i \big\}_{i \in \card{n}} \big\{ \mvar{m}_j \Def (\vvar{y}_j) s_j \big\}_{j \in \card{k}} \\
& = & t \big\{ \mvar{m}_j \Def (\vvar{y}_j) s_j \big\}_{j \in \card{k}} \Big\{ x_i \Def t_i \big\{ \mvar{m}_j \Def (\vvar{y}_j) s_j \big\}_{j \in \card{k}} \Big\}_{i \in \card{n}} \quad . 
\end{eqnarray*}
\end{lem}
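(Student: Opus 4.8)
The asserted equation relates two terms each obtained by composing the structurally-recursive operations of substitution and metasubstitution, so the natural strategy is induction on the structure of the term $t$, with the statement taken in its fully general form (universally quantified over the contexts $\Gamma$, $\Theta$ and over the families $t_i$ and $s_j$) so that the induction hypothesis remains available for subterms sitting in extended contexts. Throughout I would abbreviate the metasubstitution $\{\mvar{m}_j \Def (\vvar{y}_j) s_j\}_{j \in \card{k}}$ by $\sigma$ and work up to $\alpha$-equivalence, choosing all bound variables fresh for the $t_i$, $s_j$ and their transforms so that no variable capture occurs. The three cases of the induction then follow the three term-formation rules.

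The variable case $t = x_l$ is immediate: the left-hand side reduces, by the defining clauses of substitution and then metasubstitution, to $t_l\sigma$, while on the right-hand side $x_l\sigma = x_l$ because metasubstitution fixes variables, so the outer substitution also yields $t_l\sigma$. The operator case $t = \omega\big((\vvar{z}_1) u_1, \dots, (\vvar{z}_r) u_r\big)$ is bookkeeping: both substitution and metasubstitution are defined to commute with $\omega$ and to descend into each argument $u_p$ underneath its binder $\vvar{z}_p$, extending the variable context by $\vvar{z}_p$. Hence both sides reduce to $\omega$ applied to the arguments transformed by the corresponding composite operation, and the generalised induction hypothesis applied to each $u_p$ in the extended context closes the case, the only subtlety being the freshness of the $\vvar{z}_p$, which is guaranteed by the capture-avoidance built into the definitions.

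The metavariable case $t = \mvar{m}_l[u_1, \dots, u_{m_l}]$ is the crux, and I expect it to be the principal obstacle. On the left one first substitutes to obtain $\mvar{m}_l\big[u_1\{x_i \Def t_i\}, \dots, u_{m_l}\{x_i \Def t_i\}\big]$ and then applies $\sigma$, which by the metavariable clause of metasubstitution unfolds the head into $s_l\big\{ y^{(l)}_p \Def \big(u_p\{x_i \Def t_i\}\big)\sigma \big\}_{p \in \card{m_l}}$, where $\vvar{y}_l = y^{(l)}_1, \dots, y^{(l)}_{m_l}$ are the binders of $s_l$; the induction hypothesis on each $u_p$ rewrites the arguments as $(u_p\sigma)\{x_i \Def t_i\sigma\}$. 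On the right one first applies $\sigma$ to get $s_l\{ y^{(l)}_p \Def u_p\sigma \}_{p \in \card{m_l}}$ and only then performs the outer substitution $\{x_i \Def t_i\sigma\}$. Reconciling the two orders is exactly an instance of the Second-Order Substitution Lemma applied to $s_l$: it merges the nested variable substitutions $s_l\{ y^{(l)}_p \Def u_p\sigma \}\{x_i \Def t_i\sigma\}$ into the single substitution $s_l\{ y^{(l)}_p \Def (u_p\sigma)\{x_i \Def t_i\sigma\} \}$, using that the variables $x_i$ occur in $s_l$ only through its substituted arguments $u_p\sigma$ and not in its ambient context. Making the contexts match precisely — tracking the weakening of $s_l$'s variable context by the $x_i$ and the freshness of the abstracted binders — is the delicate point, and it is here that the Second-Order Substitution Lemma does the essential work.
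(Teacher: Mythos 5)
Your proposal is correct. There is, however, nothing in the paper to compare it against: the paper only states this lemma in Appendix~A and explicitly defers the detailed proof to Mahmoud's thesis, so no in-paper argument exists. Your argument — structural induction on $t$ in its generalised form, with the variable and operator cases discharged by the defining clauses of the two substitution operations (plus capture-avoidance), and the metavariable case, correctly identified as the crux, resolved by invoking the Second-Order Substitution Lemma to collapse the nested variable substitutions on $s_l$ (legitimately, since the $x_i$ are absent from the ambient context of $s_l$) — is precisely the standard proof one would expect to find there.
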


\begin{lem}[Metasubstitution Lemma I]
Given terms
$$ \Theta \sep \Gamma, \vvar{x}_i \vdash r_i \quad (1 \leq i \leq k), \qquad \Theta \sep \Gamma, \vvar{y}_j \vdash s_j \quad (1 \leq j \leq l),$$ 
$$ \mathit{and} \qquad \hspace{75pt} \mcon{m}{k}, \mcon{n}{l} \sep \Gamma \vdash t , \qquad \hspace{87pt} $$
we have 
\begin{eqnarray*}
\Theta \sep \Gamma & \vdash & t \big\{ \mvar{m}_i \Def (\vvar{x}_i) r_i \big\}_{i \in \card{k}} \big\{ \mvar{n}_j \Def (\vvar{y}_j) s_j \big\}_{j \in \card{l}} \\
& = & t \big\{ \mvar{n}_j \Def (\vvar{y}_j) s_j \big\}_{j \in \card{l}} \Big\{ \mvar{m}_i \Def (\vvar{x}_i) r_i \big\{ \mvar{n}_j \Def (\vvar{y}_j) s_j \big\}_{j \in \card{l}} \Big\}_{i \in \card{k}} \quad . 
\end{eqnarray*}
\end{lem}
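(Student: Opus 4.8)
The plan is to prove this associativity law by structural induction on the term $t$, following the inductive clauses of metasubstitution recalled before the statement. The three cases of the term grammar — a variable, a parameterised metavariable $\mvar{p}[\dots]$, and an operator applied to abstracted subterms — are treated in turn, and the workhorse throughout is the Substitution-Metasubstitution Lemma, which controls exactly how the variable substitutions created by unfolding a metavariable commute past a pending metasubstitution. Disjointness of the two metavariable blocks $\{\mvar{m}_i\}_{i \in \card{k}}$ and $\{\mvar{n}_j\}_{j \in \card{l}}$, together with the typing of the replacement families, will be used to discard spurious metasubstitutions.

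The two easy cases come first. For $t = x$ a variable, metasubstitution fixes variables, so both sides reduce to $x$. For $t$ an operator applied to abstractions, metasubstitution distributes over the operator on both sides; the claim then follows by applying the induction hypothesis to each argument, the bound variables being inert since they are disjoint from the metavariables eliminated.

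The substance is the metavariable case $t = \mvar{p}[u_1, \dots]$, which I would split according to whether the head $\mvar{p}$ lies in $\{\mvar{m}_i\}_{i \in \card{k}}$, in $\{\mvar{n}_j\}_{j \in \card{l}}$, or in $\Theta$. If $\mvar{p} \in \Theta$, both metasubstitutions merely recurse into the arguments, and the identity reduces to the induction hypothesis applied to each $u_a$. If $\mvar{p} = \mvar{m}_p$, unfolding the first metasubstitution on the left yields $r_p$ with its bound variables $\vvar{x}_p$ substituted by the metasubstituted arguments; the second metasubstitution then confronts a variable-substitution-followed-by-metasubstitution, which the Substitution-Metasubstitution Lemma rewrites into precisely the shape obtained by unfolding the right-hand side, the residual discrepancy buried in the arguments being resolved by the induction hypothesis on each $u_a$. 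The case $\mvar{p} = \mvar{n}_q$ is dual; here one additionally uses that none of the $\mvar{m}_i$ occur in $s_q$ — a consequence of the typing $\Theta \Sep \Gamma, \vvar{y}_q \vdash s_q$ — so that the outer metasubstitution $\{\mvar{m}_i \Def \dots\}$ acts as the identity on $s_q$ and only the argument block is affected.

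The main obstacle is the bookkeeping in this metavariable case: one must align, across the equation, an inner metasubstitution that fires \emph{immediately} on the left with one that fires only \emph{after} the arguments have been processed on the right. The decisive move is to spot each pattern ``substitute arguments for bound variables, then metasubstitute'' and to commute it via the Substitution-Metasubstitution Lemma, after which both sides become syntactically identical once the induction hypothesis is invoked on the subterms $u_a$. No genuinely new idea is required beyond disciplined tracking of the nested substitutions and the disjointness of the two metavariable contexts.
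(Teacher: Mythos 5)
Your proof is correct. Note that the paper itself contains no proof of this lemma: it is only stated in Appendix A, with the detailed proof deferred to \cite{Mahmoud2011}, so there is no in-paper argument to compare against; your route --- structural induction on $t$, with the variable and operator cases immediate, the Substitution-Metasubstitution Lemma commuting the ``substitute arguments for bound variables, then metasubstitute'' pattern in the metavariable case, and disjointness of the metavariable blocks discharging the stray metasubstitutions --- is exactly the standard argument that the cited source carries out, and each step checks out against the inductive clauses of metasubstitution given in the paper. Two minor observations: since the metavariable context of $t$ is precisely $\mcon{m}{k}, \mcon{n}{l}$, your case $\mvar{p} \in \Theta$ never arises (it is harmless, and would only be needed if one generalised the context of $t$); and under the literal typing $\Theta \Sep \Gamma, \vvar{x}_i \vdash r_i$ no $\mvar{n}_j$ can occur in $r_i$, so the inner metasubstitution $r_i \{ \mvar{n}_j \Def (\vvar{y}_j) s_j \}_{j \in \card{l}}$ on the right-hand side is itself vacuous --- your argument never uses this, so it in fact establishes the stronger reading in which the $r_i$ are typed in the extended context $\Theta, \mcon{n}{l} \Sep \Gamma, \vvar{x}_i$, which is presumably what the statement intends.
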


\begin{lem}[Metasubstitution Lemma II]
Given terms
$$ \mcon{m}{k} \sep \Gamma \vdash t \qquad \mathit{and} \qquad \mcon{m}{k} \sep \Gamma, \ivcon{x}{m}{i} \vdash \mvar{m}_i [\ivcon{x}{m}{i}] $$
for $1 \leq i \leq k$, we have
$$ \mcon{m}{k} \sep \Gamma \vdash t \big\{ \mvar{m}_i \Def (\vvar{x}_i) \mvar{m}_i [\ivcon{x}{m}{i}] \big\}_{i \in \card{k}} = t \quad . $$
\end{lem}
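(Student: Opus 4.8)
The plan is to argue by structural induction on the term $t$, following the three formation rules (variables, metavariables, operators). Write $\sigma$ for the metasubstitution $\big\{ \mvar{m}_i \Def (\ivcon{x}{m}{i}) \mvar{m}_i [\ivcon{x}{m}{i}] \big\}_{i \in \card{k}}$, so that the claim is precisely $t\sigma = t$. In each case I would unfold the corresponding defining clause of metasubstitution and then appeal to the induction hypothesis on the immediate subterms.

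For the base case $t = x$, the variable clause of metasubstitution gives $x\sigma = x$ directly, so there is nothing to prove. For an operator term $t = \omega\big( \dots, (\vvar{y})s, \dots \big)$, the operator clause distributes $\sigma$ over the arguments to yield $\omega\big( \dots, (\vvar{y})\,s\sigma, \dots \big)$; applying the induction hypothesis $s\sigma = s$ to each argument body then recovers $t$. No freshness side condition intervenes here, since the operator clause of metasubstitution simply pushes $\sigma$ inward.

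The only case requiring care is the metavariable term $t = \mvar{m}_l[s_1, \dots, s_{m_l}]$. Here the metavariable clause of metasubstitution produces $t_l\big\{ x^{(l)}_j \Def s_j\sigma \big\}_{j \in \card{m_l}}$, where the abstracted body substituted for $\mvar{m}_l$ is $t_l = \mvar{m}_l[\ivcon{x}{m}{l}]$. The key observation is that $t_l$ is just $\mvar{m}_l$ applied to exactly the bound variables $\ivcon{x}{m}{l}$ in order; hence the metavariable clause of ordinary substitution replaces the $j$-th argument $x^{(l)}_j$ by $s_j\sigma$, giving $\mvar{m}_l[s_1\sigma, \dots, s_{m_l}\sigma]$. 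The induction hypothesis $s_j\sigma = s_j$ then collapses this to $\mvar{m}_l[s_1, \dots, s_{m_l}] = t$, completing the induction.

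I do not expect a genuine obstacle: the entire content is the bookkeeping in the metavariable case, where one must recognise that substituting $s_j\sigma$ for precisely the variables that occur as the arguments of $\mvar{m}_l$ in the identity abstraction undoes that abstraction, and that the body is binder-free so no capture arises. The statement is the unit law for metasubstitution, expressing that this particular family of abstracted terms acts as the identity; indeed it is exactly what is needed to establish that the tuple $\big\langle \mvar{m}_i[\ivcon{x}{m}{i}] \big\rangle_{i \in \card{k}}$ is a categorical identity in $\M$.
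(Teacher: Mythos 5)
Your proof is correct: the lemma is established by exactly this structural induction on $t$, with the variable and operator cases unfolding trivially and the metavariable case handled precisely as you describe --- the metasubstitution clause produces $\mvar{m}_l[x^{(l)}_1,\dots,x^{(l)}_{m_l}]\big\{x^{(l)}_j \Def s_j\sigma\big\}_{j \in \card{m_l}}$, the variable/metavariable clauses of ordinary substitution collapse this to $\mvar{m}_l[s_1\sigma,\dots,s_{m_l}\sigma]$, and the induction hypothesis finishes. Note that the paper itself does not include a proof of this lemma (it defers to \cite{Mahmoud2011}), but the argument there is this same routine induction, so your proposal matches the intended proof; the only point worth making explicit is that the induction hypothesis must be stated for arbitrary variable contexts $\Gamma$, since the operator case applies it to subterms in the extended context $\Gamma, \vvar{y}$.
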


\section{Proof of the Second-Order Theory/Presentation Correspondence}

We prove the correspondence via an explicit description of the isomorphism and its inverse. Define the identity-on-objects functor $ \nat_M \colon \Mlaw \To \M(\IL(M)) $
by mapping $f \colon \tuple{m}{k} \To (n)$ of $\Mlaw$ to 
$ \big\langle  \big[ \mcon{m}{k} \Sep \vcon{x}{n} \vdash \big[\term_f \big]_{\IL(M)} \big\rangle \colon \tuple{m}{k} \To (n) $.
Functoriality of $\nat_M$ is implied by the equational theory of $\IL(M)$. More precisely, the identity $id^\Mlaw_{\tuple{m}{k}}$ on $\tuple{m}{k}$ in $\Mlaw$ is mapped to the $k$-tuple of equivalence classes of
\begin{eqnarray*}
\mcon{m}{k} \Sep \ivcon{x}{m}{i} & \vdash & \term_{\pi^{(\Mlaw)}_i}  \\
& = & \term_{M(\pi^{(\M)}_i)}  \\
& = & \term_{M \langle \mvar{m}_i [\ivcon{x}{m}{i}] \rangle} \\
& \stackrel{\E 1}{\eq} & \mvar{m}_i [\ivcon{x}{m}{i}] \quad ,
\end{eqnarray*}
for $1 \leq i \leq k $ and $\pi^{(-)}_i \colon \tuple{m}{k} \To (m_i)$ the canonical projection in $-$, which makes the above tuple indeed the identity in $\M(\IL(M))$. Similarly, preservation of composition is a consequence of $(\E 2)$ of $\IL(M)$. Consider, without loss of generality, the morphisms $\langle f_1, \dots, f_l \rangle \colon \tuple{m}{k} \To \tuple{n}{l}$ and $g \colon \tuple{n}{l} \To (n)$ of $\Mlaw$. Then $\nat_M (g) \circ \nat_M (\langle f_1, \dots, f_l \rangle)$ is given by the equivalence class of 
\begin{eqnarray*}
\mcon{m}{k} \Sep \vcon{x}{n} & \vdash & \term_g \big\{ \mvar{n}_i \Def (\vvar{y}_i) \term_{f_i} \big\}_{i \in \card{l}}\\
& \stackrel{\E 2}{\eq} & \term_{g \circ \langle f_1, \dots, n_l \rangle} \quad ,
\end{eqnarray*}
making $\nat_M (g) \circ \nat_M (\langle f_1, \dots, f_l \rangle) = \nat_M (g \circ \langle f_1, \dots, n_l \rangle)$. 
This definition is strong enough to yield an algebraic translation from $M \colon \M \To \Mlaw$ to the classifying algebraic theory $M_{\IL(M)} \colon \M \To \M(\IL(M))$, since for any $\langle t \rangle \colon \tuple{m}{k} \To (n)$ in $\M$, the morphism $M\langle t \rangle \colon \tuple{m}{k} \To (n)$ in $\Mlaw$ is mapped under $\nat_M$ to the equivalence class of 
$ \mcon{m}{k} \Sep \vcon{x}{n} \vdash \term_{M\langle t \rangle}$,
which by $(\E 1)$ is provably equal to $t$, whose equivalence class is the image of $t$ under $M_{\IL(M)}$. 

In the other direction, define the identity-on-objects mapping
$ \natbar_M \colon \M(\IL(M)) \To \Mlaw $
by induction on the structure of representatives of equivalence classes $[-]_{\IL(M)}$ as follows:
\begin{itemize}
\item[-] $\big[\mcon{m}{k} \Sep \vcon{x}{n} \vdash x_i\big]_{\IL(M)}$ is mapped to 
\begin{diagram}[small]
\tuple{m}{k} & \rTo^{!^{(\Mlaw)}} & () & \rTo^{\lam(\pi_i^{(\Mlaw)} \circ \cong)} & (n) \quad .
\end{diagram}
\item[-] $\big[\mcon{m}{k} \Sep \vcon{x}{n} \vdash \mvar{m}_i [t_1, \dots, t_{m_i}]\big]_{\IL(M)}$ is mapped to 
\begin{diagram}[small]
\tuple{m}{k} & \rTo^{\big\langle \pi_i^{(\Mlaw)}, \natbar_M ([t_1]_{\IL(M)}), \dots, \natbar_M ([t_{m_i}]_{\IL(M)}) \big\rangle} & (m_i, n^{m_i}) &\rTo^{\varsigma^{(\Mlaw)}_{m_i,n}} & (n) \quad .
\end{diagram} 
\item[-] For $f \colon \tuple{n}{l} \To (j)$ in $\Mlaw$, 
$$ \big[\mcon{m}{k} \Sep \vcon{x}{n} \vdash \omega_f \big( (\vvar{y}_1) t_1, \dots, (\vvar{y}_l) t_l, s_1, \dots, s_j \big) \big]_{\IL(M)} $$
is mapped under $\natbar_M$ to the composite
\begin{diagram}[small]
\tuple{m}{k} & \rTo^{ \big\langle \natbar_M[t_1]_{\IL(M)}, \dots, \natbar_M[t_l]_{\IL(M)}, \natbar_M[s_1]_{\IL(M)}, \dots, \natbar_M[s_j]_{\IL(M)} \big\rangle} & (n+n_1, \dots, n+n_l, n^j) \\ 
& & \dTo^{\e{(0)^n}{\big(\ev_j \circ (f \times (0)^j)\big)}}\\
& & (n) 
\end{diagram}
\end{itemize}
Equivalence classes of elementary terms $s$ are simply mapped to $M\langle s \rangle$ under $\natbar_M$. 
We show that the mapping $\natbar_M$ is: (i) well-defined, (ii) functorial, and (iii) an algebraic translation $\M(\IL(M)) \To \Mlaw$. 

(i) To verify that $\natbar_M$ is well-defined, we show that equal terms (that is representatives of equivalence classes $[-]_{\IL(M)}$) according to axioms $(\E 1)$ and $(\E 2)$ of $\IL(M)$ are mapped under $\natbar_M$ to equal morphisms of $\Mlaw$. Consider axiom $(\E 1)$, and let $\langle s \rangle \colon \tuple{m}{k} \To (n)$ be a morphism of $\M$. Then the image of $\big[ \term_{M\langle s \rangle} \big]_{\IL(M)}$ under $\natbar_M$ is the composite 

\vspace{-20pt}

\begin{diagram}[small]
\tuple{m}{k} & \rTo^{\lam(id_{(m_1, \dots, m_k, 0^n)})} & \e{(0)^n}{(m_1, \dots, m_k, 0^n)} & \rTo^{\e{(0)^n}{\big( \ev_n \circ (M\langle s \rangle \times (0)^n) \big)}} & (n) \quad ,
\end{diagram}
which is simply $M\langle s \rangle$, and is in turn the image of $\langle s \rangle$ under $\natbar_M$ as $s$ is an elementary term. For the axiom $(\E 2)$, let $g \colon \tuple{n}{l} \To (n)$, $h \colon \tuple{m}{k} \To (n)$, and $f_i \colon \tuple{m}{k} \To (n_i)$ (for $1 \leq i \leq l$) be morphisms of $\Mlaw$ such that $g \circ \langle f_1, \dots, f_l \rangle = h$. Then 
\begin{eqnarray*}
& \quad & \natbar_M \Big( \big[ \term_g \big\{ \mvar{m}_i \Def (\vvar{x_i}) \term_{f_i} \big\}_{i \in \card{l}} \big]_{\IL(M)} \Big) \\ 
& = & \big( \e{(0)^n}{\big(\ev_n \circ \big( g \times (0)^n \big)\big)} \big) \circ \big( \e{(0)^n}{\big(\ev_n \circ \big(\langle f_1, \dots, f_l \rangle \times (0)^n \big)\big)} \big) \circ \lam (id_{(m_1, \dots, m_k, 0^n)}) \\
& = &  \big( \e{(0)^n}{\big(\ev_n \circ \big( (g \circ\langle f_1, \dots, f_l \rangle) \times (0)^n \big)\big)} \big) \circ \lam (id_{(m_1, \dots, m_k, 0^n)}) \\
& = & \big( \e{(0)^n}{\big(\ev_n \circ \big(h \times (0)^n \big)\big)} \big) \circ \lam (id_{(m_1, \dots, m_k, 0^n)}) \\
& = & \natbar_M \Big( \big[ \term_h  \big]_{\IL(M)} \Big) \quad .
\end{eqnarray*}

(ii) For the identity condition of functoriality, note that the identity in $\M(\IL(M))$ is given by the equivalence class of an elementary term, and by definition, a morphism $f = \langle [t]_{\IL(M)} \rangle$ of $\M(\IL(M))$, for $t$ an elementary term, is simply mapped to $M(\langle t \rangle)$ under $\natbar_M$. Therefore, for any $\tuple{m}{k}$ in $\M(\IL(M))$, and since $M$ is a functor, we have that
$\natbar_M\big(id^{\M(\IL(M))}_{\tuple{m}{k}}\big) = M\big(id^\M_{\tuple{m}{k}}\big) = id^\Mlaw_{\tuple{m}{k}} $,
where the superscript in $id^{\cat{C}}$ identifies the category $\cat{C}$ the identity is being taken in. For compositionality, note that, by its definition, $\natbar_M$ commutes with metasubstitution. More precisely, from the equational theory of $\IL(M)$, any morphism of $\M(\IL(M))$ can be written as $[\term_h]_{\IL(M)}$, for $h = g\circ f$ a morphism of $\Mlaw$. By definition, this is mapped under $\natbar_M$ to $\natbar_M[\term_g]_{\IL(M)} \circ \natbar_M[\term_f]_{\IL(M)} $.

(iii) The functor $\natbar_M$ is an algebraic translation. This is an immediate consequence of the fact that it maps a morphism $\langle [s]_{\IL(M)} \rangle$, for $s$ elementary, to $M\langle s \rangle$, therefore making 
$\natbar_M \big( M_{\IL(M)} (\langle s \rangle)\big) = M\langle s \rangle $. 

The algebraic translations $\nat_M$ and $\natbar_M$ are mutually inverse, which is trivial on their restrictions on objects. The image of a morphism $f \colon \tuple{m}{k} \To (n)$ of $\Mlaw$ under $\natbar_M \circ \nat_M$ is given by
\begin{diagram}[small]
\tuple{m}{k} & \rTo^{\lam (id_{(m_1, \dots, m_k, 0^n)})} & \e{(0)^n}{(m_1, \dots, m_k, 0^n)} & \rTo^{\e{(0)^n}{(\ev_n \circ (f \times (0)^n))}} & (n)
\end{diagram} 
which is equal to $\lam\big( \ev_n \circ (f \times (0)^n) \big)$, which is simply $f$.
In the other direction, we show, by induction on the structure of the term $t$, that for a morphism $\langle [t]_{\IL(M)} \rangle$, 
$(\nat_M \circ \natbar_M)\langle [t]_{\IL(M)} \rangle = \langle [t]_{\IL(M)} \rangle$:
\begin{itemize}
\item[-] For $\mcon{m}{k} \Sep \vcon{x}{n} \vdash x_i$, $(\nat_M \circ \natbar_M)\langle [x_i]_{\IL(M)} \rangle$ is given by the single tuple of the equivalence class of the term 
$ \mcon{m}{k} \Sep \vcon{x}{n} \vdash \term_{M\langle x_i \rangle}$,
which by axiom $(\E 1)$ of $\IL(M)$ is equal to $x_i$.
\item[-] The image of $\big\langle \big[ \mvar{m}_i [t_1, \dots, t_{m_i}] \big]_{\IL(M)} \big\rangle \colon \tuple{m}{k} \To (n)$ under $\nat_M \circ \natbar_M$ is given, by induction on $t_1, \dots, t_{m_i}$, by the single tuple containing the equivalence class of the term
\begin{eqnarray*}
\mcon{m}{k} \Sep \vcon{x}{n} &\vdash& \term_{M \big\langle \mvar{m}_i \big[ \mvar{n}_1 [\vvar{x}], \dots, \mvar{n}_{m_i}[\vvar{x}] \big] \big\rangle} \big\{ \mvar{m}_i \Def (\vvar{y}_i) \term_{M \langle \mvar{m}_i[\vvar{y}_i] \rangle} \big\} \\
& \quad & \hspace{106pt} \big\{ \mvar{n}_j \Def (\vvar{x}) t_j \big\}_{j \in \card{m_i}} \\
& \stackrel{\E 1}{\eq} & \mvar{m}_i \big[ \mvar{n}_1 [\vvar{x}], \dots, \mvar{n}_{m_i}[\vvar{x}] \big]  \big\{ \mvar{m}_i \Def (\vvar{y}_i) \mvar{m}_i[\vvar{y}_i] \big\} \\
& \quad & \hspace{115pt} \big\{ \mvar{n}_j \Def (\vvar{x}) t_j \big\}_{j \in \card{m_i}} \\
& = & \mvar{m}_i [t_1, \dots, t_{m_i}] \quad .
\end{eqnarray*}
\item[-] For $f \colon \tuple{n}{l} \To (j)$ in $\Mlaw$, the image of 
$$ \big\langle \big[ \omega_f \big( (\vvar{y_1})t_1, \dots, (\vvar{y_l}) t_l, s_1, \dots, s_j \big) \big]_{\IL(M)} \big\rangle \colon \tuple{m}{k} \To (n) $$
under $\nat_M \circ \natbar_M$ is the single tuple containing the equivalence class of the term
\begin{eqnarray*}
\mcon{m}{k} \Sep \vcon{x}{n} & \vdash & \term_{\e{(0)^n}{\big(\ev_j \circ (f \times (0)^j)\big)}} \big\{ \mvar{n}_p \Def (\vvar{y}_p) t_p \big\}_{p \in \card{l}} \\
& \quad & \hspace{80pt} \big\{ \mvar{n}'_q \Def (\vvar{x}) s_q \big\}_{q \in \card{j}} \\
& \eq & \term_f \big\{ z_i \Def \mvar{n}'_i [\vcon{x}{n}] \big\}_{i \in \card{j}} \\
& \quad & \hspace{10pt} \big\{ \mvar{n}_p \Def (\vvar{y}_p) t_p \big\}_{p \in \card{l}} \big\{ \mvar{n}'_q \Def (\vvar{x}) s_q \big\}_{q \in \card{j}} \\
& = & \omega_f \big( (\vvar{y_1}) \mvar{n}_1 [\vvar{y_1}], \dots, (\vvar{y_l}) \mvar{n}_l [\vvar{y_l}], \vcon{z}{j} \big)  \\
& \quad & \big\{ z_i \Def \mvar{n}'_i [\vcon{x}{n}] \big\}_{i \in \card{j}} \\
& \quad &  \big\{ \mvar{n}_p \Def (\vvar{y}_p) t_p \big\}_{p \in \card{l}} \big\{ \mvar{n}'_q \Def (\vvar{x}) s_q \big\}_{q \in \card{j}} \\
& = & \omega_f \big( (\vvar{y_1})t_1, \dots, (\vvar{y_l}) t_l, s_1, \dots, s_j \big) \quad .
\end{eqnarray*}
\end{itemize}

Note that we have in fact defined natural isomorphisms
$\nat_{(-)} \colon Id_\SOAT \To \M(\IL(-))$ and $\natbar_{(-)} \colon \M(\IL(-)) \To Id_\SOAT$
with components at a second-order algebraic theory $M \colon \M \To \Mlaw$ given respectively by the algebraic translations $\nat_M$ and $\natbar_M$ defined in the proof above. The proof of this naturality appears in Section ?, where functoriality of $\M (-)$ and $\IL(-)$ is established by defining syntactic translations of internal languages as the image of algebraic translations.



\section*{References}

\bibliographystyle{apalike}
\bibliography{thesisbib}







\end{document}